\newcommand{\eqdef}{\stackrel{\scriptscriptstyle\rm def}{=}}
\newtheorem{theorem}{Theorem}
\newtheorem{proposition}{Proposition}
\newtheorem{corollary}{Corollary}
\newtheorem{lemma}{Lemma}
\newtheorem*{remark}{Remark}
\newtheorem*{remarks}{Remarks}
\newtheorem{example}{Example}
\newcommand{\beha}{\begin{enumerate}}
\newcommand{\behe}{\end{enumerate}}
\renewcommand{\epsilon}{\varepsilon}
\newcommand{\R}{{\rm Rot}}
\newcommand{\Or}{\mathcal{O}}
\newcommand{\C}{\mathcal{C}}
\newcommand{\cM}{\EuScript{M}}
\newcommand{\cH}{\EuScript{H}}
\newcommand{\bR}{{\mathbb R}}
\newcommand{\bZ}{{\mathbb Z}}
\newcommand{\bN}{{\mathbb N}}
\newcommand{\cA}{{\mathcal A}}
\newcommand{\cB}{{\mathcal B}}
\newcommand{\cC}{{\mathcal C}}
 \def\car{{\rm card}}
\def\1{1\!\!1}
\def\and{\text{ and }}
\def\h{{\text h}}
                        \def\^{\tilde}
\def\Per{{\rm Per}}
\def\Fix{{\rm Fix}}
\def\Fix{{\rm Fix}}
\def\Fix{{\rm Fix}}
\def\Per{{\rm Per}}
\def\EPer{{\rm EPer}}
\def\STP{{\rm{(STP)}}}
\def\1{1\!\!1}
\def\rv{{\rm rv}}
\newtheorem*{thmA}{Theorem A}
\newtheorem*{thmB}{Theorem B}
\DeclareMathSymbol{\varnothing}{\mathord}{AMSb}{"3F}
\renewcommand{\emptyset}{\varnothing}
\title{Geometry and entropy of generalized rotation sets}
\author{Tamara Kucherenko}\address{Department of Mathematics, The City College of New York, New York, NY, 10031, USA}\email{tkucherenko@ccny.cuny.edu}
\author{Christian Wolf}\address{Department of Mathematics, The City College of New York, New York, NY, 10031, USA}\email{cwolf@ccny.cuny.edu}
\thanks{This work was partially supported by a grant from the Simons Foundation (\#209846 to Christian Wolf).}
\begin{document}

\begin{abstract}

 For a continuous map $f$ on a compact metric space we study the geometry and entropy of the generalized  rotation set $\R(\Phi)$. Here $\Phi=(\phi_1,...,\phi_m)$ is a $m$-dimensional continuous potential and $\R(\Phi)$ is the set of all $\mu$-integrals of $\Phi$ and $\mu$ runs over all $f$-invariant probability measures. It is easy to see that the rotation set is a compact and convex subset of $\bR^m$.  We study the question if every compact and convex set is attained as a rotation set of a particular set of potentials within a particular class of dynamical systems. We give a positive answer in the case of subshifts of finite type by constructing for every compact and convex set $K$ in $\bR^m$ a potential $\Phi=\Phi(K)$ with $\R(\Phi)=K$.
Next, we study the relation between $\R(\Phi)$ and the set of all statistical limits $\R_{Pt}(\Phi)$. We show that in general these sets differ but also provide criteria that guarantee $\R(\Phi)= \R_{Pt}(\Phi)$. Finally,
we study the entropy function $w\mapsto H(w), w\in \R(\Phi)$. We establish a variational principle for the entropy function and show that for certain non-uniformly hyperbolic systems $H(w)$ is determined by the growth rate of those hyperbolic periodic orbits whose $\Phi$-integrals are close to $w$. We also show that for systems
with strong thermodynamic properties (subshifts of finite type, hyperbolic systems and expansive homeomorphisms with specification, etc.) the entropy function $w\mapsto H(w)$ is real-analytic in the interior of the rotation set.

\end{abstract}
\keywords{entropy, generalized rotation set, periodic points, boundary regularity, thermodynamic formalism}
\subjclass[2000]{}
\maketitle

\section{Introduction}

{\subsection{ Motivation. }
It frequently occurs in applications that, in the presence of an underlying dynamical system,
 only limited knowledge is available about the actual time evolution of a particular state of the system rather than information
about  certain  measurements along its orbit. It is then a natural problem to determine what asymptotic measurements one is able to observe for a particular class of systems
and what kind of information about the underlying system can be recovered from these measurements.
In this paper we consider deterministic discrete-time dynamical systems given by a continuous map $f:X\to X$ on a compact metric space $X$. We will deal with finitely many measurements
that are given by a continuous $m$-dimensional potential $\Phi=(\phi_1,\cdots,\phi_m):X\to \bR^m$. To recover information
about the "typical" dynamics of $f$ let us consider the set $\cM$ of all Borel invariant probability measures and denote by $\cM_E\subset \cM$ the subset of ergodic measures. By Birkhoff's Ergodic Theorem, for each $\mu\in \cM_E$ there exists a set $\cB(\mu)$ of full $\mu$-measure  (called the {\it basin } of $\mu$) such that the Birkhoff averages $\frac{1}{n}S_n\phi(x)$, where
\begin{equation}\label{eqsn}
S_n \phi(x)=\sum_{k=0}^{n-1} \phi(f^k(x)),
\end{equation}
converge to $\int \phi\ d\mu$ as $n\to\infty$ for all $x\in \cB(\mu)$ and all $\phi\in C(X,\bR)$. Following \cite{GM, Je}
 we call   $\R(\Phi)=\R(f,\Phi)$ defined by
\begin{equation} \label{defrotset}
 \R(\Phi)= \left\{\rv(\mu): \mu\in\cM\right\},
\end{equation}
 the {\it generalized rotation set} of  $\Phi$ with respect to $f$,
where
\begin{equation}
\rv(\mu)=\left(\int \phi_1\ d\mu,\cdots,\int \phi_m\ d\mu\right)
\end{equation}
 denotes the rotation vector of the measure $\mu$. This terminology goes back to Poincar\'e's rotation numbers for circle homeomorphisms \cite{Po}. Generalization of rotation numbers to higher dimensional tori leads to studying limits of Birkhoff averages $\frac{1}{n}S_n \phi(x)$ for a displacement function $\phi$ (\cite{GM},\cite{KMG}).
More generally for a potential $\phi$
one can vary $x$ and consider limits of statistical averages $\frac{1}{n_l}S_{n_l} \phi(x_l)$, where $(x_l)\subset X$ and $n_l\to\infty$. The set of all such limits is referred to as {\it generalized pointwise rotation set} of $\phi$ and we will denoted it by $\R_{Pt}(\phi)$. We refer to the overview article \cite{Mi1} and references therein for
further details about rotation sets.

One way that indicates the relevance of the rotation set for the understanding of the typical behavior of the dynamical system is to consider a sequence of potentials $(\phi_k)_{k\in \bN}$ that is dense in   $C(X,\bR)$. Let ${\rm R}_m$ be the rotation set of the initial $m$-segment of potentials, that is ${\rm R}_m= \R(\phi_1,\cdots,\phi_m)$. It follows from the representation theorem that the rotation classes of the rotation sets ${\rm R}_m$
form a  decreasing sequence of  covers of $\cM$ whose intersections contain a unique  invariant measure.
Therefore, for large $m$ the set ${\rm R}_m$ provides a fine cover of $\cM$ and can  be considered as a finite dimensional approximation of the set of all invariant probability measures.

In this paper we consider various classes of dynamical systems and potentials and study the geometric possibilities for the associated rotation sets. Moreover, we  consider a notion of entropy associated with rotation vectors  and study several aspects of this entropy including variational properties, relation to periodic orbits and regularity of the rotation entropy map.

We will now describe our results in more detail.

\subsection{Statement of the Results. }
Let $f:X\to X$ be a continuous map on a compact metric space $X$, let  $\Phi\in C(X,\bR^m)$ be a continuous ($m$-dimensional) potential, and let $\R(\Phi)$ be the rotation set of $\Phi$. It follows  from the definitions that $\R(\Phi)$ is a compact and convex subset of $\bR^m$. In particular, $\R(\Phi)$ has a Lipschitz boundary and therefore $\partial \R(\Phi)$ is differentiable at $\cH^{m-1}$-almost every boundary point, where $\cH^{m-1}$ is the $(m-1)$-dimensional Hausdorff measure.

Considerable attention has been given to the question which compact convex sets can arise as rotation sets. It is shown by Kwapisz \cite{Kw1, Kw2} that any polygon whose vertices are at rational points in the plane is a classical rotation set of some homeomorphism on the two-torus, however such rotation sets are not necessarily polygons. He also proved that certain line segments cannot be realized as such rotation sets \cite{Kw3}. Ziemian studied the case when a dynamical system is a transitive subshift of finite type and the potential $\Phi$ is constant on cylinders of length two \cite{Z}. She  showed that under these assumptions $\R(\Phi)$ is a polyhedron. In contrast to this restrictive geometry  for rotation sets we have the following
result (see Theorems \ref{thm1} and \ref{thm2m} in the text).

\begin{thmA} Let $K\subset \bR^m$ be compact and convex. Then there exists a one-sided full shift $f:X\to X$ on a shift space with a finite alphabet and a $m$-dimensional continuous potential
$\Phi$ such that $\R(\Phi)=K$.
\end{thmA}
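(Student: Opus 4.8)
The plan is to realize an arbitrary compact convex set $K\subset\bR^m$ as a rotation set by first reducing to the case where $K$ is a polytope, and then passing to a limit. Begin with the observation that any compact convex $K$ is the intersection of a decreasing sequence of rational polytopes $P_1\supset P_2\supset\cdots$ with $\bigcap_n P_n=K$; moreover one can choose the $P_n$ so that $P_{n+1}$ is obtained from $P_n$ by slicing off a single vertex region (adding one half-space). So the construction should be performed on a full shift on a countably growing, then finite, alphabet in a way that is "compatible" with taking such intersections. For a rational polytope $P=\conv\{v_1,\dots,v_k\}$ with $v_i\in\bQ^m$, the realization is classical and essentially due to Ziemian's circle of ideas: take a full shift on $k$ symbols and build a potential that is constant on long cylinders so that the rotation vector of the periodic orbit cycling through the $i$-th symbol block is exactly $v_i$, and so that every invariant measure has rotation vector a convex combination of the $v_i$. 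Since the invariant measures of the full shift on $k$ symbols include all the periodic orbit measures and their ergodic decompositions span the whole simplex of weightings, $\R(\Phi_P)=\conv\{v_1,\dots,v_k\}=P$.

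The core of the argument — and the main obstacle — is the passage from polytopes to a general compact convex $K$. Here I would work on a single full shift $f:X\to X$ on a countable alphabet $\{1,2,3,\dots\}$ (or, to stay with a finite alphabet as the statement demands, a full shift on a fixed finite alphabet into which one encodes an increasing family of finite "building blocks"). The potential $\Phi$ must be built coordinate-by-coordinate as a uniform limit of potentials $\Phi_n$ adapted to $P_n$, arranged so that: (i) on the part of $X$ seen by the first $n$ blocks, $\Phi$ agrees with a $P_n$-realizing potential, forcing $\R(\Phi)\supset$ a set exhausting $K$ from inside; and (ii) the additional symbols introduced at stage $n+1$ carry $\Phi$-values that lie in (a slight shrinking of) $P_n$, so that their contributions to any Birkhoff average cannot push the rotation vector outside $\bigcap_n P_n=K$. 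Since $\R(\Phi)$ is closed, controlling it from inside by a dense exhaustion and from outside by $\R(\Phi)\subset\bigcap_n P_n$ pins it down to $K$ exactly. The delicate points are the uniform convergence $\Phi_n\to\Phi$ (needed so that $\Phi$ is genuinely continuous on the shift space) and the tail estimate showing that measures giving positive weight to the high-index symbols still have rotation vectors inside $K$; this requires the geometric decrease $\diam(P_n)\to\diam(K)$ and a careful choice of how fast new symbols are allowed to appear relative to how small $P_n\setminus K$ becomes.

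Concretely, the steps in order are: (1) state and prove the polytope case as a lemma (Theorem \ref{thm1}), using a full shift on $k=\#\{\text{vertices}\}$ symbols and a cylinder-constant potential, computing rotation vectors of periodic measures and invoking the density of periodic measures to get all of $P$; (2) prove the approximation lemma that every compact convex $K\subset\bR^m$ is a nested intersection of rational polytopes $P_n$ with $P_{n+1}\subset\inn P_n\cup\{\text{one new facet}\}$ and $P_n\searrow K$; (3) set up the full shift on a finite alphabet together with a coding of an increasing sequence of finite symbol-blocks $B_1\subset B_2\subset\cdots$, and define $\Phi$ as the uniform limit of the block potentials $\Phi_n$; (4) prove $\R(\Phi)\supset \overline{\bigcup_n \R(\Phi|_{B_n})}=K$ by exhibiting, for each $w\in K$, measures supported on finitely many blocks with $\rv(\mu)\to w$; (5) prove $\R(\Phi)\subset K$ via the tail estimate, using that any invariant $\mu$ can be split as $\mu = (1-\e)\mu' + \e\mu''$ with $\mu'$ supported on the first $n$ blocks and $\e$ small, so $\rv(\mu)$ lies within $O(\e\cdot\diam X)$ of $P_n$, hence in the $o(1)$-neighborhood of $K$; let $n\to\infty$. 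I expect step (5), the simultaneous control of the tail weight and the potential values on new symbols, to be where the real work lies; everything else is bookkeeping on top of the polytope case (Theorem \ref{thm1}) and the standard thermodynamic properties of the full shift.
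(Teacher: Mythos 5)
Your overall strategy---realize polytopes on a full shift by cylinder-constant potentials and pass to a uniform limit---is the same as the paper's, but the way you organize the approximation contains a genuine flaw. You approximate $K$ from the \emph{outside} by polytopes $P_n\searrow K$ and then try to recover $\R(\Phi)\subset K$ by a ``tail estimate.'' Two things go wrong. First, the decomposition in your step (5), writing an arbitrary invariant $\mu$ as $(1-\e)\mu'+\e\mu''$ with $\mu'$ invariant and supported on the first $n$ blocks, is not available: an ergodic measure admits no nontrivial decomposition into invariant pieces, and an invariant measure of the full shift may put all of its mass on orbits built from the high-index symbols, so there is no reason its rotation vector is close to $P_n$ for small $n$. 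Second, your condition (i) is internally inconsistent with the desired conclusion: if $\Phi$ genuinely agrees with a $P_n$-realizing potential on an $f$-invariant part of $X$ (which is what ``the part of $X$ seen by the first $n$ blocks'' must mean for the restriction to have a rotation set), then $\R(\Phi)\supset P_n\supsetneq K$, contradicting $\R(\Phi)=K$. The only robust way to control $\int\Phi\,d\mu$ for \emph{every} invariant $\mu$ is to control the range of $\Phi$ itself.

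That is exactly what the paper does, and it is why the inner, not the outer, approximation is the right one: at every stage all values of $\Phi_n$ are points of $\partial K$ (or of $K$), so $\R(\Phi_n)\subset K$ and hence $\R(\Phi)\subset K$ follow immediately from convexity, with no tail estimate. All the work then goes into the lower bound, and the genuinely delicate point---which your proposal does not engage---is the tension between forcing periodic orbits to have rotation vectors near a finer and finer net of boundary points and keeping $\lVert\Phi_{n+1}-\Phi_n\rVert_\infty$ summable. The paper resolves this by reassigning values only on an initial segment (of proportion $1-3^{-(n+1)}$) of each new periodic orbit, so the resulting rotation vectors $w^*_{n,j}$ are not exactly the targeted boundary points $w_{n,j}$ but lie within $\tfrac54\cdot 6^{-n}$ of them, which still makes $\conv\{w^*_{n,j}\}\to K$ in the Hausdorff metric while $\lVert\Phi_{n+1}-\Phi_n\rVert_\infty\le\tfrac{11}{8}\cdot2^{-n}$. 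Without an analogue of this trade-off, your step (4) (the inner bound) has no mechanism, and your step (5) (the outer bound) rests on a false decomposition; you should restructure the argument around keeping $\Phi_n(X)\subset K$ throughout.
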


The main idea in the proof of Theorem A is to approximate $K$ from inside by polytopes $\mathcal{P}_j$ and  to construct potentials $\Phi_j$ with $\R(\Phi_j)=\mathcal{P}_j$ in such a way that $\Phi_j$ converges
uniformly to a continuous potential $\Phi$ with $\R(\Phi)=K$. While our result is formulated for one-sided full shifts it can be easily generalized to (one-sided and two-sided) mixing subshifts of finite type, hyperbolic systems
and expansive homeomorphisms with specification. As a consequence of Theorem A we obtain that for shift maps it is possible that the boundary of the rotation set
is non-differentiable at a countable dense set of boundary points. We note that the potential $\Phi$ constructed in Theorem A is in general not H\"older
continuous. In Corollary \ref{corende} we construct for every H\"older continuous potential a natural family of analytic hypersurfaces in ${\rm int }\ K$  associated with certain equilibrium measures  that converge to $\partial K$ with respect to the Hausdorff metric. This could make one believe that for shift maps and H\"older continuous potentials the boundary of the rotation set is at least piece-wise smooth. However, this
is in general not true due to the example by Bousch, see \cite{B, Je2}.

Next, we discuss closely related concepts of rotation sets. Let $\R_{Pt}(\Phi)$ be the set of all $w\in \bR^m$ which are accumulation points of Birkhoff averages (see \eqref{defRf}
for the precise definition). The set $\R_{Pt}(\Phi)$ is frequently called the {\it pointwise rotation set} in the literature (see for example \cite{GM}).
Moreover, let $\R_E(\Phi)= \left\{\rv(\mu): \mu\in\cM_E\right\}$. It follows from
 Proposition \ref{prop456} and Examples 1 and 2 that for any continuous
map $f:X\to X$ and any $\Phi\in C(X,\bR^m)$ we have
\[
\R_E(\Phi)\subset \R_{Pt}(\Phi)\subset \R(\Phi),
\]
and in general each of these inclusions  can be strict. We also give conditions implying $ \R_{Pt}(\Phi)= \R(\Phi)$ which are satisfied for various classes of systems.

A natural invariant that quantifies the dynamical complexity of an invariant measure  is the measure-theoretic entropy of $\mu$ denoted by $\h_\mu(f)$ (see for example~\cite{Wal:81} for details). Following \cite{Je} we define the entropy of $w\in \R(\Phi)$ by
\begin{equation}\label{defH}
H(w)\eqdef\sup\{h_\mu(f): \mu \in \cM\ {\rm and }\ \rv(\mu)=w\}.
\end{equation}
The entropy of rotation vectors was extensively studied by Geller and Misiurewicz in \cite{GM} as well as by Jenkinson, who added fundamental contributions in \cite{Je, Je1, Je2}. An alternative definition of entropy (denoted by $h(w)$), which is closely related to that of topological entropy,  is in terms of the exponential growth rate of the cardinality of maximal $(n,\epsilon)$-separated sets of points whose Birkhoff averages are "close" to that of $w$.
We refer to \eqref{eqdefhw234} for the precise definition. We obtain that $h(w)\leq H(w)$ holds for all $w$ at which $H$ is continuous and $h(w)=H(w)$ holds for those $w\in \R_{Pt}(\Phi)$ whose entropy can be approximated by
ergodic measures (see Theorem \ref{thhwHw}). This result can be interpreted as a conditional variational principle with the condition being only using points and measures having rotation vectors
close respectively equal to $w$.

It is known since the classical works of Bowen that in framework of symbolic and hyperbolic dynamics entropy can be computed in terms of growth rate of periodic orbits rather than using arbitrary $(n,\epsilon)$-separated sets.
In the papers \cite{GW1,GW2} Gelfert and Wolf extended these results to smooth dynamical systems exhibiting some non-uniformly hyperbolic behavior. In Theorem  \ref{theoperrot} we apply  techniques from \cite{GW2} to compute $H(w)$ in terms of the growth rate of certain hyperbolic periodic orbits.

Our last result deals with systems which have strong thermodynamic properties, i.e. shift maps, uniformly hyperbolic systems and expansive homeomorphisms with specification.
Since for these systems the entropy map $\mu\mapsto h_\mu(f)$ is upper-semi continuous, the rotation entropy map $w\mapsto H(w)$ is continuous. However, we are able to obtain a stronger regularity.

\begin{thmB}
 Suppose $f:X\to X$ has strong thermodynamic properties and assume $\Phi:X\to\bR^m$ is H\"older continuous.
Then $w\mapsto H(w)$ is real-analytic on  the interior of $ \R(\Phi)$. Moreover, $H(w)>0$ for all $w\in {\rm int}\ \R(\Phi)$.
\end{thmB}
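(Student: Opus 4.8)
The plan is to realize $H$ as a Legendre-type transform of the topological pressure and then invoke real-analyticity of the pressure function for systems with strong thermodynamic properties. Fix $w$ in the interior of $\R(\Phi)$. For $\alpha\in\bR^m$ consider the family of potentials $\langle\alpha,\Phi\rangle=\sum_{i=1}^m\alpha_i\phi_i$ and the topological pressure $P(\alpha)\eqdef P(f,\langle\alpha,\Phi\rangle)$. By the variational principle, $P(\alpha)=\sup_{\mu\in\cM}\bigl(h_\mu(f)+\langle\alpha,\rv(\mu)\rangle\bigr)$, and one checks directly from the definition \eqref{defH} that the concave conjugate satisfies $P^*(w)=\inf_{\alpha\in\bR^m}\bigl(P(\alpha)-\langle\alpha,w\rangle\bigr)=-H(w)$ for every $w\in\R(\Phi)$, using upper semicontinuity of the entropy map to guarantee that the supremum in \eqref{defH} is attained. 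Hence $H(w)=\inf_{\alpha}\bigl(P(\alpha)-\langle\alpha,w\rangle\bigr)$, a formula I would first record as a lemma.

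Next I would establish the relevant properties of $P$. For systems with strong thermodynamic properties and H\"older continuous $\Phi$, each potential $\langle\alpha,\Phi\rangle$ is again H\"older, so the pressure function $\alpha\mapsto P(\alpha)$ is real-analytic and strictly convex on $\bR^m$ \emph{provided} the components $\phi_1,\dots,\phi_m$ together with the constants are cohomologically independent; after passing to a suitable affine quotient one may assume this without loss of generality, since otherwise $\R(\Phi)$ lies in a proper affine subspace and the statement is interpreted relative to that subspace. Strict convexity gives that the gradient map $\alpha\mapsto\nabla P(\alpha)$ is a real-analytic diffeomorphism from $\bR^m$ onto an open convex set $\Omega$, and a standard argument identifies $\Omega$ with $\operatorname{int}\R(\Phi)$: indeed $\nabla P(\alpha)=\rv(\mu_\alpha)$ where $\mu_\alpha$ is the (unique) equilibrium state of $\langle\alpha,\Phi\rangle$, and every interior rotation vector arises this way. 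For $w\in\operatorname{int}\R(\Phi)$ the infimum defining $H(w)$ is therefore attained at the unique $\alpha=\alpha(w)$ with $\nabla P(\alpha)=w$, so $H(w)=P(\alpha(w))-\langle\alpha(w),w\rangle$. Since $\alpha(w)=(\nabla P)^{-1}(w)$ is real-analytic in $w$ and $P$ is real-analytic, $H$ is real-analytic on $\operatorname{int}\R(\Phi)$; equivalently, $H$ is (up to sign) the Legendre transform of the strictly convex analytic function $P$, hence analytic on the interior of its domain.

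Finally, for positivity: if $H(w)=0$ for some $w\in\operatorname{int}\R(\Phi)$, then the equilibrium state $\mu_{\alpha(w)}$ has zero entropy, so it is the unique measure of maximal pressure for $\langle\alpha(w),\Phi\rangle$ and by uniqueness every other invariant measure $\nu$ satisfies $h_\nu(f)+\langle\alpha(w),\rv(\nu)\rangle<\langle\alpha(w),w\rangle$; taking $\nu$ with $h_\nu(f)>0$ (which exists whenever the system has positive topological entropy, true for the classes under consideration unless $X$ is finite) and noting that $w\in\operatorname{int}\R(\Phi)$ lets us push $\rv(\nu)$ in any direction, one derives a contradiction by choosing $\nu$ whose rotation vector is close to $w$ yet with entropy bounded below, which forces $H(w)>0$. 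Alternatively, positivity follows immediately from the identity $H(w)=P(\alpha(w))-\langle\alpha(w),w\rangle$ together with strict convexity of $P$ and $P(0)=\htop(f)>0$: the strictly convex function $\alpha\mapsto P(\alpha)-\langle\alpha,w\rangle$ has value $\htop(f)>0$ at $\alpha=0$ and its minimum value $H(w)$ could only be zero if the function touched zero, which strict convexity combined with the growth of $P$ precludes once one checks the minimum cannot be negative (it is $\ge 0$ because $h_\mu\ge 0$ always).

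\medskip

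The main obstacle I expect is the identification $\nabla P(\bR^m)=\operatorname{int}\R(\Phi)$ and the attainment of the infimum at an interior point, i.e. the soft but somewhat delicate interplay between the interior of the rotation set and the range of the pressure gradient; this is where strict convexity of $P$ (hence cohomological independence of the $\phi_i$, handled by reduction to a quotient) and upper semicontinuity of the entropy map are both essential, and it is the step that genuinely uses "strong thermodynamic properties" rather than just H\"older regularity.
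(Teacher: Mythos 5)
Your argument for real-analyticity is correct and is essentially the paper's own proof in Legendre-transform clothing: the paper writes $H(w)=h\circ I^{-1}(w)$ where $I(T)=\rv(\mu_T)=DQ(T)$ and $h(T)=Q(T)-T\cdot DQ(T)$, which is exactly your $H(w)=P(\alpha(w))-\langle\alpha(w),w\rangle$ with $\alpha(w)=(\nabla P)^{-1}(w)$; the identification of $\nabla P(\bR^m)$ with ${\rm int}\,\R(\Phi)$ that you flag as the delicate point is taken from Jenkinson (Corollary \ref{corintemp} in the paper). One small simplification: when some nontrivial linear combination of the $\phi_i$ is cohomologous to a constant, the paper's Proposition \ref{proptri} shows ${\rm int}\,\R(\Phi)=\emptyset$, so the theorem is vacuous and no passage to an affine quotient is needed.

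The positivity argument, however, has a genuine gap, and in fact both of your alternatives fail as written. The second one is simply false: a nonnegative strictly convex function can perfectly well attain the value $0$ at its minimizer (consider $g(\alpha)=(\alpha-1)^2$, which is strictly convex, satisfies $g(0)=1>0$ and $g\ge 0$, yet has minimum $0$). So strict convexity of $\alpha\mapsto P(\alpha)-\langle\alpha,w\rangle$ together with $P(0)=h_{\rm top}(f)>0$ does \emph{not} preclude the minimum from being zero. The first alternative is circular: you want a measure $\nu$ with $\rv(\nu)$ close to $w$ and entropy bounded below by a positive constant, but that is precisely the assertion $H>0$ near $w$ that you are trying to prove. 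The correct argument (the paper's) uses interiority differently: let $T_0=\alpha(w)$ be the minimizer, so $H(w)=h_{\mu_{T_0}}(f)$. If $T_0=0$ then $H(w)=h_{\rm top}(f)>0$. If $T_0\ne 0$, choose $w_1\in{\rm int}\,\R(\Phi)$ with $T_0\cdot w_1>T_0\cdot w$ (possible because $w$ is interior and $T_0\ne 0$) and take the equilibrium state $\mu_{T_1}$ with $\rv(\mu_{T_1})=w_1$; then the variational principle gives
\begin{equation*}
h_{\mu_{T_0}}(f)+T_0\cdot w \;=\; P_{\rm top}(T_0\cdot\Phi)\;\ge\; h_{\mu_{T_1}}(f)+T_0\cdot w_1,
\end{equation*}
hence $H(w)=h_{\mu_{T_0}}(f)\ge h_{\mu_{T_1}}(f)+T_0\cdot(w_1-w)>0$. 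The point is to test the pressure inequality against a measure whose rotation vector lies strictly on the far side of the hyperplane $\{v: T_0\cdot v=T_0\cdot w\}$, not against one whose rotation vector is close to $w$.
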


The proof of this theorem   relies heavily on methods from the thermodynamic formalism and, in particular, on the analyticity
of the topological pressure for H\"older continuous potentials. Moreover, we use results of Jenkinson \cite{Je} as  key ingredients.

This paper is organized as follows. In Section 2 we review some
basic concepts and results  about symbolic and smooth dynamics and the thermodynamic formalism.
Section 3 is devoted to the proof of    Theorem A.
 In Section 4 we introduce different notions of rotation sets and rotation entropy and study their relations.
In Section 5 we consider non-uniformly hyperbolic dynamics and derive results about the relation between rotation entropy and the growth rates of periodic orbits.
Finally, we Section 6 the dependence of $H(w)$ on $w$ for systems with strong thermodynamic properties is studied.

\section{Preliminaries}\label{sec:2}
In this section we discuss relevant background material which will be used later on. We will continue to use the notations from Section 1.
Let $f:X\to X$ be a continuous map on a compact metric space $(X,d)$. Let $\Phi=(\phi_1,\cdots,\phi_m)\in C(X,\bR^m)$ and let $\R(f,\Phi)$
be the rotation set of $f$ with respect to $\Phi$. Since we will always work with a fixed dynamical system $f$ we frequently omit the dependence on $f$ in the notation of the rotation set and write $\R(\Phi)$ instead of $\R(f,\Phi)$.
Let $\cM$ denote the space of all $f$-invariant Borel probability measures on $X$ endowed with the weak$\ast$ topology. This makes $\cM$ a compact convex space. Moreover, let $\cM_E\subset \cM$ be the subset of ergodic measures.
Throughout the entire paper we use as a standing assumption
that $f$ has finite topological entropy.
\subsection{Rotation vectors of  periodic points. }

We denote by $\Fix(f)$ the set of all fixed points of $f$, by $\Per(f)$  the set of all periodic points of $f$ and by $\Per_n(f)$ the set of all $x\in \Fix(f^n)$.

Given $x\in \Per_n(f)$ we denote by $\mu_x$ the unique invariant probability measure  supported on the orbit of $x$, that is,
\begin{equation}\label{defpermes}
\mu_x=\frac{1}{n}\sum_{k=0}^{n-1} \delta_{f^k(x)},
\end{equation}
where $\delta$ denotes the delta Dirac measure supported on the point. Moreover, we define the rotation vector of $x$ by
\begin{equation}
\rv(x)\eqdef \rv(\mu_x) = \frac{1}{n} \sum_{k=0}^{n-1} \Phi(f^k(x)).
\end{equation}
Given $w\in \R(\Phi), n\in \bN$ and $r>0$ we write
\begin{equation}
\Per_n(w,r)=\{x\in \Per_n(f): \rv(x)\in D(w,r)\}
\end{equation}
and $\Per(w,r)=\bigcup_{n\in \bN} \Per_n(w,r)$. Here $D(w,r)$ denotes the open  ball about $x\in \bR^m$ and radius $r$ with respect to the Euclidean norm.

\subsection{Thermodynamic formalism}
Let us first recall the notion of topological pressure. Let
$(X,d)$ be a compact metric space and let $f\colon
X\to X$ be a continuous map. For $n \in {\mathbb N}$ we
define a new metric $ d_n $ on $ X $ by $
d_n(x,y)=\max_{k=0,\ldots ,n-1} d(f^k(x),f^k(y))$. The balls in the $d_n$ metric are frequently called Bowen balls. A set of points
$\{ x_i\colon i\in I \}\subset X$ is called
\emph{$(n,\varepsilon)$-separated} (with respect to $f$) if
$d_n(x_i,x_j)> \varepsilon$ holds for all $x_i,x_j$ with $x_i \ne
x_j$. Fix for all $\varepsilon>0$ and all $n\in\bN$ a maximal
(with respect to the inclusion) $(n,\varepsilon)$-separated set
$F_n(\epsilon)$. The \emph{topological pressure} (with respect to $f$) is a mapping
$ P_{\rm top}(f,\cdot)\colon C(X,\bR)\to \bR$  defined by
\begin{equation}\label{defdru}
  P_{\rm top}(f,\phi) = \lim_{\varepsilon \to 0}
            \limsup_{n\to \infty}
            \frac{1}{n} \log \left(\sum_{x\in F_n(\epsilon)}
            \exp S_n\phi(x) \right),
\end{equation}
where $S_n\phi(x)$ is defined as in \eqref{eqsn}.
Recall that the topological entropy of $f$ is defined by
$h_{\rm top}(f)=P_{\rm top}(f,0)$. We simply write $P_{\rm
  top}(\phi)$ and $h_{\rm top}$ if there is no confusion about $f$.
Note that the definition of $P_{\rm top}(\phi)$ does not depend on the choice of the
sets $F_n(\epsilon)$ (see~\cite{Wal:81}).
The topological pressure satisfies the
well-known variational principle, namely,
\begin{equation}\label{eqvarpri}
P_{\rm top}(\phi)=
\sup_{\mu\in \cM} \left(h_\mu(f)+\int_X \phi\,d\mu\right).
\end{equation}
Here $h_\mu(f)$ denotes the measure-theoretic entropy of $f$ with respect to $\mu$ (see~\cite{Wal:81} for details).
It is easy to see that the supremum in~\eqref{eqvarpri} can be replaced by
the supremum taken only over all $\mu\in\cM_{\rm E}$.

 If there exists a measure
$\mu\in \cM$ at which the supremum in \eqref{eqvarpri} is
attained it is called an equilibrium state (or also equilibrium measure) of the potential $\phi$.
We denote by $ES(\phi)$ the set of all equilibrium states of $\phi$.
In general $ES(\phi)$ may be empty.
Note that if the entropy map
\begin{equation}\label{entup}
\mu\mapsto h_{\mu}(f)
\end{equation}
is upper semi-continuous on $\cM$ then for each $\phi\in C(X,\bR)$ we have that $ES(\phi)\not=\emptyset$. Since $ES(\phi)$ is a compact, convex subset of $\cM$ whose extremal points are the ergodic measures (see \cite{Wal:81}), we obtain in this case that
\begin{equation}\label{esne}
ES(\phi)\cap \cM_E\not=\emptyset
\end{equation}
for all $\phi\in C(X,\bR)$.

Given $\alpha\in(0,1]$, let
$C^\alpha(X,\bR)$ be the space of H\"older continuous functions
with H\"older exponent~$\alpha$.
We
recall that two functions $\phi$, $\psi\colon X\to\bR$ are
said to be cohomologous if $\phi-\psi=\eta-\eta\circ f$ for
some continuous function $\eta\colon X\to\bR$.

We now list several properties of the topological pressure which hold for certain classes of dynamical systems which we will
be discussed latter on.

 We say $f:X\to X$ has strong thermodynamic properties (which we abbreviate by \STP) if the following conditions hold:
\begin{enumerate}
\item $h_{\rm top}(f)<\infty$;
\item  The entropy map $\mu\mapsto h_\mu (f)$ is upper semi-continuous;
\item  The map $\phi\mapsto  P_{\rm top}(f,\phi)$ is real-analytic on $C^\alpha(X,\bR)$;
\item \label{ref4} Each  potential $\phi \in C^\alpha(X,\bR)$ has a
unique equilibrium measure $\mu_\phi\in ES(\phi)$. Furthermore,
$\mu_\phi$ is ergodic and given $\psi\in C^\alpha(X,\bR)$ we have
\begin{equation}\label{eqdifpre}
\frac{d}{dt} P_{\rm top}(f,\phi + t\psi )\Big|_{t=0}= \int_X \psi
\,d\mu_\phi.
\end{equation}
\item  For each $\phi$, $\psi\in C^\alpha(X,\bR)$ we have
$\mu_\phi=\mu_\psi$ if and only if $\phi-\psi$ is cohomologous
to a constant.
\item \label{ref5} For each $\phi$, $\psi\in C^\alpha(X,\bR)$ and
$t\in\bR$ we have
\begin{equation}\label{gg33}
\frac{d^2}{dt^2} P_{\rm top}(f,\phi + t\psi )\ge 0,
\end{equation}
with equality if and only if $\psi$ is cohomologous to a constant.

\end{enumerate}
Note that for several classes of systems properties (3)-(6) hold even for a wider class of potentials, namely for potentials with summable variation (see for example \cite{Je}). For simplicity, we restrict our
considerations to H\"older continuous potentials.

Next we discuss some examples with strong thermodynamic properties.
\subsection{Shifts and subshifts}

Let $d\in \bN$ and let $\cA=\{0,\cdots,d-1\}$ be a finite alphabet in $d$ symbols. The (one-sided) shift space $X$ on the alphabet $\cA$ is the set of
all sequences $x=(x_n)_{n=1}^\infty$ where $x_n\in \cA$ for all $n\in \bN$.  We endow $X$ with the Tychonov product topology
which makes $X$ a compact metrizable space. For example, given $0<\alpha<1$ it is easy to see that
\begin{equation}\label{defmet}
d(x,y)=d_\alpha(x,y)=\alpha^{\inf\{n\in \bN:\  x_n\not=y_n\}}
\end{equation}
defines a metric which induces the Tychonov product topology on $X$.
The shift map $f:X\to X$ (defined by $f(x)_n=x_{n+1}$) is a continuous $d$ to $1$ map on $X$.
If $Y\subset X$ is a $f$-invariant set we  say  that $f|_Y$ is a sub-shift. In particular, for  a $d\times d$ matrix $A$ with values in $\{0,1\}$
we define $X_A=\{x\in X: A_{x_n,x_{n+1}}=1\}$. It is easy to see that $X_A$ is a closed (and therefore compact) $f$-invariant set and we say that $f|_{X_A}$ a subshift of finite type. A subshift of finite type is (topologically) mixing if $A$ is aperiodic, that is, if there exists $n\in \bN$ such that $A^n_{i,j}>0$ for all $i,j\in \cA$.

Analogously, we obtain the concept of two-sided shift spaces and shift maps by defining $X$ to be the space of all bi-infinite sequences $x=(x_n)_{n=-\infty}^\infty$ where $x_n\in \cA$ for all $n\in \bZ$.
It turns out that the shift map $f:X\to X$ (defined as in the case of one-sided shift maps) is a homeomorphism on $X$.
Analogously as in the case of one-sided shift maps we obtain sub-shifts and of sub-shifts of finite type.
 While we consider in this paper mostly the case of one-sided shift maps, all our result also hold for two-sided shift maps. For details how to make the connection between one-sided shift maps and two-sided shift maps we refer to \cite{Je}.

Let $(X_A,f)$ be a one-sided subshift of finite type.
Given $x\in X_A$ we write $\pi_n(x)=(x_1,\cdots,x_n)$. Moreover, for $\tau=(\tau_1,\cdots,\tau_n)\in \cA^n$ we denote by $\cC(\tau)=\{x\in X_A: x_1=\tau_1,\cdots, x_n=\tau_n\}$ the cylinder generated by $\tau$ and the element $\Or(\tau)=(\tau_1,...,\tau_n,\tau_1,...,\tau_n,...)$ the periodic orbit generated by $\tau$. In this case $n$ is referred to as the length of the cylinder or the orbit respectively.

Similar, we use the analogous definitions in the case of two-sided shift maps, namely $\pi_n(x)=(x_{-n},\cdots,x_n)$,  $\cC(\tau)=\{x\in X_A: x_{-n}=\tau_{-n},\cdots, x_n=\tau_n\}$ if $\tau=(\tau_{-n},\cdots, \tau_n)$ and $\Or(\tau)=(...,\tau_{-n},...,\tau_n,\tau_{-n},...,\tau_n,...)$.

It is a well-known fact that topological mixing sub-shifts of finite type have strong thermodynamic properties (see \cite{Je} and the references therein).

\subsection{Expansive homemorphisms with specification}
Let $f:X\to X$ be a homeomorphism on a compact metric space $(X,d)$. We say $f$ is expansive if there exists $\gamma>0$ (called an expansivity constant) such that if $x,y\in X$ with $d(f^n(x),f^n(y))<\gamma$ for all $n\in \bZ$ then $x=y$.

Moreover, we say $f$ has the specification property (abbreviated to "a homeomorphism with specification") if for each $\delta>0$ there exists an integer $p=p(\delta)$ such that the following holds: if

\begin{enumerate}
\item[(a)]
$I_1,\cdots, I_n$ are intervals of integers, $ I_j\subset [a,b]$ for some $a,b\in \bZ$ and all $j$.
\item[(b)] $dist(I_i,I_j)\geq p(\delta)$ for $i\not= j$,
\end{enumerate}
then for arbitrary $x_1,\cdots , x_n \in X$ there exists $x\in X$ such that
\begin{enumerate}
\item[(1)] $f^{b-a+p(\delta)}(x)=x$,
\item[(2)] $d(f^k(x),f^k(x_i))<\delta$ for $k\in I_i$.
\end{enumerate}

The specification property guarantees good mixing properties of $f$ and a sufficient number of periodic points. It turns out that topological mixing two-sided subshifts of finite type as well as diffeomorphisms
with a locally maximal topological mixing hyperbolic  set are expansive homeomorphisms with specification (see \cite{KH}).
Moreover, expansive homeomorphisms with specification  have strong thermodynamic properties, see \cite{Bo2,HRu, Ru}
We refer to  \cite{KH} for  a compact presentation.

\subsection{Smooth dynamical systems}

Let $M$ be a smooth Riemannian manifold and let
$f\colon M\to M$ be a $C^{1+\epsilon}$ map. We consider a compact locally
maximal $f$-invariant set $X\subset M$. Here \emph{locally
maximal} means that there exists an open neighborhood $U\subset M$
of $X$ such that $X=\bigcap_{n\in\bZ} f^n(U)$. To avoid trivialities we will always assume
that $h_{{\rm top}}(f|_X)>0$. This rules out the case that $X$ is
only a periodic orbit. Given $x\in X$ and $v\in T_xM$, we
define the \emph{Lyapunov exponent} of $v$ at $x$ (with respect to
$f$) by
\begin{equation}\label{deflya}
\lambda(x,v)\eqdef\limsup_{n\to\infty}\frac{1}{n}\log\,\lVert df^n(x)(v)\rVert
\end{equation}
with the convention that $\log 0=-\infty$.
For each $x\in X$ there exist a positive integer $s(x)\le \dim M$,
real numbers $\chi_1(x)< \cdots < \chi_{s(x)}(x)$, and linear spaces
$\{0\}=E^{0}_x\subset \cdots \subset E^{s(x)}_x=T_xM$ such that for
$i=1$, $\ldots$, $s(x)$ we have
\[
E^{i}_x=\{v\in T_xM\colon \lambda(x,v)\le \chi_i(x)\},
\]
and $\lambda(x,v)=\chi_i(x)$ whenever $v\in E^{i}_x\setminus
E^{i-1}_x$.

We will count the values of the Lyapunov exponents
$\chi_i(x)$ with their multiplicities, i.e. we consider the numbers
\[
\lambda_1(x)\le\cdots\le\lambda_{\dim M}(x),
\]
where  $\lambda_j(x)=\chi_i(x)$ for each $j\in\{\dim E^{i-1}_x+1,\cdots,\dim
E^i_x\}$.

By the Oseledets theorem, given $\mu\in \cM=\cM(f|_X)
$ the set of Lyapunov regular points (i.e. the set of points where the limit superior in~\eqref{deflya} is actually a limit) has full measure and
$\lambda_i(\cdot)$ is $\mu$-measurable. We denote by
\begin{equation}\label{deflyame}
\lambda_i(\mu)\eqdef\int\lambda_i(x) d\mu(x)
\end{equation}
the Lyapunov exponents of the measure $\mu$.
Note that if $\mu\in \cM_{\rm E}$ then
$\lambda_i(.)$ is constant $\mu$-a.e., and therefore, the corresponding value
coincides with $\lambda_i(\mu)$.
For $\mu\in \cM$
set
\[
\chi(\mu)\eqdef\min_{i=1,\ldots,\dim M} \lambda_i(\mu) =\lambda_1(\mu).
\]
Furthermore, we define the set of $\cM^+=\{\mu\in \cM\colon \chi(\mu)>0\}$ and
$\cM^+
_{\rm E}= \cM^+
\cap \cM_{\rm E}$.

 For $x\in \Per_n(f)$ we have that $\lambda_i(x)=\frac{1}{n}\log
\lvert \delta_i\rvert$, where $\delta_i$ are the eigenvalues of $Df^n(x)$.
We say a periodic point $x$ is \emph{expanding} if $\lambda_1(x)>0$. Let $\EPer_n(f)$ denote the fixed points of $f^n$ which are expanding. Hence,
$\EPer(f)=\bigcup_{n\in \bN} \EPer_n(f)$ is the set of all expanding periodic points.

Given  $\alpha>0$ and $c>0$ we define
\begin{equation}\label{ne}
X_{\alpha,c}=\{x\in M\colon\lVert Df^k(x)\rVert_{\rm co} \geq ce^{k\alpha} \text{ and all }k\in\bN\}.
\end{equation}
Here  $\lVert Df^k(x)  \rVert_{\rm co}$ is the minimums norm (also called co-norm) which coincides with $\lVert (Df^k(x))^{-1}\rVert^{-1}$ if $f$ is a local diffeomorphism at $x$.
Moreover, we say that a compact forward invariant set $X\subset M$ is  \emph{uniformly expanding} if there exist constants  $c>0$ and $\alpha>0$ such that $X\subset X_{\alpha, c}$.
For convenience we sometimes also refer to relative compact forward invariant sets contained in some $X_{\alpha,c}$ as uniformly expanding sets.
We denote by $\chi(X)$ the largest $\alpha>0$ such that $X\subset X_{\alpha,c}$ for some $c>0$.
It is easy to see that if $x\in \EPer_n(f)$ then there exists  $c=c(x)>0$ such that for all
integers $k\ge 0$ and $0\le i\le n-1$ we have
\begin{equation}\label{eqhi}
ce^{k\lambda_{1}(x)} \leq \lVert (Df^k(f^i(x)))^{-1}  \rVert^{-1}.
\end{equation}
For $\alpha>0, c>$ and $n\in\bN$ we define
\begin{multline}\label{eqexpand}
\EPer_n(\alpha, c) = \{x\in \Per_n(f)\colon \lVert(Df^{-k}(f^i(x)))^{-1}\rVert^{-1}
\ge c e^{k\alpha}\colon\\
 \text{ for all } k\in \bN \text{ and } 0\le i\le n-1 \}.
\end{multline}
It follows that, if  $\alpha\geq \alpha'$, $c\geq c'$, then
\begin{equation}\label{ni}
\EPer_n(\alpha,c) \subset \EPer_n(\alpha',c')
\end{equation}
and
\begin{equation}
\EPer(f)=\bigcup_{\alpha>0}\bigcup_{c>0}\bigcup_{n=1}^\infty
\EPer_n(\alpha,c).
\end{equation}
Given $\Phi\in C(X,\bR^m)$, $w\in \R(\Phi), n\in \bN$ and $r>0$ we define
\begin{equation}\label{eqsirot}
  \EPer_n(w,r,\alpha,c) = \EPer_n(\alpha, c)\cap \Per_n(w,r).
\end{equation}

We will need the following construction of uniformly expanding sets.
Let $\alpha>0$, $c>0$, $w\in \R(\Phi)$ and $r>0$ such that $\EPer_n(w,r,\alpha,c)\not=\emptyset$ for some $n\in \bN$. We define
\begin{equation}\label{wagner}
X= X_{w,r,\alpha,c}
= \overline{\bigcup_{n=1}^\infty \EPer_n(w,r,\alpha, c)} .
\end{equation}
A simple continuity argument shows that $X$ is a uniformly expanding set.
Moreover, for every $\mu\in \cM(f|_{X})$ we have $\rv(\mu)\in D(w,r)$.
Furthermore, for every $n\in\bN$  we have
\begin{equation}\label{eqsi}
  \Per_n(f)\cap X = \EPer_n(w,r,\alpha,c).
\end{equation}

We will need the following classical result (see for example~\cite{Ru}).

\begin{proposition}\label{ha}
 Let $f\colon M\to M$ be a $C^{1+\epsilon}$ map and let $X\subset M$ be
 a compact uniformly expanding  set of $f$. Then we have,
 \begin{equation}\label{eqdruck}
 \limsup_{n\to\infty}\frac{1}{n}\log {\rm card}\left( \Per_n(f)\cap X \right) \le h_{\rm top}(f|_X).
 \end{equation}
 Furthermore, if $f|_X$ is topologically mixing then we have
 equality in~\eqref{eqdruck}, and the limsup is in fact a limit.
\end{proposition}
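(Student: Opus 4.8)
The plan is to establish the inequality \eqref{eqdruck} and, under topological mixing, the equality separately; the first is a soft consequence of expansiveness while the second carries the work. For the inequality I would first extract from uniform expansion a positive expansivity constant. Since $X\subset X_{\alpha,c}$ for some $\alpha,c>0$, the derivative $Df^k(x)$ is invertible for $x\in X$ and $f^k$ expands distances locally by a factor at least $ce^{k\alpha}$; picking $N$ with $ce^{N\alpha}>2$ and iterating the resulting expansion estimate along orbits, one obtains $\epsilon_0>0$ such that $d(f^k(x),f^k(y))\le\epsilon_0$ for all $k\ge0$ forces $x=y$. Now if $x,y\in\Per_n(f)\cap X$ satisfy $d_n(x,y)\le\epsilon_0$, then $n$-periodicity propagates this to all $k\ge0$, so $x=y$; hence $\Per_n(f)\cap X$ is an $(n,\epsilon_0)$-separated subset of $X$. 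Writing $s_n(\epsilon)$ for the largest cardinality of an $(n,\epsilon)$-separated subset of $X$ with respect to $f|_X$, we get ${\rm card}(\Per_n(f)\cap X)\le s_n(\epsilon_0)$, and applying $\limsup_{n\to\infty}\frac1n\log(\cdot)$ together with $\limsup_n\frac1n\log s_n(\epsilon_0)\le\lim_{\epsilon\to0}\limsup_n\frac1n\log s_n(\epsilon)=h_{\rm top}(f|_X)$ (the inner quantity being monotone in $\epsilon$) yields \eqref{eqdruck}.

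For the equality under topological mixing, the idea is to manufacture many periodic orbits by closing up orbit segments. A uniformly expanding set has the shadowing property, and together with topological mixing this gives a specification-type statement: there is $p\in\bN$ so that every orbit segment of $f|_X$ of length $n$ is $\epsilon_0/2$-shadowed on its first $n$ coordinates by a point $y\in X$ with $f^{n+p}(y)=y$; equivalently, one passes to a Markov partition, which semiconjugates $f|_X$ finite-to-one to a topologically mixing subshift of finite type $\sigma|_{X_A}$ with $h_{\rm top}(\sigma|_{X_A})=h_{\rm top}(f|_X)$, and argues there. Fixing $\epsilon<\epsilon_0/4$ and an $(n,2\epsilon)$-separated set $E_n\subset X$ of maximal cardinality (so ${\rm card}(E_n)=s_n(2\epsilon)$), I assign to each $x\in E_n$ a point $y_x\in\Per_{n+p}(f)\cap X$ with $d_n(x,y_x)<\epsilon$; if $y_x=y_{x'}$ then $d_n(x,x')<2\epsilon$, contradicting separation, so $x\mapsto y_x$ is injective and ${\rm card}(\Per_{n+p}(f)\cap X)\ge s_n(2\epsilon)$. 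Taking $\limsup$ over the periods $n+p$ gives $\limsup_{m\to\infty}\frac1m\log{\rm card}(\Per_m(f)\cap X)\ge\limsup_n\frac1n\log s_n(2\epsilon)$ for every $\epsilon>0$, and since the right-hand side increases to $h_{\rm top}(f|_X)$ as $\epsilon\to0$, this forces equality in \eqref{eqdruck} in view of the upper bound. To upgrade the $\limsup$ to a genuine limit I would rerun the construction with several orbit segments of varying lengths concatenated with gaps of length $p$, so as to realize periodic points of every sufficiently large period and bound the corresponding $\liminf$ below by $h_{\rm top}(f|_X)-\eta$ for arbitrary $\eta>0$; alternatively, in the symbolic model ${\rm card}\,\Per_n(\sigma|_{X_A})={\rm tr}(A^n)$ grows like $e^{nh_{\rm top}}$ by Perron--Frobenius, and the periodic count of $f|_X$ differs from it only subexponentially.

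The upper bound needs nothing beyond positive expansiveness, so I expect the main difficulty to be the specification/shadowing input in the mixing case for uniformly expanding sets that need not be invertible --- in particular invoking (or arranging) local maximality so that the shadowing orbit can be taken periodic --- together with the bookkeeping that keeps the orbit-closing map essentially injective and realizes every large period in order to turn the $\limsup$ equality into a limit. These are precisely the classical arguments of \cite{Ru} (or the symbolic coding together with Perron--Frobenius), so in the write-up I would carry out the expansiveness step in detail and refer to \cite{Ru} for the remaining points.
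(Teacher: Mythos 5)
The paper offers no proof of Proposition~\ref{ha}: it is stated as a classical result with a pointer to \cite{Ru}. Your sketch is essentially the standard argument behind that citation --- positive expansiveness of a uniformly expanding set makes $\Per_n(f)\cap X$ an $(n,\epsilon_0)$-separated set for the upper bound, and shadowing/specification (equivalently a Markov coding and Perron--Frobenius) closes up orbit segments for the equality and the existence of the limit in the mixing case --- and you correctly flag the one genuine subtlety, namely that the equality part needs local maximality (or an equivalent) so that the shadowing periodic orbits lie in $X$; this hypothesis is implicit in the classical statement and is satisfied by the sets (conjugate to full shifts) to which the proposition is applied later in the paper.
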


\section{The geometry of rotation sets}
In this section we study the question if every compact and convex set
is attained as a rotation set within the class of subshifts of finite type.

It follows from the facts that $\cM$ is a compact  convex set, and that $\mu\mapsto \rv(\mu)$ is continuous and affine that the rotation set is always compact and convex. To show that for shift maps,
these are  the sole restrictions, we explicitly construct for an arbitrary convex compact set $K$ in $\bR^m$  a continuous potential on a shift-space which generates $K$ as its rotation set.

Throughout this section we will use the notation from Section 2.3. The following theorem considers the 2-dimensional case. Subsequently, we will prove the corresponding statement in the $m$-dimensional setting.

\begin{theorem} \label{thm1}  Let $K$ be a compact convex subset of $\mathbb{R}^2$ and $( X, f)$ be a full one-sided shift with alphabet $\cA=\{0,1\}$. Then there exists a continuous potential $\Phi: X\to \bR^2$ such that $\R(\Phi)=K$.
\end{theorem}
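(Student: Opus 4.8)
The plan is to build the potential $\Phi$ by approximating $K$ from inside by an increasing sequence of convex polytopes $\mathcal{P}_1 \subset \mathcal{P}_2 \subset \cdots$ with $\bigcup_j \mathcal{P}_j$ dense in $K$ (so $\overline{\bigcup_j \mathcal{P}_j} = K$), and then constructing a sequence of potentials $\Phi_j$ with $\R(\Phi_j) = \mathcal{P}_j$ that converges uniformly to a limit $\Phi$. The first ingredient is a building block: for a single convex polygon $\mathcal{P}$ with vertices $v_1,\dots,v_k \in \bR^2$, I would produce a continuous $\Psi:X\to\bR^2$ with $\R(\Psi)=\mathcal{P}$. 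One natural way on the full $2$-shift is to first realize the vertices: choose $k$ disjoint clopen "regions" of $X$ (e.g.\ distinguished cylinder sets or tail sets built from long blocks) and define $\Psi$ to be essentially constant equal to $v_i$ on the $i$-th region, with a continuous interpolation in between. Since the full shift is mixing with the specification property, every point of the simplex spanned by $\{v_1,\dots,v_k\}$—hence every point of $\mathcal{P}$, once one notes a convex polygon is such a simplex's image or directly takes convex combinations of the periodic orbits sitting inside the regions—is realized as $\rv(\mu)$ for some invariant $\mu$; conversely $\rv(\mu)$ lies in the convex hull of the essential range of $\Psi$, which is $\mathcal{P}$. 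This gives $\R(\Psi)=\mathcal{P}$.

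The second and main step is the telescoping/limiting construction. I would write $\Phi = \Phi_1 + \sum_{j\ge 1}(\Phi_{j+1}-\Phi_j)$ and arrange $\|\Phi_{j+1}-\Phi_j\|_\infty \le \epsilon_j$ with $\sum_j \epsilon_j < \infty$, so the series converges uniformly and $\Phi$ is continuous. The delicate point is to carry out the construction so that (i) $\R(\Phi_j)=\mathcal{P}_j$ for each $j$, (ii) the perturbations $\Phi_{j+1}-\Phi_j$ are supported on a shrinking family of cylinders so that $\Phi$ agrees with $\Phi_j$ on a large part of $X$, and (iii) one retains enough control to identify $\R(\Phi)$ exactly. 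For the inclusion $\R(\Phi)\subset K$: given $\mu\in\cM$, I would estimate $\rv_\Phi(\mu)$ by $\rv_{\Phi_j}(\mu) + O(\sum_{i\ge j}\epsilon_i)$; since $\rv_{\Phi_j}(\mu)\in\mathcal{P}_j\subset K$ and the tail $\to 0$, we get $\rv_\Phi(\mu)\in K$ by closedness of $K$. For the reverse inclusion $K\subset \R(\Phi)$: each vertex (and more generally each point) of $\mathcal{P}_j$ should be realized by an invariant measure $\mu$ supported on a region where $\Phi$ already coincides with $\Phi_j$ (e.g.\ a periodic orbit through a cylinder untouched by all later perturbations), so that $\rv_\Phi(\mu)=\rv_{\Phi_j}(\mu)$ exactly; then $\mathcal{P}_j\subset\R(\Phi)$ for all $j$, and taking closures, $K=\overline{\bigcup_j\mathcal{P}_j}\subset\overline{\R(\Phi)}=\R(\Phi)$ since $\R(\Phi)$ is compact.

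The main obstacle I anticipate is bookkeeping the cylinder supports of the successive perturbations so that both directions above go through simultaneously: the "realizing" measures for $\mathcal{P}_j$ must live in a part of $X$ that is frozen under all $\Phi_i$, $i>j$, while at the same time enough of $X$ must remain available for the perturbation $\Phi_{j+1}-\Phi_j$ to enlarge the rotation set from $\mathcal{P}_j$ to $\mathcal{P}_{j+1}$. A clean way to organize this is a nested dyadic-type partition: reserve at stage $j$ finitely many long cylinders to "pin down" the new vertices of $\mathcal{P}_{j+1}$ and the old ones of $\mathcal{P}_j$, and perform all future modifications inside the complementary cylinders, whose total measure/$\Phi$-oscillation one bounds by $\epsilon_j$. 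One should also check the uniform continuity of the limit $\Phi$ at points not eventually inside any fixed cylinder—but this is automatic from uniform convergence of continuous functions. Finally, since $(X,f)$ is a mixing SFT it has strong thermodynamic properties, so all the invariant-measure realizations used above are legitimate, and the convexity/compactness of $\R(\Phi)$ (hence the passage to closures) is guaranteed by the general remarks preceding the theorem.
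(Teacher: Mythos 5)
Your overall architecture --- inner approximation of $K$ by polytopes, a uniformly convergent (telescoping) sequence of potentials, a tail estimate for $\R(\Phi)\subset K$, and a closure argument for the reverse inclusion --- is the same as the paper's, and the two soft halves of your argument (the estimate $d(\rv_\Phi(\mu),\R(\Phi_j))\le\sum_{i\ge j}\epsilon_i$ and the passage to closures using compactness of $\R(\Phi)$) are fine. The gap is that essentially all of the content of the theorem sits inside the step you defer: constructing $\Phi_{j+1}$ from $\Phi_j$ so that the new vertices of $\mathcal{P}_{j+1}$ are realized while $\|\Phi_{j+1}-\Phi_j\|_\infty\le\epsilon_j$. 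You correctly flag this as ``the main obstacle'' but do not resolve it, and the specific variant you propose --- demanding $\R(\Phi_j)=\mathcal{P}_j$ exactly, with the realizing measures supported on cylinders frozen under all later perturbations --- is precisely what the paper states cannot be achieved without compromising uniform convergence. The reason: to realize a new vertex $v$ exactly by a measure that is frozen forever, you need a fresh periodic orbit on which $\Phi_{j+1}$ is constantly equal to $v$; but a fresh orbit, long enough to pass through previously unused cylinders, necessarily visits many cylinders on which $\Phi_j$ takes several different old values, and you must verify that \emph{every} one of those values lies within $\epsilon_j$ of $v$. Nothing in your sketch (the ``nested dyadic-type partition'') guarantees this, and it fails for a generic choice of new orbit.

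The paper's resolution, which is the missing idea here, is to give up exactness: the new orbit is the old generator repeated $3^{n+1}$ times with the last symbol flipped; the potential is changed to the new boundary point only along the first $k_n=m_{n+1}-m_n$ shifts of that orbit and retains its old values on the remaining $m_n$ shifts, where a careful case analysis shows those old values are within $O(2^{-n})$ of the new point --- this is what saves uniform convergence. The price is that the new vertex is realized only up to an error $\tfrac{m_n}{m_{n+1}}\cdot O(2^{-n})=O(6^{-(n+1)})$ by the corresponding periodic-orbit measure, so one obtains only the sandwich $\mathrm{conv}\{w^*_{n+1,j}\}\subset\R(\Phi_{n+1})\subset K$ rather than equality with a polytope, and the proof closes with a two-sided Hausdorff-convergence argument instead of your ``frozen measures plus closure'' step. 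Until you either carry out a construction meeting your exactness and freezing requirements (which would need a genuinely new mechanism) or switch to this approximate version with the accompanying bookkeeping, the proof is incomplete at its central point.
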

\begin{proof}
In case of empty interior, $K$ is reduced to either a single point or a line segment. In those situations the construction of $\Phi$ is trivial. Hence, we only need to consider the case when $K$ has non-empty interior. Note also that since $K$ is a compact convex set in $\bR^2$, the boundary of $K$ is Lipschitz and is therefore rectifiable. After normalizing we may assume the boundary has length one.

The proof is based on an approximation argument. We create a sequence of continuous potentials $\Phi_n$ which converges uniformly to a potential $\Phi$. At the same time, we assure that $\R(\Phi_n)$ converge to $K$. This is done by approximating $K$ by polygons with vertices on the boundary of $K$. Ideally, the rotation sets of $\Phi_n$ are equal to these polygons, but this will be  not  possible to be achieved without compromising the uniform convergence. It is sufficient however, for $\R(\Phi_n)$ to be suitably close to those polygons.

We will construct a sequence $\{\Phi_n\}_{n=0}^\infty$ of potentials on $ X$ with the following properties:
\begin{enumerate}
        \item \label{assumptions1} $\Phi_n: X\to \bR^2$ is continuous
        \item \label{assumptions2} $\|\Phi_{n+1}-\Phi_n\|_\infty\le\frac{11}{8}\cdot\frac{1}{2^{n}}$
        \item \label{assumptions3} There exists a set of $2^{n+1}$ equidistant points $\{w_{n,j}\}_{j=1}^{2^{n+1}}$ on the boundary of $K$ and a corresponding set $\{w^*_{n,j}\}_{j=1}^{2^{n+1}}$ of points in $K$ such that
            $${\text{conv}}\{w^*_{n,j}\}\subset \R(\Phi_n)\subset K\quad\text{and}\quad \|w_{n,j}-w^*_{n,j}\|_2\le\frac{5}{4}\cdot\frac{1}{6^{n}},$$
where ${\rm conv}\ A$ denotes the convex hull of the set $A$.

\end{enumerate}

The sequence will be explicitly constructed by induction on n. We will hence start with the case n=0.
To initiate, we place two equidistant points $w_{0,1}$ and $w_{0,2}$ on the boundary of $K$. We split $ X$ into 2 cylinders of length one generated by (0) and (1)  and define
$$\Phi_0(x)=\begin{cases} w_{0,1}\quad if\,\, x\in\C(0)\\ w_{0,2}\quad if\,\, x\in\C(1)\end{cases}$$
Then $\R(\Phi_0)$ is exactly the line segment connecting points $w_{0,1}$ and $w_{0,2}$. We call $\C_{0,1}=\C(0)$ and $\C_{0,2}=\C(1)$ the original cylinders of step 0, which correspond to the points $w_{0,1}$ and $w_{0,2}$.

To illustrate the process, we will also show how to move from $n=0$ to $n=1$ here (see Figure \ref{RotSet}). Afterwards, we will demonstrate the general step from $n$ to $n+1$.

We move along the boundary of $K$ by $\frac18$ ($\frac14$ of the distance between $w_{0,1}$ and $w_{0,2}$) to the left and to the right of $w_{0,1}$ and place points $L_{0,1}$ and $R_{0,1}$ respectively. Similarly, we place points $L_{0,2}$ and $R_{0,2}$ around $w_{0,2}$. Then $L_{0,1}, R_{0,1}, L_{0,2}, R_{0,2}$ are four equidistant points on the boundary of $K$. This is shown on the left hand side of Figure \ref{RotSet}. To pass to the next potential $\Phi_1$ we start by taking $\Phi_1=\Phi_0$ and then modify it as follows.

We build the original cylinders of step 1 by
\begin{enumerate}
   \item [(a)] replicating the generators of the original cylinders of the previous step three times: $\C(0)\leadsto\C(000),\,\C(1)\leadsto\C(111)$
   \item [(b)] altering the last element to be either 0 or 1: $\C(000)\leadsto\{\C(000),\C(001)\}$ and $\C(111)\leadsto\{\C(111),\C(110)\}$
 \end{enumerate}
 We name the four cylinders obtained: $\C_{1,1}=\C(000), \C_{1,2}=\C(001), \C_{1,3}=\C(111), \text{ and }\C_{1,4}=\C(110)$. We assign $\Phi_1$ the value of the point to the left of the value of $\Phi_0$, if the last element was not changed, and the point to the right, if the last element was changed, e.g. $\Phi_1=R_{0,1}$ on $\C(001)$. As a result, the rotation set of $\Phi_1$ is contained in the polygon spanned by all the points (see left hand side of Figure \ref{RotSet}). However we need to make sure that $\R(\Phi_1)$ is sufficiently close to this polygon. To get rotation vectors which are close to the vertices, it is necessary to change the values of the potential on certain periodic orbits. A subtlety about doing this is that we should not change the values of $\Phi_1$ on the whole orbit, since we need these values to remain close to $\Phi_0$.

To be precise, each of the original cylinders contains a periodic orbit with the same generator. We define $\Phi_1$ to be the same on all cylinders generated by the first 3 elements of the shifted periodic orbits of such form. For example,  since $\C(001)$ supports $\Or(001)$ and $ f\Or(001)=\Or(010)$, we set $\Phi_1=R_{0,1}$ on $\C(010)$. We keep $\Phi_1$ at all the other points the same as $\Phi_0$. Summarizing all of the above we have
\begin{equation}\label{phi_1} \Phi_1(x)=\begin{cases} L_{0,1},\quad if\,\, x\in\C(\pi_3\circ f^k\Or(000)),\quad k=0,1\\
                          R_{0,1},\quad if\,\, x\in\C(\pi_3\circ f^k\Or(001)),\quad k=0,1\\
                          L_{0,2},\quad if\,\, x\in\C(\pi_3\circ f^k\Or(111)),\quad k=0,1\\
                          R_{0,2},\quad if\,\, x\in\C(\pi_3\circ f^k\Or(110)),\quad k=0,1\\
                          \Phi_0(x),\quad \text{otherwise}
            \end{cases}
\end{equation}

The potential $\Phi_1$ is constant on cylinders of length $m_1=3$, and the value of $\Phi_1$ on each such cylinder is a point on the boundary of $K$.  Since these cylinders form a disjoint partition of $ X$ by clopen sets, $\Phi_1$ is continuous with respect to the product topology.

Note that $\Phi_1(x)$ and $\Phi_0(x)$ may have different values only when $x$ is in one of the cylinders listed above. In this case $\|\Phi_0(x)-\Phi_1(x)\|$ is either $\|L_{0,i}-w_{0,i}\|$ or $\|R_{0,i}-w_{0,i}\|$ for $i=1,2$, and therefore cannot exceed $\frac18$. Hence, $$\|\Phi_1-\Phi_0\|_\infty=\sup_{x\in X}\|\Phi_1(x)-\Phi_0(x)\|\le\frac{1}{8}<\frac{11}{8}\cdot\frac{1}{2^0}$$

For $j=1,...,4$ let $w_{1,j}=\Phi_1(\C_{1,j})$ denote the point on the boundary of $K$, $\Or_{1,j}$ be the orbit with the same generator as $\C_{1,j}$ and $w^*_{1,j}=\rv_{\Phi_1}(\mu_{1,j})$, where $\mu_{1,j}$ is a $ f$-invariant measure supported on $\Or_{1,j}$ (see the right hand side of Figure \ref{RotSet}). Note that from here on we use an additional index for rotation vectors to emphasize the underlying potential. Then $w^*_{1,1}=w_{1,1},\, w^*_{1,3}=w_{1,3}$. However,
\begin{equation}
\begin{split}
w^*_{1,2}&=\mu_{1,2}(\Or(001))\Phi_1(\C(001))\\
&\quad +\mu_{1,2}(\Or(010))\Phi_1(\C(010))+\mu_{1,2}(\Or(100))\Phi_1(\C(100))\\
&=\frac13 w_{1,2}+\frac13 w_{1,2}+\frac13 w_{0,2}\\
&=\frac23 w_{1,2}+\frac13 w_{0,2}
\end{split}
\end{equation}
and $w^*_{1,4}=\frac23 w_{1,4}+\frac13 w_{0,1}$.

We compute the distance between corresponding points and obtain
$$\|w_{1,2}-w^*_{1,2}\|_2=\|w_{1,2}-\frac23 w_{1,2}-\frac13 w_{0,2}\|_2 =\frac13\|w_{1,2}-w_{0,2}\|_2\le\frac{1}{3}\cdot\frac38\le\frac54\cdot\frac16$$
and $\|w_{1,4}-w^*_{1,4}\|_2\le\frac54\cdot\frac16$.

For any measure $\mu\in\cM$, the point
\begin{equation}
\rv_{\Phi_1}(\mu)=\sum_{C\text{-cylinder of length 3}}\mu(C)\Phi_1(C)
\end{equation}
 is a convex combination of the points $\{w_{i,j}: i=0,1;j=1,...,2^{i+1}\}$ in $K$. Therefore, $\R(\Phi_1)$ is a subset of $K$. On the other hand, $\R(\Phi_1)$ contains points $w^*_{1,j}$ and hence ${\text{conv}}\{w^*_{1,j}\}_{j=1}^4\subset \R(\Phi_1)$.
\begin{figure}[h]
\begin{center}
\psfragscanon
\psfrag{1}[c][l]{{\huge{${\R (\Phi_{1})}$}}}
\psfrag{2}[c][l]{{\huge{is inside}}}
\psfrag{9}[c][l]{{\huge{inside}}}
\psfrag{3}[c][l]{{\huge{$w_{0,1}$}}}
\psfrag{4}[c][l]{{\huge{$L_{0,1}$}}}
\psfrag{5}[c][l]{{\huge{$R_{0,2}$}}}
\psfrag{6}[c][l]{{\huge{$w_{0,2}$}}}
\psfrag{7}[c][l]{{\huge{$L_{0,2}$}}}
\psfrag{8}[l][l]{{\huge{$R_{0,1}$}}}
\psfrag{5*}[c][l]{{\huge{$w_{1,4}^*$}}}
\psfrag{8*}[c][l]{{\huge{$w_{1,2}^*$}}}
\psfrag{4'}[c][l]{{\huge{$w_{1,1}$}}}
\psfrag{5'}[c][l]{{\huge{$w_{1,4}$}}}
\psfrag{7'}[c][l]{{\huge{$w_{1,3}$}}}
\psfrag{8'}[l][l]{{\huge{$w_{1,2}$}}}

\scalebox{0.45}{\includegraphics{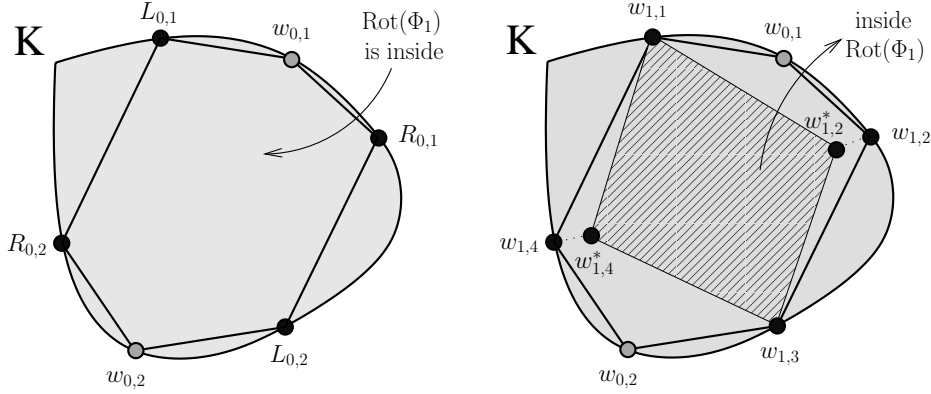}}
\caption{This figure illustrates step from $n=0$ to $n=1$ in the proof of Theorem \ref{thm1}}
\label{RotSet}
\end{center}
\end{figure}

Now we are ready to follow the general induction step from $n$ to $n+1$.
Suppose we have defined the sequence of potentials $\Phi_0,\Phi_1,...,\Phi_{n}$ satisfying (\ref{assumptions1}),(\ref{assumptions2}),(\ref{assumptions3}). Here $\Phi_{n}$ is constant on cylinders of size $m_{n}$. We introduce a new set of points on the boundary of $K$. For each $j$ denote by $L_{n,j}$ the point located exactly $\frac14\cdot\frac{1}{2^{n}}$ along the boundary to the left of $w_{n,j}$ and by $R_{n,j}$ the point of the same distance to the right. Then the set $\{L_{n,j},R_{n,j}\}_{j=1}^{2^{n+1}}$ gives $2^{n+2}$ equidistant points on the boundary of $K$. We enumerate these points starting from $L_{n,1}$ and moving to the right along the boundary: $w_{n+1,1}=L_{n,1},\, w_{n+1,2}=R_{n,1},\, w_{n+1,3}=L_{n,2},\, w_{n+1,4}=R_{n,2}$ and so on. The general formula is
\begin{equation}
w_{n+1,i}=\left\{
    \begin{array}{ll}
      $$L_{n,j}$$, & \hbox{if $i$ is odd for $j=\frac12(i+1)$;} \\
      $$R_{n,j}$$, & \hbox{if $i$ is even for $j=\frac12i$.}
    \end{array}
  \right.
\end{equation}
We denote the original cylinders of step $n$ by $\{\C_{n,j}\}_{j=1}^{2^{n+1}}$. Then for $j=1,...,2^{n+1}$ the corresponding point on the boundary of $K$ $w_{n,j}=\Phi_{n}(\C_{n,j})$. Furthermore, let $\Or_{n,j}$ be the orbit with the same generator as $\C_{n,j}$ and  $w^*_{n,j}=\rv_{\Phi_{n}}(\mu_{n,j})$, where $\mu_{n,j}$ is a $ f$-invariant measure supported on $\Or_{n,j}$.

We construct the original cylinders of step $n+1$ by replicating the generator of each original cylinder of the previous step $m_{n+1}=3^{n+1} m_{n}$ times and then changing the last entry to be either 0 or 1. E.g. for $j=1,...,2^{n+1}$ define
\begin{equation}
\tau_{j}=(\pi_{m_{n+1}}\Or_{n,j})\quad\text{and}\quad\bar{\tau}_{j}=(\pi_{m_{n+1}-1}\Or_{n,j},a)
\end{equation}
 where $a$ is a binary complement to the last entry of $\tau_j$. Then the sequence of $m_{n+1}$-tuples $\tau_j$ and $\bar{\tau}_j$  will be the generators of the original cylinders of step $n+1$. For each cylinder of step $n$ we obtain 2 new original cylinders of step $n+1$. Let $k_n=m_{n+1}-m_n=(3^n-1)m_n$. We define $\Phi_{n+1}$ as follows
$$\Phi_{n+1}(x)=\begin{cases}
L_{n,j},&\text{if}\, x\in\C(\pi_{m_{n+1}}\circ f^k\Or(\tau_{j})),\, k=0,...,m_{n+1}-1\\
R_{n,j},&\text{if}\, x\in\C(\pi_{m_{n+1}}\circ f^k\Or(\bar{\tau}_{j})),\, k=0,...,k_n-1\\
\Phi_n( f^{k_n}\Or(\bar{\tau}_{j})),&\text{if}\, x\in\C(\pi_{m_{n+1}}\circ f^k\Or(\bar{\tau}_{j})),\, k=k_n,...,m_{n+1}-1\\
\Phi_n(x),&\text{otherwise}
                \end{cases}$$

By definition, $\Phi_{n+1}$ is constant on cylinders of length $m_{n+1}$ and therefore it is continuous on $ X$.

Note that $ f^{k_n}\Or(\bar{\tau}_{j})$ and $\Or_{n,j}$ have the same first $m_n-1$ entrees and different $m_n$-th entree. Therefore, these elements are mapped into consecutive points under $\Phi_n$. Since $\Phi_n(\Or_{n,j})=w_{n,j}$, either $\Phi_n( f^{k_n}\Or(\bar{\tau}_{j}))=w_{n,j-1}$ or $\Phi_n( f^{k_n}\Or(\bar{\tau}_{j}))=w_{n,j+1}$. (Here additive operations on j are performed modulo $2^{n+1}$).

Now we will estimate $\sup_{x\in F}\|\Phi_{n+1}(x)-\Phi_n(x)\|_2$. First, suppose $x\in\C(\pi_{m_{n+1}}\circ f^k\Or(\tau_{j}))$ for some $k$ between 0 and $m_{n+1}-1$ and $j$ between 1 and $2^{n+1}$. Then $\Or(\tau_j)=\Or_{n,j}$ and $\Phi_n(\Or_{n,j})=w_{n,j}$.

If $j$ is odd, then $w_{n,j}=L_{n-1,i}$ for $i=\frac12(j+1)$ and hence
\begin{equation}
\Phi_n(x)=\Phi_n( f^k\Or_{n,j})=L_{n-1,i}=w_{n,j}\quad\text{for all k}.
\end{equation}
If $j$ is even, then  $w_{n,j}=R_{n-1,i}$ for $i=\frac12j$. Therefore $\Phi_n(x)=\Phi_n( f^k\Or_{n,j})=R_{n-1,i}$ for $k<k_{n-1}$ (here $k_{n-1}=m_n-m_{n-1}$) and $\Phi_n(x)=\Phi_{n-1}( f^{k_{n-1}}\Or_{n,j})$ for $k_{n-1}\le k<m_n$. Then the same values repeat periodically for $k$ on each of the intervals of length $m_n$, that is,
$$\Phi_n(x)=\begin{cases}R_{n-1,i}, &\text{ for }sm_n\le k<sm_m+k_{n-1};\\
\Phi_{n-1}( f^{k_{n-1}}\Or_{n,j}), &\text{ for } sm_n+k_{n-1}\le k<(s+1)m_n,\\
\end{cases}$$
where $s=0,...,3^{n+1}-1$.
In this case either $\Phi_{n-1}( f^{k_{n-1}}\Or_{n,j})=w_{n-1,i-1}$ or $\Phi_{n-1}( f^{k_{n-1}}\Or_{n,j})=w_{n-1,i+1}$. Summarizing the above we get
$$\|\Phi_{n+1}(x)-\Phi_n(x)\|_2\le \max\{\|L_{n,j}-w_{n,j}\|,\,\|L_{n,j}-w_{n-1,i-1}\|,\,\|L_{n,j}-w_{n-1,i+1}\|\}$$
Since the distance along the boundary between $L_{n,j}$ and $w_{n,j}$ is $\frac14\cdot\frac{1}{2^{n+1}}$, \\ $\|L_{n,j}-w_{n,j}\|\le \frac{1}{2^{n+3}}$. Also,
\begin{align*}\|L_{n,j}-w_{n-1,i\pm 1}\| &\le\|L_{n,j}-w_{n,j}\|+\|w_{n,j}-w_{n-1,i\pm 1}\|\\
&=\|L_{n,j}-w_{n,j}\|+\|R_{n-1,i}-w_{n-1,i\pm 1}\|\\
&\le \|L_{n,j}-w_{n,j}\|+\|R_{n-1,i}-w_{n-1,i}\|+\|w_{n-1,i}-w_{n-1,i\pm1}\|\\
&\le \frac14\cdot\frac{1}{2^{n+1}}+\frac14\cdot\frac{1}{2^{n}}+\frac{1}{2^{n}}\\
&\le \frac{11}{8}\cdot\frac{1}{2^{n}}.
\end{align*}
The estimates are similar when $x\in\C(\pi_{m_{n+1}}\circ f^k\Or(\bar{\tau}_{j})),\,\,\, k=0,...,k_n-1$. In this case, since $\tau_j$ and $\bar{\tau}_j$ differ only by $m_{n+1}$ entry, $\pi_{m_n}\circ f^k\Or(\bar{\tau}_j)=\pi_{m_n}\circ f^k\Or_{n,j}$. Thus, $\Phi_n(x)=\Phi_n( f^k\Or_{n,j})$. By replacing $L_{n,j}$ by $R_{n,j}$ in the arguments above we obtain
$$\|\Phi_{n+1}(x)-\Phi_n(x)\|_2\le\max\{\|R_{n,j}-w_{n,j}\|_2,\,\|R_{n,j}-w_{n-1,i\pm 1}\|_2\}\le\frac{11}{8}\cdot\frac{1}{2^n}.$$

Finally, consider $x\in\C(\pi_{m_{n+1}}\circ f^k\Or(\bar{\tau}_{j}))$ when $k=k_n,...,m_{n+1}-1$. As discussed before, either $\Phi_{n+1}(x)=w_{n,j-1}$ or $\Phi_{n+1}(x)=w_{n,j+1}$. We assume $\Phi_{n+1}(x)=w_{n,j-1}$, the other case is similar. If $j$ is even, $j-1$ is odd and $w_{n,j-1}=L_{n-1,i}$ for $i=\frac12j$. Thus $\Phi_n(x)=\Phi_n( f^k\Or(\bar{\tau}_j))=L_{n-1,i}=\Phi_{n+1}(x)$ for all $k=k_n,...,m_{n+1}-1$. If $j$ is odd, $j-1$ is even and $w_{n,j-1}=R_{n-1,i}$ for $i=\frac12(j-1)$. Then $\Phi_n( f^k\Or(\bar{\tau}_j))=R_{n-1,i}$ for $k_n\le k<k_n+k_{n-1}$ and $\Phi_n( f^k\Or(\bar{\tau}_j))=\Phi_{n-1}( f^k_{n-1}\Or(\bar{\tau}_j))=w_{n-1,i\pm1}$ for $k_n+k_{n-1}\le k<m_{n+1}$. Since
\begin{align*}\|R_{n-1,i}-w_{n-1,i\pm1}\|_2&\le\|R_{n-1,i}-w_{n-1,i}\|_2+\|w_{n-1,i}-w_{n-1,i\pm1}\|_2\\ &\le\frac14\cdot\frac{1}{2^n}+\frac{1}{2^n}\\
&\le\frac54\cdot\frac{1}{2^n}
\end{align*}
we conclude that
\begin{equation}
\|\Phi_{n+1}(x)-\Phi_n(x)\|_2\le\frac54\cdot\frac{1}{2^n}.
\end{equation}
It follows that
\begin{equation}
\sup_{x\in F}\|\Phi_{n+1}(x)-\Phi_n(x)\|_2\le\frac{11}{8}\cdot\frac{1}{2^n},
\end{equation}
 which concludes the proof of (\ref{assumptions2}). Next, we prove part (\ref{assumptions3}).

On any cylinder of size $m_{n+1}$ the potential $\Phi_{n+1}$ is equal to one of the points on the boundary of $K$ that were obtained during this or one of the previous steps. Since these cylinders form a disjoint partition of $ X$, for any probability measure $\mu$ the rotation vector $\rv_{\Phi_{n+1}}(\mu)$ is a convex combination of the boundary points of $K$. Convexity of $K$ implies that $\R(\Phi_{n+1})\subset K$.

As before, we denote the original cylinders of step $n+1$ by $\{\C_{n+1,j}\}_{j=1}^{2^{n+2}}$. Then for $j=1,...,2^{n+2}$ let $w_{n+1,j}=\Phi_{n+1}(\C_{n+1,j})$ be the corresponding point on the boundary of $K$, $\Or_{n+1,j}$ be the orbit with the same generator as $\C_{n+1,j}$ and $w^*_{n+1,j}=\rv_{\Phi_{n+1}}(\mu_{n+1,j})$, where $\mu_{n+1,j}$ is a $ f$-invariant measure supported on $\Or_{n+1,j}$. Since $w^*_{n+1,j}$ are rotation vectors of $\Phi_{n+1}$ and the rotation set is convex, ${\text{conv}}\{w^*_{n+1,j}\}_j\subset \R(\Phi_{n+1})$.

Using the fact that $\mu_{n+1,j}( f^k\Or_{n+1,j})=\frac{1}{m_{n+1}}$ for all $k$, we compute
\begin{equation}
w^*_{n+1,j}=\rv_{\Phi_{n+1}}(\mu_{n+1,j})=\frac{1}{m_{n+1}}\sum_{k=0}^{m_{n+1}-1}\Phi_{n+1}( f^k\Or_{n+1,j}).
\end{equation}
If $j$ is odd, $\Phi_{n+1}(\Or_{n+1,j})=w_{n+1,j}=L_{n,i}$  for $i=\frac12(j+1)$. Then $\Phi_{n+1}(\Or^k_{n+1,j})=L_{n,i}=w_{n+1,j}$ for all $k$ and hence $w^*_{n+1,j}=w_{n+1,j}$. If $j$ is even, $\Phi_{n+1}(\Or_{n+1,j})=w_{n+1,j}=R_{n,i}$ for $i=\frac12j$. Then $\Phi_{n+1}(\Or^k_{n+1,j})=R_{n,i}=w_{n+1,j}$ for $k<k_n$ and $\Phi_{n+1}(\Or_{n+1,j})=w_{n+1,j}=L_{n,i}$ for $i=\frac12(j+1)$. Then $\Phi_{n+1}(\Or^k_{n+1,j})$ is either $w_{n,i-1}$ or $w_{n,i+1}$ for $k\ge k_n$. Assuming $\Phi_{n+1}(\Or^k_{n+1,j})=w_{n,i-1}$ we get
\begin{equation}
w^*_{n+1,j}=\frac{1}{m_{n+1}}\left(\sum_{k=0}^{k_{n}-1}w_{n+1,j}+\sum_{k=k_n}^{m_{n+1}-1}w_{n,i-1}\right).
\end{equation}
Hence,
\begin{align*}
\|w_{n+1,j}-w^*_{n+1,j}\|_2&=\frac{1}{m_{n+1}}\|\sum_{k=k_n}^{m_{n+1}-1}w_{n+1,j}-w_{n,i-1}\|_2\\
&\le\frac{m_{n+1}-k_n}{m_{n+1}}\|w_{n+1,j}-w_{n,i-1}\|_2\\
&\le\frac{m_{n}}{m_{n+1}}(\|R_{n,i}-w_{n,i}\|_2+\|w_{n,i}-w_{n,i-1}\|_2)\\
&\le\frac{1}{3^{n+1}}\left(\frac14\cdot\frac{1}{2^n}+\frac{1}{2^n}\right)\\
&=\frac{5}{4}\cdot\frac{1}{6^{n+1}}.
\end{align*}
This proves (\ref{assumptions3}), and the induction is complete.\\

It follows from (\ref{assumptions1}) and (\ref{assumptions2}) that the sequence $(\Phi_n)_{n\in\bN}$ converges uniformly to a continuous potential $\Phi$. For any $\epsilon>0$ there is $N$ such that $\|\Phi-\Phi_n\|_\infty<\epsilon$ whenever $n>N$. Then for $x\in \R(\Phi)$ and $x=\rv_{\Phi}(\mu_x)$ we have
\begin{equation}
\begin{split}
d(x,\R(\Phi_n))&=\inf_{y\in \R(\Phi_n)}\|x-y\| \\
&\le \|\rv_{\Phi}(\mu_x)-\rv_{\Phi_n}(\mu_x)\|\le\int\|\Phi-\Phi_n\|d \mu_x<\epsilon.
\end{split}
\end{equation}
Moreover, for $y\in \R(\Phi_n)$ and  $y=\rv_{\Phi_n}(\mu_y)$ we have
\begin{equation}
\begin{split}
d(y,\R(\Phi))&=\inf_{x\in \R(\Phi)}\|x-y\|\le\|\rv_{\Phi}(\mu_y)-\rv_{\Phi_n}(\mu_y)\| \\
&\le\int\|\Phi-\Phi_n\|d \mu_y<\epsilon.
\end{split}
\end{equation}
Therefore, the Hausdorff distance $d_H(\R(\Phi),\R(\Phi_n))<\epsilon$ for all $n>N$ and hence $\R(\Phi_n)$ converges to $\R(\Phi)$.

On the other hand, ${\text{conv}}\{w^*_{n,j}\}_j\subset \R(\Phi_n)\subset K$ and the polygons ${\text{conv}}\{w^*_{n,j}\}_j$ converge to $K$ with respect to the Hausdorff metric. Thus, $\R(\Phi_n)$ also converges to $K$ as $n\to\infty$. We obtain $\R(\Phi)=K$, which concludes the proof of the theorem.

\end{proof}

\begin{remarks}{\rm (i) }
The potential obtained in  Theorem \ref{thm1} is not H\"older continuous. To see this, consider $x=(0000000...)$ and $x_n=\Or_{n,2}$ which is a periodic point generated by a $m_n$-tuple $(000...01)$. Then $\Phi(x)$ is the point located $\frac14$ along $\partial K$ to the left of $w_{0,1}$. Point $\Phi(x_n)$ is found by starting at $w_{0,1}$, then moving $\frac14$ to the left and $\frac12\cdot\frac{1}{2^n}$ to the right along  $\partial K$. Then the distance between $\Phi(x)$ and $\Phi(x_n)$ decreases as $2^{-n}$, however $d(x,x_n)=2^{-m_n}$ where $m_n\approx 3^{n^2}$.\\
{\rm (ii)}
While Theorem \ref{thm1} is formulated for a one-sided full shift, the procedure of the proof can be easily generalized to (one and two-sided) shifts and topologically mixing subshifts of finite type.
In case of a topologically mixing subshift $(X_A,f)$ of finite type the fact that $f$ has positive topological entropy guarantees that $f$ has sufficiently many periodical points that can be used to construct the
sequence of potentials $(\Phi_n)_{n\in \bN}$ converging to a potential $\Phi$ and satisfying similar conditions as {\rm (1), (2), (3)}.
\end{remarks}

We now  generalize Theorem \ref{thm1} to $\bR^m$.

\begin{theorem} \label{thm2m} Let $K$ be a compact convex subset of $\mathbb{R}^m$. Then there exist a full one-sided shift map  $(X,f)$ with  finite alphabet and a continuous potential $\Phi: X\to \bR^m$ such that $\R(\Phi)=K$.
\end{theorem}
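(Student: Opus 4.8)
The plan is to bootstrap from the two-dimensional construction in Theorem \ref{thm1} by an iterated approximation of $K$ from inside by polytopes, together with a one-dimensional-at-a-time argument that controls each new coordinate direction. First I would dispose of the degenerate cases: if $K$ has empty interior in $\bR^m$ it is contained in an affine subspace of dimension $\le m-1$, so after an affine change of coordinates (which only composes $\Phi$ with an affine map and hence preserves the property $\R(\Phi)=K$) we reduce to a lower-dimensional compact convex set and proceed by induction on $m$, the base case $m\le 2$ being Theorem \ref{thm1} (and $m\le 1$ being trivial). So assume $K$ has nonempty interior.

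The substantive step is the same idea as in Theorem \ref{thm1}: approximate $K$ from inside by an increasing sequence of polytopes $\mathcal{P}_j\subset K$ with vertices on $\partial K$ and $\mathcal{P}_j\to K$ in the Hausdorff metric, and build continuous potentials $\Phi_j$ on a full one-sided shift so that $\conv\{\text{vertices of }\mathcal P_j\}$ is contained in $\R(\Phi_j)\subset K$, with $\|\Phi_{j+1}-\Phi_j\|_\infty$ summable. Then $\Phi_j\to\Phi$ uniformly, $\Phi$ is continuous, and the Hausdorff-continuity estimate $d_H(\R(\Phi),\R(\Phi_j))\le \|\Phi-\Phi_j\|_\infty$ (exactly as at the end of the proof of Theorem \ref{thm1}, using $d(\rv_\Psi(\mu),\rv_{\Psi'}(\mu))\le\|\Psi-\Psi'\|_\infty$) forces $\R(\Phi)=K$. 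The one new ingredient compared with $m=2$ is that the boundary $\partial K$ is now $(m-1)$-dimensional, so the bookkeeping of ``equidistant points moving left/right along $\partial K$'' must be replaced by a genuinely $(m-1)$-parameter family of vertices; but all that is really used about the $m=2$ construction is that $\partial K$ is a compact Lipschitz hypersurface admitting a sequence of finer and finer finite $\delta_j$-dense subsets $V_j\subset\partial K$ whose convex hulls exhaust $K$, and that, given a finite set of target points in $K$, one can build a potential constant on long cylinders whose orbit-measures realize rotation vectors within any prescribed tolerance of those targets. The cylinder surgery on (shifted) periodic orbits used in Theorem \ref{thm1} — replicate a generator many times, flip the last symbol to branch, and restore the potential to its previous value on the tail of the orbit so the sup-norm change stays $O(2^{-j})$ — works verbatim on a two-symbol alphabet regardless of the ambient dimension $m$, since the combinatorics of the shift is independent of $m$ and only the $\bR^m$-valued labels change.

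Concretely I would carry it out in this order: (1) reduce to $\inn K\ne\emptyset$ via the affine/induction argument above; (2) fix a sequence of finite sets $V_0\subset V_1\subset\cdots\subset\partial K$ with $V_j$ being $\varepsilon_j$-dense in $\partial K$, $\varepsilon_j\to 0$, and $\conv V_j\to K$; (3) run the inductive construction of $(\Phi_j)$, at stage $j\to j+1$ partitioning the relevant original cylinders, attaching the new vertices of $V_{j+1}$ as labels, and performing the orbit-tail surgery to guarantee simultaneously $\|\Phi_{j+1}-\Phi_j\|_\infty\le C\,2^{-j}$, $\conv\{w^*_{j+1,\ell}\}\subset\R(\Phi_{j+1})\subset K$, and $\|v_{j+1,\ell}-w^*_{j+1,\ell}\|_2\le C'\,6^{-j}$ where $v_{j+1,\ell}$ ranges over $V_{j+1}$ and $w^*_{j+1,\ell}$ is the rotation vector of the orbit carrying label $v_{j+1,\ell}$ — the constants are chosen exactly as in Theorem \ref{thm1}; (4) pass to the uniform limit $\Phi$, note $\R(\Phi_j)\to K$ from $\conv\{w^*_{j,\ell}\}\subset\R(\Phi_j)\subset K$ and $\|v_{j,\ell}-w^*_{j,\ell}\|_2\to 0$ with $\conv V_j\to K$, and conclude $\R(\Phi)=K$.

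I expect the only real obstacle to be step (3) in the case $m>2$: organizing the $(m-1)$-dimensional net $V_j$ of boundary points and the associated cylinder surgery so that the three estimates hold simultaneously, i.e. keeping the sup-norm perturbations geometrically small while still packing in enough new vertices to make $\conv V_j$ exhaust $K$. In $m=2$ this was handled by the explicit ``move along $\partial K$ by $\tfrac14\cdot 2^{-n}$'' device; for general $m$ one simply needs that each vertex of $V_{j+1}$ lies within $O(2^{-j})$ of a vertex of $V_j$ (so that replacing a label by a neighbouring label costs $O(2^{-j})$ in sup-norm), which one arranges by choosing the nets $V_j$ so that $V_{j+1}$ refines $V_j$ with mesh comparable to $2^{-j}$ — a standard fact for Lipschitz hypersurfaces. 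Given this, the argument of Theorem \ref{thm1} transfers line-for-line, so I would present the $m$-dimensional proof as an adaptation rather than a from-scratch construction, pointing to the $m=2$ estimates for the routine inequalities.
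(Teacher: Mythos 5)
Your overall strategy (inner approximation by polytopes with vertices on $\partial K$, uniform convergence of the potentials, and the Hausdorff estimate $d_H(\R(\Phi),\R(\Phi_j))\le\|\Phi-\Phi_j\|_\infty$) is the same as the paper's, and the reduction of the degenerate case to lower dimension is fine. But there is a genuine gap precisely at the point you declare the $m=2$ surgery to ``work verbatim.'' The combinatorics of Theorem \ref{thm1} produces exactly \emph{two} children per original cylinder per step (replicate the generator, then flip or keep the last symbol), so the number of available labels only doubles at each stage. A net of mesh $2^{-j}$ on the $(m-1)$-dimensional hypersurface $\partial K$ has on the order of $2^{j(m-1)}$ points, so for $m>2$ your sets $V_j$ grow like $4^j$ or faster while the original cylinders grow only like $2^j$; the binary branching cannot attach all the new vertices of $V_{j+1}$ to cylinders in a single step, and the requirement ``each vertex of $V_{j+1}$ lies within $O(2^{-j})$ of a vertex of $V_j$, with each old vertex parenting at most two new ones'' is unsatisfiable. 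So the three inductive estimates in your step (3) cannot hold simultaneously as stated.

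This mismatch between net growth and cylinder branching is exactly the new content of the $m$-dimensional case, and the paper resolves it differently: it covers $\partial K$ by finitely many bi-Lipschitz charts $h_1,\dots,h_N$ flattening the boundary onto the equatorial disk $B_0$ of the unit ball in $\bR^m$, takes the alphabet $\{0,\dots,2N-1\}$ with the pair $\{i-1,i\}$ reserved for chart $i$, and then runs the binary left/right subdivision of Theorem \ref{thm1} \emph{one coordinate of $B_0$ at a time}, cycling through $y_1,\dots,y_{m-1}$ and halving the displacement only after a full cycle. Consequently $2(m-1)$ binary steps are needed to halve the mesh, and the contraction estimate $\|\Phi_{2(m-1)(n+1)}-\Phi_{2(m-1)n}\|_\infty\le 2L\cdot\frac{11}{8}\cdot 2^{-n}$ holds only along that subsequence (with the Lipschitz constant $L$ of the charts entering the bounds). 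Your outline could be repaired without charts by slowing the mesh decay to $\delta_j\sim 2^{-j/(m-1)}$ (still summable) and exhibiting an explicit parent--child matching between $V_j$ and $V_{j+1}$ compatible with binary branching, or by increasing the branching factor; but either way this bookkeeping must be supplied --- it is the step your proposal asserts rather than proves.
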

\begin{proof} We can assume that $K$ has non-empty interior, since otherwise we can repeat the argument of the proof in a lower dimension.
First note that any open bounded convex set in $\bR^m$ has Lipschitz boundary (\cite[Corollary 1.2.2.3]{grisvard}). Then there is a finite covering of the boundary of $K$ by open balls $U_1,...,U_N$ with centers at points $A_1,...,A_N\in\partial K$ of radii $r_1,...,r_N$ and bijective maps $h_1,...,h_N$ from each neighborhood into the unit ball $B$ of $\bR^m$ such that \begin{enumerate}
                                        \item [(1)] $h_i$ and $h_i^{-1}$ are Lipschitz continuous with constant $L$.
                                        \item [(2)] $h_i(\partial K\cap U_i)=\{(y_1,...,y_m)\in B:y_m=0\}=B_0$
                                        \item [(3)] $h_i({\rm int}\ K\cap U_i)=\{(y_1,...,y_m)\in B:y_m>0\}=B_+$
                                      \end{enumerate}
For convenience we endow $\bR^m$ with the maximums norm. Let $ X$ be a one-sided shift space with alphabet $\cA=\{0, 1, 2, 3,..., 2N-2, 2N-1\}$. For each coordinate $y_1,...,y_{m-1}$, we apply the procedure used in the proof of the previous theorem to the interval $[-1,1]$ instead of the boundary curve. Then the functions $h_1^{-1},...,h_N^{-1}$ map each point in $B_0$ into $N$ points on the boundary of $K$. To compensate, the potentials $\Phi_n$ will map the cylinders whose generators have only entries from a subset of the alphabet $\{i-1, i\}$ into the points in $U_i$.

Suppose $i$ is any integer between 1 and $N$. We place two initial points in $B_0$ with coordinates $w_{0,1}=(-\frac12,0,0,...,0)$ and $w_{0,2}=(\frac12,0,0,...,0)$. The original cylinders of step zero will be $\C_{0,1}^{(i)}=\C(i-1)$ and $\C_{0,2}^{(i)}=\C(i)$ and we set $\Phi_0(\C_{0,1}^{(i)})=h_i^{-1}(w_{0,1})$,  $\Phi_0(\C_{0,2}^{(i)})=h_i^{-1}(w_{0,2})$. The original cylinders of step one $\C_{1,j}^{(i)}, \quad (j=1,2,3,4)$ are constructed in the same way as above using elements $i-1$ and $i$ instead of 0 and 1. To select the points in $B_0$ for this step we use the second coordinate $y_2$ and move to the "left" (negative direction of the $y_2$-axis) and to the "right" (positive direction) by $\frac12$. We obtain the following set of points in $B_0$:
$w_{1,1}=L_{0,1}=(-\frac12,-\frac12,0,...,0)$, $w_{1,2}=R_{0,1}=(-\frac12,\frac12,0,...,0)$, $w_{1,3}=L_{0,2}=(\frac12,-\frac12,0,...,0)$, $w_{1,4}=R_{0,2}=(\frac12,\frac12,0,...,0)$. Potential $\Phi_1$ is defined as in (\ref{phi_1}):
$$\Phi_1(x)=\begin{cases} h_i^{-1}(L_{0,1}),\quad if\,\, x\in\C(\pi_3\circ f^k\Or(i-1,i-1,i-1)),\quad k=0,1\\
                          h_i^{-1}(R_{0,1}),\quad if\,\, x\in\C(\pi_3\circ f^k\Or(i-1,i-1,i)),\quad k=0,1\\
                          h_i^{-1}(L_{0,2}),\quad if\,\, x\in\C(\pi_3\circ f^k\Or(i,i,i)),\quad k=0,1\\
                          h_i^{-1}(R_{0,2}),\quad if\,\, x\in\C(\pi_3\circ f^k\Or(i,i,i-1)),\quad k=0,1\\
                          \Phi_0(x),\quad \text{otherwise}
            \end{cases}
$$
Then we repeat the process with the next coordinate $y_3$ and use the original cylinders of step two to define $\Phi_3$. After we are finished with coordinate $m-1$ we start again with $y_1$, but now we shift to the left and right by an additional $\frac14$. Since the number of points in $B_0$ will double each time, we can use the original cylinders of the next step to define the next potential in the sequence. As in the proof of the previous theorem, we obtain a sequence of continuous potentials $\Phi_n$ such that
\begin{equation}
\|\Phi_{2(m-1)(n+1)}-\Phi_{2(m-1)n}\|_\infty\le L\cdot2\cdot\frac{11}{8}\cdot\frac{1}{2^n}.
\end{equation}
This estimate is true only for the subsequence $\Phi_{2(m-1)n}$ since we need $2(m-1)$ steps to reduce the distance between points in $B_0$ in half. We have also adjusted the quantity on the right hand side of condition (2) of Theorem \ref{thm1}. One change is the introduction of the Lipschitz constant $L$. The other change is that the length of the interval [-1,1] is two in contrast to the previous theorem, where we assumed a boundary of length one. The set of points
\begin{equation}
\{h_i^{-1}(w_{2(m-1)n,j}): \quad j=1,...,2^{2(m-1)(n+1)},\quad i=1,...,N \}
\end{equation}
 forms an $\frac{L}{2^n}$ - net on the boundary of $K$. The corresponding points\\  $h_i^{-1}(w_{2(m-1)n,j}^*)$ in $K$ satisfy condition (3) of Theorem \ref{thm1} up to the constant $2L$ on the right hand side of the inequality. Thus, the subsequence $\Phi_{2(m-1)n}$ converges uniformly to a continuous potential $\Phi$ and $\R(\Phi)=K$
\end{proof}

\section{Entropy of rotation sets}\label{press}
In this section we give an alternative definition for rotation sets and its associated entropy function which is more related to the  traditional
definition of topological entropy and does not make use of the variational principle.

Let
$(X,d)$ be a compact metric space, and let $f\colon
X\to X$ be  continuous.
 Let $\Phi=(\phi_1,\cdots,\phi_m)$ be a continuous ($m-$dimensional) potential. For $x\in X$ and $n\in \bN$ we define
the $m$-dimensional Birkhoff average   $\frac{1}{n}S_n\Phi(x)$ at $x$ of length $n$ with respect to $\Phi$ defined by
\begin{equation}\label{defSnm}
\frac{1}{n}S_n\Phi(x)=\left( \frac{1}{n}S_n\phi_1(x),\cdots,\frac{1}{n}S_n\phi_m(x)\right),
\end{equation}
where $\frac{1}{n}S_n\phi_i(x)=\frac{1}{n}\sum_{k=0}^{n-1} \phi_i(f^k(x))$.
Moreover, we define
\begin{multline}\label{defRf}
\R_{Pt}(f,\Phi)=\\
\left\{w\in \bR^m: \forall r>0\ \forall N\ \exists n\geq N\ \exists\ x\in X: \  \frac{1}{n}S_n\Phi(x)\in D(w,r)\right\}.
\end{multline}
Here $D(w,r)$ denotes the open Euclidean ball with center $w$ with radius $r$.
We frequently write $\R_{Pt}(f,\Phi)=\R_{Pt}(\Phi)$ when it is clear to which dynamical system $f$ we refer too. Recall the definition of the rotation set $\R(\Phi)$, see \eqref{defrotset}.
 We define
\begin{equation}
\R_E(\Phi)=\{\rv(\mu) :\mu\in \cM_E\}.
\end{equation}
We have the following.

\begin{proposition}\label{prop456}
Let $f:X\to X$ be a continuous map on a compact metric space and let $\Phi=(\phi_1,\cdots,\phi_m):X\to\bR^m$ be continuous. Then
\begin{equation}\label{eqinc1}
\R_E(\Phi)\subset \R_{Pt}(\Phi) \subset  \R(\Phi).
\end{equation}
\end{proposition}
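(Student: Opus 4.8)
The plan is to prove the two inclusions in \eqref{eqinc1} separately, the second being essentially immediate and the first requiring Birkhoff's ergodic theorem.

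\textbf{Second inclusion, $\R_{Pt}(\Phi)\subset\R(\Phi)$.} Let $w\in\R_{Pt}(\Phi)$. By the definition \eqref{defRf} there exist a sequence $r_l\downarrow 0$, integers $n_l\to\infty$ and points $x_l\in X$ with $\tfrac{1}{n_l}S_{n_l}\Phi(x_l)\in D(w,r_l)$; in particular $\tfrac{1}{n_l}S_{n_l}\Phi(x_l)\to w$. Consider the empirical measures $\mu_l=\tfrac{1}{n_l}\sum_{k=0}^{n_l-1}\delta_{f^k(x_l)}$. Since $\cM$ together with the weak$\ast$ topology is compact and metrizable, after passing to a subsequence we may assume $\mu_l\to\mu$ weak$\ast$ for some probability measure $\mu$ on $X$; a standard argument (using that $\tfrac{1}{n_l}(S_{n_l}(\phi\circ f)(x_l)-S_{n_l}\phi(x_l))\to 0$ uniformly for $\phi\in C(X,\bR)$) shows $\mu$ is $f$-invariant, i.e. $\mu\in\cM$. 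On the other hand $\int\phi_i\,d\mu_l=\tfrac{1}{n_l}S_{n_l}\phi_i(x_l)\to w_i$ for each $i$, while $\int\phi_i\,d\mu_l\to\int\phi_i\,d\mu$ by weak$\ast$ convergence; hence $\rv(\mu)=w$ and $w\in\R(\Phi)$.

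\textbf{First inclusion, $\R_E(\Phi)\subset\R_{Pt}(\Phi)$.} Let $w\in\R_E(\Phi)$, so $w=\rv(\mu)$ for some ergodic $\mu\in\cM_E$. Applying Birkhoff's Ergodic Theorem to each coordinate $\phi_i$, there is a set $\cB(\mu)$ of full $\mu$-measure such that $\tfrac{1}{n}S_n\phi_i(x)\to\int\phi_i\,d\mu=w_i$ as $n\to\infty$ for every $x\in\cB(\mu)$ and every $i=1,\dots,m$; since $\mu$ is a probability measure, $\cB(\mu)\neq\emptyset$. Fix any $x\in\cB(\mu)$. Then for every $r>0$ there is $N_0$ with $\tfrac{1}{n}S_n\Phi(x)\in D(w,r)$ for all $n\geq N_0$, which in particular verifies the defining condition of \eqref{defRf} (for every $N$ take $n=\max\{N,N_0\}$ with this single $x$). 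Hence $w\in\R_{Pt}(\Phi)$.

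Combining the two inclusions gives \eqref{eqinc1}. The only point requiring a little care is the $f$-invariance of the weak$\ast$ limit $\mu$ in the second inclusion, and even this is the textbook construction of invariant measures via Birkhoff/Kryloff--Bogolioubov averaging; I do not anticipate any genuine obstacle. $\endpf$
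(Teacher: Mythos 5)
Your proof is correct and follows essentially the same route as the paper: Birkhoff's Ergodic Theorem applied to a point in the basin of $\mu$ for the first inclusion, and the Krylov--Bogolyubov empirical-measure argument with a weak$\ast$ accumulation point for the second. One tiny imprecision: the compactness you invoke should be that of the space of \emph{all} Borel probability measures on $X$ (the empirical measures $\mu_l$ need not lie in $\cM$); invariance of the limit is then exactly the averaging argument you sketch.
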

\begin{proof}
Let $\mu\in \cM_E$ and $r>0$. Define $w=\rv(\mu)$. By Birkhoff's Ergodic Theorem, the basin of $\mu$
\begin{equation}
\cB(\mu)=\left\{x\in X:\ \frac{1}{n}\sum_{k=1}^{n-1} \delta_{f^k(x)} \to \mu\ {\text as}\ n\to\infty\right\},
\end{equation}
is a set of full $\mu$-measure, i.e. $\mu(\cB(\mu))=1$. By weak$*$ convergence, for all $x\in \cB(\mu)$ there exists $N=N(x)\in \bN$ such that $\frac{1}{n}S_n\Phi(x)\in D(w,r)$ for all $n\geq N$.
This proves the left-hand side inclusion in \eqref{eqinc1}.
\\
To prove the right-hand side inclusion in \eqref{eqinc1} let $w\in \R_{Pt}(\Phi)$ and consider sequences $(x_l)_{x\in \bN}\subset X$ and $n_x\in \bN, n_l\geq l$ such that
$S_{n_l}\Phi(x_l)\in D(w,\frac{1}{l})$. The existence of these sequences follows from the definition of $ \R_{Pt}(\Phi) $.
Define  probability measures
$\mu_{n_l}=\frac{1}{n_l}\sum_{k=0}^{n_l-1} \delta_{f^k(x_l)} .$ Hence $\rv(\mu_{n_l})\in D(w,1/l)$. Note that $\mu_{n_l}$ is in general not an invariant measure.
Since the space
of all Borel probability measures on $X$ is compact, there exists a Borel probability measure $\mu$ on $X$ which is a weak$\ast$  accumulation point of the measures $\mu_{n_l}$.
It now follows from similar arguments as in the proof of the Krylov-Bogolyubov Theorem  (see for example \cite{KH})  that $\mu$ is invariant. Moreover, by construction $\rv(\mu)= w$. This completes the proof.
\end{proof}

\begin{proposition}\label{prop24}
Let $f:X\to X$ be a continuous map on a compact metric space and let $\Phi=(\phi_1,\cdots,\phi_m):X\to\bR^m$ be continuous. Then
\begin{enumerate}
\item[(i)]
Both, $\R_{Pt}(\Phi)$ and $\R(\Phi)$   are compact and $\R(\Phi)$ is convex;
\item[(ii)]
If for all $w\in \R(\Phi)$ and all $r>0$ exists $\mu\in\cM_E$ such that $\rv(\mu)\in D(w,r)$ then $\R_{Pt}(\Phi) = \R(\Phi)$.
In particular, if $\overline{\cM_E}=\cM$ then  $\R_{Pt}(\Phi) = \R(\Phi)$.
\end{enumerate}
\end{proposition}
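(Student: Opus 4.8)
The plan is to deduce both parts from the structure of $\cM$ and Birkhoff's ergodic theorem, reusing the mechanism already set up in the proof of Proposition \ref{prop456}. For part (i), $\R(\Phi)$ is compact and convex because $\cM$ is compact and convex in the weak$\ast$ topology and $\mu\mapsto\rv(\mu)$ is continuous and affine, so $\R(\Phi)$ is a continuous affine image of a compact convex set. For $\R_{Pt}(\Phi)$ the only issue is closedness, since Proposition \ref{prop456} already places it inside the compact set $\R(\Phi)$. To show it is closed I would take $w_k\to w$ with $w_k\in\R_{Pt}(\Phi)$, fix $r>0$ and $N\in\bN$, choose $k$ with $\|w_k-w\|<r/2$, and use the defining property of $w_k$ to produce $n\geq N$ and $x\in X$ with $\frac{1}{n}S_n\Phi(x)\in D(w_k,r/2)\subset D(w,r)$; this gives $w\in\R_{Pt}(\Phi)$. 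A closed subset of a compact set is compact, so $\R_{Pt}(\Phi)$ is compact.

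For part (ii), since $\R_{Pt}(\Phi)\subset\R(\Phi)$ always holds by Proposition \ref{prop456}, it remains to prove $\R(\Phi)\subset\R_{Pt}(\Phi)$ under the hypothesis. Given $w\in\R(\Phi)$, $r>0$ and $N\in\bN$, I would pick $\mu\in\cM_E$ with $\rv(\mu)\in D(w,r/2)$ using the hypothesis, and then argue exactly as in the proof of Proposition \ref{prop456}: by Birkhoff's ergodic theorem the basin $\cB(\mu)$ has full $\mu$-measure, hence is nonempty, and for $x\in\cB(\mu)$ one has $\frac{1}{n}S_n\Phi(x)\to\rv(\mu)$, so for all sufficiently large $n$, in particular for some $n\geq N$, we get $\frac{1}{n}S_n\Phi(x)\in D(\rv(\mu),r/2)\subset D(w,r)$. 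Hence $w\in\R_{Pt}(\Phi)$. For the final assertion, if $\overline{\cM_E}=\cM$ then every $w\in\R(\Phi)$ equals $\rv(\nu)$ for some $\nu\in\cM$, and approximating $\nu$ weakly by ergodic measures and invoking continuity of $\rv$ yields ergodic measures with rotation vectors arbitrarily close to $w$, so the hypothesis of (ii) is satisfied and $\R_{Pt}(\Phi)=\R(\Phi)$.

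I do not anticipate a genuine obstacle here. The only step requiring slight care is the closedness of $\R_{Pt}(\Phi)$, which the nested-ball argument above handles, together with the observation that the ``sufficiently large $n$'' appearing in part (ii) can always be chosen $\geq N$ since the Birkhoff averages along $x\in\cB(\mu)$ converge.
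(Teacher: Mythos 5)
Your proof is correct and follows essentially the same route as the paper: compactness and convexity of $\R(\Phi)$ from the corresponding properties of $\cM$, closedness of $\R_{Pt}(\Phi)$ from the definition, and part (ii) via Birkhoff's theorem applied to a nearby ergodic measure. The only cosmetic difference is that for (ii) the paper simply notes the hypothesis gives $\overline{\R_E(\Phi)}=\R(\Phi)$ and then combines the inclusion chain of Proposition \ref{prop456} with the closedness established in (i), whereas you verify the defining condition of $\R_{Pt}(\Phi)$ directly for each $w$ --- which amounts to unrolling the same two ingredients.
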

\begin{proof}
As stated before, the  weak$*$ compactness and convexity of $\cM$ implies that $\R(\Phi)$ is compact and convex.
The statement that $\R_{Pt}(\Phi)$ is closed follows directly from the definition. This proves (i).\\
Suppose for all $w\in \R(\Phi)$ and all $r>0$ exists $\mu\in\cM_E$ such that $\rv(\mu)\in D(w,r)$. Then $\overline{\R_E(\Phi)}=\R(\Phi)$, and therefore (ii) is a consequence of \eqref{eqinc1} and (i).
\end{proof}
 \noindent
{\rm Remark.\ } As a consequence of Proposition \ref{prop24} (ii)
we obtain that $\R_{Pt}(\Phi) = \R(\Phi)$ for topological mixing subshifts of finite type, Axiom A basic sets and expansive systems with specification since  in these cases
$\overline{\cM_E}=\cM$ holds.
\\[0.2cm]
\noindent
The following examples show that both of the inclusions \eqref{eqinc1} can be strict.

\begin{example}\label{ex1}
Let $a,b,c,d\in \bR$ with $a<b<c<d$. Let $X=[a,b]\cup [c,d]$ and $f:X\to X$ be continuous with $f([a,b])\subset [a,b]$ and $f([c,d])\subset [c,d]$.
Moreover, we assume that $f(a)=a$ and $f(d)=d$. Consider the potential $\Phi={\rm id}_X$. The fact that $[a,b]$ and $[c,d]$ are both invariant sets of $f$ implies that $ \R_{Pt}(\Phi)\subset X$. On the other hand, since $\delta_a,\delta_d\in \cM$ the  convexity of $\R(\Phi)$ implies $\R(\Phi)=[a,d]$.
In particular,  the inclusion $\R_{Pt}(\Phi) \subset  \R(\Phi)$ is strict.
\end{example}
\begin{example}\label{ex2}
Let $d\geq 6$ be even and  let $f:X\to X$ be the one-sided full shift  on the alphabet $\{0,\cdots, d-1\}$.
Let $K$ be a compact and convex subset of $\bR^2$ whose boundary is a closed polygon with vertices  $w_1,\cdots, w_{d/2}$ which we label counter clock-wise. Since there are at least $3$ vertices $w_i$ the set $K$ has non-empty interior.
Pick $w_0$ in the interior of $K$.
Let $l_1,\cdots, l_{d/2}$ be the line segments joining $w_0$ and $w_i$ endowed with the canonical order induced by  $w_0<w_i$.
For each $i=1,\cdots, d/2$ we pick a strictly  increasing sequence $(w_i(k))_{k\in \bN}\subset {\rm int}\ l_i$ with $|w_i(k)-w_i|<1/2^k$, in particular $\lim_{k\to \infty} w_i(k)=w_i$.
We also write $w_i(0)=w_0$.

Next, we define several subsets of $X$.
Let $S_1=\{0,1\}, \cdots, S_{d/2}=\{d-2,d-1\}$ and fix $ \alpha\in \bN, \alpha\geq 3$.
For all $i=1,\cdots, d/2$ and all $k\geq \alpha$ we define $X_i(k)=\{x \in X: x_1,\cdots, x_{k}\in S_i\}$. Moreover, let $X_0(\alpha)= X\setminus (X_1(\alpha)\cup \cdots \cup X_{d/2}(\alpha))$.
\\[0.2cm]
\noindent
Finally, we define a potential $\Phi: X\to \bR^2$ by
\begin{equation}
\Phi(x)=\begin{cases}
w_0\qquad & if\,\,   x\in X_0(\alpha)\\
                         w_{i}(k-\alpha) & if\,\,  x\in X_{i}(k)\ {\rm and}\ x\not\in X_i(k+1)\\
                          w_i & if\,\,   \  x\in X_{i}(k)\  {\rm for\, all }\, k\geq \alpha
            \end{cases}
\end{equation}
Note that $\Phi(x)=w_i$ if and only if $x_k\in \{2i-2,2i-1\}$ for all $k\in \bN$, in particular $f|_{\Phi^{-1}(w_i)}$ is conjugate to a full shift in $2$ symbols.
\end{example}
\begin{figure}[h]
\begin{center}
\psfragscanon
\psfrag{0}[c][l]{\LARGE{$w_0$}}
\psfrag{1}[c][l]{\LARGE{$w_1$}}
\psfrag{2}[c][l]{\LARGE{$w_2$}}
\psfrag{3}[c][l]{\LARGE{$w_3$}}
\psfrag{4}[c][l]{\LARGE{$w_4$}}
\psfrag{5}[c][l]{\LARGE{$w_5$}}
\psfrag{k}[l][l]{\LARGE{$\{w_1(k)\}$}}

\scalebox{0.65} {
\includegraphics{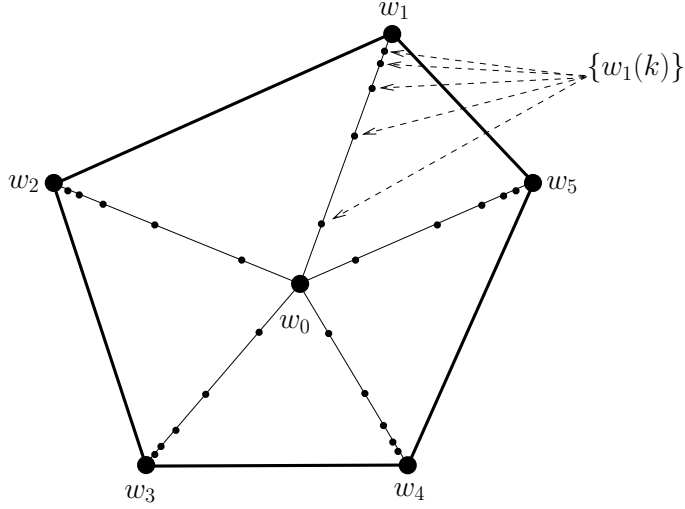}
}
\caption{This figure illustrates Example \ref{ex2}}
\end{center}
\end{figure}

We now  list several properties of the system in example \ref{ex2}.
\begin{theorem}
Let $X,f$ and $\Phi$ be as in example $2$. Then
\begin{enumerate}
\item[(i)] $\Phi$ is Lipschitz continuous;
\item[(ii)] $\R(\Phi)=K$;
\item[(iii)]  $\R_{Pt}(\Phi)=\R(\Phi)$;
\item[(iv)] $\R_E(\Phi)={\rm int }\ K\cup \{w_1,\cdots, w_{d/2}\}$, in particular $$\R_E(\Phi)\cap \left(\partial K\setminus \{w_1,\cdots, w_{d/2}\}\right)=\emptyset,$$ and therefore
the inclusion $\R_E(\Phi)\subset \R_{Pt}(\Phi)$ is strict;
\item[(v)] $H({\rm int }\ K)=(\log 2, \log d]$ and $H(\partial K)=\log 2$.
\end{enumerate}
\end{theorem}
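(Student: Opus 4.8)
The five claims each refer back to the structure of Example 2, so the strategy is to exploit the decomposition $X = X_0(\alpha) \cup X_1(\alpha) \cup \cdots \cup X_{d/2}(\alpha)$ together with the observation that $f|_{\Phi^{-1}(w_i)}$ is conjugate to the full shift on two symbols. I would organize the argument by first pinning down the range of $\Phi$ (which feeds into (i) and (ii)), then handling the measure-theoretic claims (ii)--(iv), and finally the entropy computation (v).

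For \textbf{(i)}, I would check that $\Phi$ is locally constant away from the sets $\Phi^{-1}(w_i)$ and that near those sets the values $w_i(k-\alpha)$ converge geometrically to $w_i$ at rate comparable to $2^{-k}$, matching the decay of the metric $d_\alpha$ on cylinders of length $k$; choosing the metric with an appropriate base $\alpha_0 \in (0,1)$ (or simply noting $|w_i(k) - w_i| < 2^{-k}$ from the construction) gives a Lipschitz bound. For \textbf{(ii)}, the inclusion $\R(\Phi) \subseteq K$ is immediate since $\Phi$ takes values in $K$ and $K$ is convex and closed, so $\rv(\mu) \in K$ for every $\mu$; for the reverse inclusion I would produce, for each vertex $w_i$, the invariant measures supported on $\Phi^{-1}(w_i)$ (giving $w_i \in \R(\Phi)$), produce $w_0$ via the Dirac mass at the fixed point $0^\infty \in X_0(\alpha)$ or any invariant measure inside $X_0(\alpha)$, and then use convexity of $\R(\Phi)$ to fill in $\mathrm{conv}\{w_0, w_1, \dots, w_{d/2}\} = K$. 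Claim \textbf{(iii)} then follows from Proposition \ref{prop24}(ii) provided I can show every point of $K$ is approximated by rotation vectors of ergodic measures, which is part of what (iv) establishes; alternatively I can verify $\overline{\cM_E} = \cM$ directly for the full shift and invoke the Remark after Proposition \ref{prop24}.

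For \textbf{(iv)} I would argue both inclusions. If $\mu \in \cM_E$, then ergodicity forces $\mu$ to be concentrated on one of the invariant pieces: either $\mu(X_i(k)) = 1$ for all $k$ for some single $i$ (so $\mu$ lives on $\Phi^{-1}(w_i)$ and $\rv(\mu) = w_i$), or $\mu$ gives positive mass to $X_0(\alpha)$, in which case by ergodicity $\mu$-a.e.\ orbit spends a definite fraction of time in $X_0(\alpha)$ where $\Phi = w_0$, and a fraction near each vertex — the key point being that $w_0$ lies in the \emph{interior} and the convex combination of $w_0$ with anything in $K$ stays off $\partial K \setminus \{w_1,\dots,w_{d/2}\}$; I would need to check that such convex combinations actually sweep out all of $\mathrm{int}\,K$, which is true because one can concatenate long blocks in $X_0(\alpha)$ with long blocks in each $S_i^{\mathbb N}$ in prescribed proportions and take ergodic components / generic measures of suitable subshifts. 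Conversely, no ergodic measure can have rotation vector on $\partial K \setminus \{w_i\}$: such a point $w$ lies on an open edge, hence is extreme in a supporting half-plane $\{L \le c\}$ with $L \circ \Phi \le c$ and $L \circ \Phi = c$ only on $\bigcup_i \Phi^{-1}(w_{j(i)})$ for the two vertices on that edge — but $\{L \circ \Phi = c\}$ is then a disjoint union of two full-shift pieces, and any invariant measure supported there with $\rv(\mu) = w$ strictly between the two vertices is a nontrivial convex combination, contradicting ergodicity.

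The \textbf{main obstacle} will be \textbf{(v)}, the entropy computation, specifically showing $H(w) > \log 2$ for every $w \in \mathrm{int}\,K$ with supremum exactly $\log d$ attained on the interior, while $H(w) = \log 2$ on the boundary. For the boundary: a measure with $\rv(\mu) = w_i$ must be supported on $\Phi^{-1}(w_i) \cong \{0,1\}^{\mathbb N}$, so $h_\mu(f) \le \log 2$, with equality for the Bernoulli$(1/2,1/2)$ measure there; for $w$ on an open edge the same full-shift-on-two-symbols bound applies via the supporting-hyperplane argument of (iv), giving $H(w) = \log 2$ again. For the interior: given $w \in \mathrm{int}\,K$, I would build measures of entropy close to $\log d$ with rotation vector $w$ by taking, on a large word length $N$, the uniform measure on all words and then perturbing the frequencies of the prefix-patterns that determine which $X_i(k)$ a point enters — since entering $X_i(\alpha)$ only constrains the first $\alpha$ coordinates (a bounded perturbation), one can steer $\rv(\mu)$ to any interior point while keeping entropy $\ge \log d - o(1)$; combined with the fact that some interior points (e.g.\ near $w_0$, realized by measures charging $X_0(\alpha)$ heavily where $f$ is essentially a full shift on $d$ symbols) have $H = \log d$, and upper semicontinuity plus the obvious bound $H(w) \le h_{\mathrm{top}}(f) = \log d$, one gets $H(\mathrm{int}\,K) = (\log 2, \log d]$. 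The delicate part is the lower bound $H(w) > \log 2$ \emph{strictly} for \emph{every} interior point, including points very close to the boundary — here I would use that any $w \in \mathrm{int}\,K$ can be written as $w = t\,w_0 + (1-t)\,v$ with $t > 0$ and $v \in \partial K$, realize $v$ (or a nearby interior point) by a measure $\nu$ of entropy $\ge \log 2$, realize $w_0$ by a full-shift measure $\nu_0$ of entropy $\log d$ supported in $X_0(\alpha)$, and form a measure whose ergodic decomposition mixes these, so $h_\mu(f) \ge t \log d + (1-t)\log 2 > \log 2$ by affinity of entropy; making the rotation vector land exactly on $w$ (not just near it) requires the convexity/continuity bookkeeping that I would carry out carefully but which presents no conceptual difficulty.
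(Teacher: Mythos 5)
Your overall architecture matches the paper's for (i), (ii), (iii), the exclusion half of (iv), and (v), but there is one genuine gap and a couple of structural misreadings of Example 2 that propagate into your argument. The gap is the inclusion ${\rm int}\, K\subset \R_E(\Phi)$ in (iv). Your proposed construction --- ``concatenate long blocks in $X_0(\alpha)$ with long blocks in each $S_i^{\mathbb N}$ in prescribed proportions and take ergodic components / generic measures of suitable subshifts'' --- does not produce an ergodic measure with rotation vector \emph{exactly} equal to a prescribed interior point $w$: the generic measure of a concatenated point need not be ergodic, and passing to ergodic components destroys the rotation vector (the components have rotation vectors scattered around $w$, not equal to it). Density of $\R_E(\Phi)$ in $K$ is easy; exact realization of every interior point is not. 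The paper handles this by a completely different mechanism: part (i) shows $\Phi$ is Lipschitz, hence H\"older, the full shift has \STP, and then Corollary \ref{corintemp} (via Jenkinson's results on equilibrium states of $T\cdot\Phi$) gives $\{\rv(\mu_T):T\in\bR^m\}={\rm int}\,\R(\Phi)$ with each $\mu_T$ ergodic. If you want to avoid the thermodynamic formalism you would need something like a unique-ergodicity construction with controlled Birkhoff averages, which is genuinely more work than your sketch suggests.

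Two concrete slips about the example itself. First, $0^\infty\notin X_0(\alpha)$: all its coordinates lie in $S_1=\{0,1\}$, so $\Phi(0^\infty)=w_1$, not $w_0$ (harmless for (ii), since Krein--Milman with the vertices alone already gives $K\subset\R(\Phi)$, which is the paper's route). Second, and more importantly for (v), there is no invariant measure of entropy $\log d$ ``supported in $X_0(\alpha)$'': that set is not invariant, and the unique measure of maximal entropy is the uniform Bernoulli measure $\mu_0$, which charges every $X_i(k)$. The correct version of your mixing argument (and the paper's) uses $\mu_0$ itself: $\rv(\mu_0)\in{\rm int}\,K$ because $\mu_0(X_0(\alpha))>0$ forces $\rv(\mu_0)$ to be a nontrivial convex combination of $w_0$ with a point of $K$; then for any $w\in{\rm int}\,K$ one writes $w=t\,\rv(\mu_0)+(1-t)\tilde w$ with $t>0$ and $\tilde w\in\partial K$, takes $\tilde\mu$ with $\rv(\tilde\mu)=\tilde w$ and $h_{\tilde\mu}(f)=\log 2$, and applies affinity of entropy to $t\mu_0+(1-t)\tilde\mu$. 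With that substitution your boundary analysis (supporting hyperplanes at vertices and faces forcing support in one or two copies of the $2$-shift) and the final continuity-plus-connectedness step agree with the paper. Your observation that an ergodic measure not supported on some $\Phi^{-1}(w_i)$ must satisfy $\mu(X_0(\alpha))>0$, hence has rotation vector in ${\rm int}\,K$, is a clean and correct alternative to the paper's face argument for the exclusion part of (iv).
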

\begin{proof}
{\rm (i) } We will work with the $d_{1/2}$ metric (see \eqref{defmet}) on $X$ to show that $\Phi$ is Lipschitz continuous. Set $C={\rm diam}(K)$. Let $x,y\in X$ with $\Phi(x)\not=\Phi(y)$. If $\Phi(x)=w_0$
then $x_k\not=y_k$ for some $k\leq \alpha$. Hence,
\begin{equation}\label{eqwsx}
\|\Phi(x)-\Phi(y)\|_2\leq C = C 2^\alpha \frac{1}{2^\alpha} \leq C 2^\alpha d(x,y)
\end{equation}
The case $\Phi(y)=w_0$ is analogous. The case $\Phi(x)\in l_i\setminus \{w_0\}$ and $\Phi(y)\in l_j\setminus \{w_0\}$ with $i\not=j$ can be treated analogously as in \eqref{eqwsx}.
It remains to consider the case $\Phi(x),\Phi(y)\in l_i\setminus \{w_0\}$ for some $i=1,\cdots,d/2$. Without loss of generality we assume $\Phi(x)<\Phi(y)$ (with respect the canonical order on $l_i$.
Thus, $d(x,y)\geq\frac{1}{2^{k_x+1}}$ where $k_x$ is defined by $\Phi(x)=w_i(k_x)$. Since $|w_i(k)-w_i|<1/2^k$ for all $k \in \bN$ we conclude that
\begin{equation}\label{eqwsx1}
\|\Phi(x)-\Phi(y)\|_2\leq \frac{1}{2^{k_x}} \leq 2 d(x,y),
\end{equation}
which completes the proof of (i).\\
\noindent
(ii) Since $\Phi(X)\subset K$ and since $K$ is compact and convex, it follows that $\R(\Phi)\subset K$. We have $\{w_1,\cdots,w_{d/2}\}\subset K$. This follows from the fact that for each $i$ there exists a fixed point $x(i)\in X$
with $\Phi(x(i))=w_i$. Using that $w_1,\cdots,w_{d/2}$ are the extreme points of $K$ and that $\R(\Phi)$ is compact and convex the inclusion $K\subset \R(\Phi)$ follows from the Krein-Milman theorem.\\
\noindent
(iii)
By a result Sigmund (see \cite{DGS}) applied to one-sided full shifts the set of periodic point measures (see \eqref{defpermes} for the definition) is weak$*$ dense in $\cM$. Therefore, (iii) follows from Proposition
\ref{prop24} (ii). \\
\noindent
(iv) We already have shown in the proof of (ii) that $\{w_1,\cdots,w_{d/2}\}\subset \R_E(\Phi)$. The statement ${\rm int }\ K \subset \R_E(\Phi)$ will be proven in Corollary \ref{corintemp}
by using the thermodynamic formalism. Consider $w\in \partial K\setminus \{w_1,\cdots,w_{d/2}\}$. To prove (iv) we have to show that $w\not\in \R_E(\Phi)$. Let $i,j\in \{1,\cdots,d/2\}$ such that $w$ lies on the interior of the line segment $[w_i,w_j]$ joining $w_i$ and $w_j$. By construction of $K$ the line segment $[w_i,w_j]$ is a face of $K$ and since $\Phi(X)\cap (w_i,w_j)=\emptyset$ each invariant measure $\mu$ with $\rv(\mu)=w$ must put positive measure on each of the $f$-invariant sets $\Phi^{-1}(w_i)$ and $\Phi^{-1}(w_j)$. This implies that
$\mu$ is not ergodic.\\
\noindent
(v) Clearly, $H(w_i)=\log 2$ for all $i\in \{1,\cdots,d/2\}$. Let now $w\in \partial K\setminus \{w_1,\cdots,w_{d/2}\}$. It follows
from a similar argument as in the proof of (iv) that each $\mu\in \cM$ with $\rv(\mu)=w$ must be a convex combination of invariant measures $\mu_1,\mu_2$ both of which have rotation vectors
in $\{w_1,\cdots,w_{d/2}\}$. Therefore, $H(\partial K)=\log 2$ follows from the fact that the measure-theoretic entropy is affine.
Next, we consider $w\in {\rm int }\ K$. Let $\mu_0\in \cM$ be the unique measure of maximal entropy of $f$, i.e., the unique invariant measure satisfying $h_{\mu_0}(f)=\log d$. It follows from the construction
that $\rv(\mu_0)=w_0$. Therefore, it suffices to consider $w\not=w_0$.
Let $\tilde{w}$ be the unique point on $\partial K$ such that $w$ lies on the interior of the line segment $[w_0,\tilde{w}]$. Let $\tilde{\mu}\in\cM$ such that $\rv(\tilde{\mu})=\tilde{w}$
and $h_{\tilde{\mu}}(f)=\log 2$. Let $t\in (0,1)$ such that  for $\mu=t\mu_0+(1-t)\tilde{\mu}$ we have $\rv(\mu)=w$. Since the measure-theoretic entropy is affine we conclude that $h_{\mu}(f)>\log 2$.
Finally,  $H({\rm int }\ K)=(\log 2, \log d]$ follows from the continuity of $w\mapsto H(w)$ and from the fact that ${\rm int}\ K$ is connected.
\end{proof}

\noindent
{\it  Remark. } By property {\rm (v)}, the entropy $H(w)$ is on the boundary of $K$ strictly smaller than in the interior of $K$. This is however in general not true. Indeed, by slightly modifying
example $\rm 2$ and concentrating a shift with more then $2$ symbols on one of the points $w_i$, the continuity of $w\mapsto H(w)$ implies that $\max_{w\in \partial K} H(w)>\inf_{w\in {\rm int } K} H(w)$. \\[0.2cm]
\noindent
Next, we introduce an alternative definition for a entropy function which is closely related to the traditional definition of topological entropy.
Recall the definition of the \emph{topological entropy} of $f$ (see \eqref{defdru}),
\begin{equation}\label{defent}
  h_{\rm top}(f) \eqdef \lim_{\varepsilon \to 0}
            \limsup_{n\to \infty}
            \frac{1}{n} \log  \car \ F_n(\epsilon),
\end{equation}
where $ F_n(\epsilon)$ is a maximal $(n,\epsilon)$-separated set.
The topological entropy satisfies the variational principle (which is a special case of the variational principle for the topological pressure \eqref{eqvarpri}):
\begin{equation}\label{eqvarprient}
h_{\rm top}(f)= \sup_{\mu\in \cM} h_\mu(f).
\end{equation}
Furthermore, the supremum in~\eqref{eqvarprient} can be replaced by
the supremum taken only over all $\mu\in\cM_{\rm E}$. We denote by
$E_{\rm max}(f)$ the set of all measures of maximal entropy, that is the set of measures
$\mu\in\cM$ which attain the supremum in~\eqref{eqvarpri}. In general $E_{\rm max}(f)$ may be empty (see for example \cite{Mi2}).

Fix $w\in \R_{Pt}(\Phi)$.
Let $n\in \bN$ and $\epsilon,r>0$.  We say $F\subset X$ is  a $(n,\epsilon,w,r)$-set if $F$ is $(n,\epsilon)$-separated and
$\frac{1}{n}S_n\Phi(x)\in D(w,r)$ for all $x\in F$. For all $n\in \bN$ and $\epsilon,r>0$  we pick a maximal (with respect to the inclusion)
$(n,\epsilon,w,r)$-set $F_n(\epsilon,w,r)$ and define
\begin{equation}\label{defhw}
h(\epsilon,w,r)=\limsup_{n\to\infty}\frac{1}{n} \log \car \ F_n(\epsilon,w,r)
\end{equation}
and
\begin{equation}\label{eqdefhw234}
h(w)=\lim_{r\to 0} \lim_{\epsilon\to 0} h(\epsilon,w,r)
\end{equation}
Analogous as in the case of $h_{\rm top}(f)$ one can show that $h(w)$ does not depend on the choice of the $(n,\epsilon,w,r)$-sets $F_n(\epsilon,w,r)$. Clearly, $h(w)$ and $h(\epsilon,w,r)$ are bounded above by $h_{\rm top}(f)$ and therefore finite.

We now review a standard construction of invariant measures with large entropy.
Given a finite set $F\subset X$ we  define  a probability measure $ \sigma(F)$ by
\begin{equation}\label{tilsig}
 \sigma(F)=
\frac{1}{\car\ F}
\sum_{x\in F} \delta_x.
\end{equation}

Recall that for a Borel map $g:X\to X$ and a Borel measure $\mu$ on $X$ the push forward of $\mu$ is defined by $g_*\mu(A)=\mu(g^{-1}(A))$.

We will need the  following result, which is typically shown by using the Misiurewicz argument when proving the variational principle (see, for example,~\cite[Section 4.5]{KH}).

\begin{lemma}\label{lemKH}
 Let $f:X\to X$ be a continuous map on a compact metric space and let $\epsilon>0$.
 Let $(F_n)_{n\in \bN}$ be a sequence of  $(n,\epsilon)$-separated sets in $X$, and define the measures
\begin{equation}\label{defmuin}
\nu_n= \sigma(F_n),
\quad
\mu_n=\frac{1}{n}\sum_{k=0}^{n-1}f^k_*\nu_n.
\end{equation}

Then there exists a weak$\ast$ accumulation point $\mu$ of the measures $(\mu_n)_{n\in\bN}$ and any such accumulation point $\mu$ is invariant  and satisfies
\begin{equation}\label{lemmis}
 \limsup_{n\to \infty}\frac{1}{n}\log\sum_{x\in F_n}\car\ F_n
\leq h_\mu(f).
\end{equation}
\end{lemma}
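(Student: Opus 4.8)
\emph{Overall approach.} This is the measure-theoretic heart of the variational principle, and the plan is to run the classical Misiurewicz blocking argument. I must produce an invariant weak$\ast$ accumulation point $\mu$ of $(\mu_n)$ with $\limsup_{n\to\infty}\frac1n\log\#F_n\le h_\mu(f)$. (Invariance will in fact hold for \emph{every} weak$\ast$ accumulation point; the entropy bound, as I prove it, will hold for the particular one selected below, which is what the applications need.) Set $a\eqdef\limsup_{n\to\infty}\frac1n\log\#F_n$. Since the space of Borel probability measures on the compact metric space $X$ is sequentially compact for the weak$\ast$ topology, I would first pass to a subsequence $(n_i)$ with $\frac{1}{n_i}\log\#F_{n_i}\to a$, and then to a further subsequence along which $\mu_{n_i}\to\mu$ weak$\ast$. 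Invariance of any such $\mu$ is the Krylov--Bogolyubov computation: $f_*\mu_n-\mu_n=\frac1n(f^n_*\nu_n-\nu_n)$ has total mass $\le 2/n$, so $\int g\circ f\,d\mu=\int g\,d\mu$ for all $g\in C(X,\bR)$.

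\emph{Choice of partition.} Next I would fix a finite Borel partition $\xi=\{A_1,\dots,A_k\}$ of $X$ with $\diam A_i<\epsilon$ and $\mu(\partial A_i)=0$ for all $i$. Such a $\xi$ exists: cover $X$ by finitely many open balls of radius $<\epsilon/2$ whose bounding spheres are $\mu$-null (all but countably many radii about a fixed center have this property), and let $\xi$ be the partition they generate; its atoms have diameter $<\epsilon$ and boundaries contained in a finite union of $\mu$-null spheres. Two facts then drive the proof. First, because $\diam A_i<\epsilon$ while $F_n$ is $(n,\epsilon)$-separated, two distinct points of $F_n$ cannot lie in a single atom of $\xi_n\eqdef\bigvee_{j=0}^{n-1}f^{-j}\xi$; since $\nu_n$ is the normalized counting measure on $F_n$ this gives $H_{\nu_n}(\xi_n)=\log\#F_n$. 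Second, using invariance of $\mu$ and $\partial(f^{-j}A)\subseteq f^{-j}(\partial A)$, one gets $\mu(\partial C)=0$ for every atom $C$ of $\xi_q$ (each fixed $q$), hence $\mu_{n_i}(C)\to\mu(C)$ by the portmanteau theorem and $H_{\mu_{n_i}}(\xi_q)\to H_\mu(\xi_q)$ by continuity of $t\mapsto -t\log t$.

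\emph{The blocking estimate.} Then I would carry out the standard combinatorial step. Fix $1\le q<n$. For each $0\le j<q$, decompose $\{0,\dots,n-1\}$ into the progression $\{j+sq:0\le s<a(j)\}$ with $a(j)=\lfloor(n-j)/q\rfloor$, plus fewer than $2q$ leftover indices; correspondingly $\xi_n$ is the join of the $a(j)$ partitions $f^{-(j+sq)}\xi_q$ with $<2q$ one-coordinate partitions, so subadditivity of static entropy yields $H_{\nu_n}(\xi_n)\le\sum_{s=0}^{a(j)-1}H_{f^{j+sq}_*\nu_n}(\xi_q)+2q\log k$. Summing over $j=0,\dots,q-1$ (the progressions for distinct $j$ are disjoint subsets of $\{0,\dots,n-1\}$, and each omitted $H_{f^m_*\nu_n}(\xi_q)$ is $\ge 0$), then invoking concavity of $\eta\mapsto H_\eta(\xi_q)$ together with $\mu_n=\frac1n\sum_{m=0}^{n-1}f^m_*\nu_n$, I obtain
\[
\frac{q}{n}\log\#F_n=\frac{q}{n}H_{\nu_n}(\xi_n)\le H_{\mu_n}(\xi_q)+\frac{2q^2\log k}{n}.
\]
Along $n=n_i\to\infty$ the error term vanishes and the left side tends to $q\,a$ (using the second fact above on the right), so $a\le\frac1q H_\mu(\xi_q)$; sending $q\to\infty$ gives $a\le h_\mu(f,\xi)\le h_\mu(f)$, as required.

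\emph{Main obstacle.} The only genuinely delicate part will be the index bookkeeping in the decomposition of $\xi_n$: verifying that the progressions $\{j+sq\}$, $0\le j<q$, are pairwise disjoint, are contained in $\{0,\dots,n-1\}$, and together omit fewer than $2q$ coordinates, so that the blocking error is $O(q^2/n)$ and disappears in the limit. Everything else---existence and invariance of $\mu$, the construction of $\xi$, the subadditivity and concavity of static entropy, and the two convergence facts---is standard.
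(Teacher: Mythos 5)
Your proof is correct and is precisely the classical Misiurewicz blocking argument that the paper itself invokes (it gives no proof, citing \cite[Section 4.5]{KH} instead), so there is nothing to compare beyond noting you have written out the cited argument in full. Your remark that the entropy bound is established for the accumulation point chosen along a subsequence realizing the $\limsup$ (rather than for an arbitrary accumulation point, as the lemma's wording literally claims) is a fair and correct reading of what the standard argument delivers and of what the applications in Proposition \ref{propungleich} and Theorem \ref{theoperrot} actually use.
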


The next result establishes an inequality between $h(w)$ and $H(w)$.

\begin{proposition}\label{propungleich}
Let $f:X\to X$ be a continuous map on a compact metric space, let $\Phi=(\phi_1,\cdots,\phi_m):X\to \bR^m$ be continuous and let $w\in \R_{Pt}(\Phi)$.
Suppose $H$ is continuous at $w$.
Then $h(w)\leq H(w)$.
\end{proposition}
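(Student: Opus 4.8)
The plan is to realize the quantity $h(\epsilon,w,r)$ as a lower bound for the measure-theoretic entropy of an invariant measure whose rotation vector is forced to lie in $\overline{D(w,r)}$, and then to let $r\to 0$, using the continuity of $H$ at $w$ to identify the limit with $H(w)$. Concretely, I would fix $r>0$ and $\epsilon>0$. Since $w\in\R_{Pt}(\Phi)$ (see \eqref{defRf}), for infinitely many $n$ the set $F_n(\epsilon,w,r)$ is non-empty, and these sets are $(n,\epsilon)$-separated by construction. Passing to a subsequence of such indices $n$ along which $\frac{1}{n}\log\car F_n(\epsilon,w,r)$ tends to $h(\epsilon,w,r)$ (recall \eqref{defhw}), I would apply Lemma \ref{lemKH} with $\nu_n=\sigma(F_n(\epsilon,w,r))$ and $\mu_n=\frac{1}{n}\sum_{k=0}^{n-1}f^k_*\nu_n$. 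This produces an invariant weak$\ast$ accumulation point $\mu=\mu_{\epsilon,r}$ of $(\mu_n)$ with $h(\epsilon,w,r)\le h_{\mu_{\epsilon,r}}(f)$.

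The next step is to control $\rv(\mu_{\epsilon,r})$. Unwinding the definitions,
\[
\rv(\mu_n)=\int \frac{1}{n} S_n\Phi\,d\nu_n=\frac{1}{\car F_n(\epsilon,w,r)}\sum_{x\in F_n(\epsilon,w,r)}\frac{1}{n} S_n\Phi(x),
\]
which is a convex combination of points of $D(w,r)$ and hence, by convexity of balls, an element of $D(w,r)\subset\overline{D(w,r)}$. Since $\nu\mapsto\rv(\nu)$ is weak$\ast$ continuous and $\overline{D(w,r)}$ is closed, we get $\rv(\mu_{\epsilon,r})\in\overline{D(w,r)}$. As $\rv(\mu_{\epsilon,r})$ is the rotation vector of an invariant measure it lies in $\R(\Phi)$, so the definition of $H$ in \eqref{defH} yields
\[
h(\epsilon,w,r)\ \le\ h_{\mu_{\epsilon,r}}(f)\ \le\ H(\rv(\mu_{\epsilon,r})).
\]

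Finally I would shrink the parameters. Given $\eta>0$, continuity of $H$ at $w$ provides $\delta>0$ such that $H(v)<H(w)+\eta$ for every $v\in\R(\Phi)\cap\overline{D(w,\delta)}$. For any $r<\delta$ and any $\epsilon>0$ we then have $\rv(\mu_{\epsilon,r})\in\overline{D(w,r)}\subset\overline{D(w,\delta)}$, hence $h(\epsilon,w,r)<H(w)+\eta$. Letting first $\epsilon\to 0$ and then $r\to 0$ in the definition \eqref{eqdefhw234} of $h(w)$ gives $h(w)\le H(w)+\eta$, and since $\eta>0$ was arbitrary, $h(w)\le H(w)$.

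The one point that requires care --- and the reason the hypothesis of continuity of $H$ at $w$ cannot be dropped from this argument --- is that the Misiurewicz-type construction underlying Lemma \ref{lemKH} does not produce a measure with rotation vector exactly $w$; it only guarantees that the rotation vector is trapped in the shrinking balls $\overline{D(w,r)}$. Passing from $H$ evaluated at these nearby rotation vectors to $H(w)$ in the limit $r\to 0$ is precisely where continuity enters. Everything else --- non-emptiness of the $(n,\epsilon,w,r)$-sets, the weak$\ast$ convergence bookkeeping, and the iterated-limit structure in the definitions of $h(\epsilon,w,r)$ and $h(w)$ --- is routine.
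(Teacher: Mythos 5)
Your proposal is correct and follows essentially the same route as the paper: both apply the Misiurewicz-type Lemma \ref{lemKH} to the maximal $(n,\epsilon,w,r)$-sets along a subsequence realizing the $\limsup$, obtain an invariant accumulation measure $\mu$ with $h(\epsilon,w,r)\le h_\mu(f)\le H(\rv(\mu))$ and $\rv(\mu)$ trapped near $w$, and then invoke the continuity of $H$ at $w$ before sending $\epsilon\to 0$ and $r\to 0$. Your explicit verification that $\rv(\mu_n)$ is a convex combination of the Birkhoff averages over $F_n(\epsilon,w,r)$, hence lands in $\overline{D(w,r)}$ in the limit, is a slightly more careful rendering of a step the paper states without detail, but it is not a different argument.
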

\begin{proof}
Note that $H(w)$ is well-defined by \eqref{eqinc1}. Let $\eta>0$. It follows from the definition of $h(w)$ (see \eqref{eqdefhw234}) and from
the continuity of $H$ at $w$ that there exists $r^*=r^*(\eta)>0$ such that for all $0<r\leq r^*$ and all $v\in D(w,r^*)$ we have
\begin{equation}\label{eqqaz}
h(w)\leq \lim_{\epsilon\to 0} h(\epsilon,w,r)<h(w)+\frac{\eta}{2}
\end{equation}
and
\begin{equation}\label{eqqaz1}
H(w)-\frac{\eta}{2}<H(v) <  H(w)+\frac{\eta}{2}.
\end{equation}
Using \eqref{eqqaz} and the definition of $h(\epsilon,w,r)$ we can pick $0<r<r^*$, $\epsilon>0$  and an increasing sequence $(n_i)_{i\in\bN}$ of positive integers such that
\begin{equation}
h(w)-\frac{\eta}{2}<\limsup_{i\to\infty}\frac{1}{n_i} \log \car \ F_{n_i}(\epsilon,w,r)<h(w)+\frac{\eta}{2}.
\end{equation}
We now apply Lemma \ref{lemKH} to  the $(n_i,\epsilon)$-separated sets $F_{n_i}(\epsilon,w,r)$ and obtain  the existence of $\mu\in \cM$ with $h(w)-\eta/2\leq h_{\mu}(f)\leq H(\rv(\mu))$ and $\rv(\mu)\in D(w,r)$. It follows that $h(w)-\eta<H(w)$ and since $\eta>0$ was arbitrary the claim follows.
\end{proof}

\noindent
\begin{remark}
We recall that the continuity of $w\mapsto H(w)$ holds for all $w\in \R(\Phi)$ if the entropy map $\mu\mapsto h_\mu(f)$ is upper semi-continuous. In particular, this is true, if $f$ is expansive \cite{Wal:81} or if $f$ is a $C^\infty$ map on a smooth Riemannian manifold \cite{N}.
\end{remark}

\noindent
In the following we establish  under rather mild assumptions the identity of $h(w)$ and $H(w)$.

We say a topological space  is a Besicovitch space if the Besicovitch covering theorem holds. Next, we give an alternative
formulation for the measure-theoretic entropy which is due to Katok.
Let $f:X\to X$ be a continuous map on a compact metric space and
let $\mu\in \cM_E$. For $n\in \bN$, $\epsilon>0$ and $0<\delta<1$
we denote by $N(n,\epsilon,\delta)$ the minimal number of $\epsilon$ balls in the $d_n$ metric that cover a set of measure greater or equal than $1-\delta$.
It is shown in \cite{Kat:80} that
\begin{equation}\label{katent}
h_\mu(f)=\lim_{\epsilon\to 0}\liminf_{n\to \infty} \frac{\log N(n,\epsilon,\delta)}{n}  = \lim_{\epsilon\to 0}\limsup_{n\to \infty} \frac{\log N(n,\epsilon,\delta)}{n}.
\end{equation}
Given a continuous potential $\Phi$  and  $w\in \R(\Phi)$ we say that $H(w)$ is \emph{approximated by ergodic measures} if there exists $(\mu_n)_{n\in\bN}\subset \cM_E$ such that $\rv(\mu_n)\to w$ and $h_{\mu_n}(f)\to H(w)$ as $n\to\infty$.  In this case we have $w\in \R_{Pt}(\Phi)$ (see Proposition \ref{prop24} {\rm (ii)}).
Being approximated by ergodic measures occurs for several classes of systems and potentials.
For example, we will show in  Corollary \ref{corintemp} that if $f$ satisfies \STP  and $\Phi$ is H\"older continuous then
$H(w)$ can be approximated by ergodic measures for all $w\in \R(\Phi)$.
We are now ready to state our main result about the identity of $h(w)$ and $H(w)$.

\begin{theorem}\label{thhwHw}
Let $f:X\to X$ be a continuous map on a compact metric space $X$ that is a Besicovitch space with respect to the induced topology.  Let $\Phi=(\phi_1,\cdots,\phi_m):X\to\bR^m$ be continuous
and let $w\in \R(\Phi)$ such that $H$ is continuous at $w$ and $H(w)$ is approximated by ergodic measures. Then $h(w)=H(w)$.
\end{theorem}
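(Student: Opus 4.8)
The plan is to establish the two inequalities $h(w)\le H(w)$ and $H(w)\le h(w)$ separately. The first one is essentially free: since $H(w)$ is approximated by ergodic measures we have $w\in\R_{Pt}(\Phi)$, and $H$ is continuous at $w$ by hypothesis, so Proposition \ref{propungleich} applies directly and yields $h(w)\le H(w)$. So the real content is the reverse inequality $H(w)\le h(w)$.

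For $H(w)\le h(w)$ I would combine the approximation hypothesis with Katok's entropy formula \eqref{katent}. Fix $r>0$, $\delta\in(0,1)$ and $\eta>0$. By the definition of being approximated by ergodic measures there is $\mu\in\cM_E$ with $\|\rv(\mu)-w\|_2<r/2$ and $h_\mu(f)>H(w)-\eta$. By Birkhoff's Ergodic Theorem $\frac1n S_n\Phi(x)\to\rv(\mu)$ for $\mu$-a.e. $x$, so the open sets $B_n=\{x\in X:\|\frac1n S_n\Phi(x)-\rv(\mu)\|_2<r/2\}$ satisfy $\mu(B_n)\to1$; choose $N_0$, depending on $\mu,r,\delta$ but not on $\epsilon$, with $\mu(B_n)\ge1-\delta$ for all $n\ge N_0$.

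Now fix $\epsilon>0$ and $n\ge N_0$, and let $S_n\subset B_n$ be a maximal (with respect to inclusion) $(n,\epsilon)$-separated subset of $B_n$. Each $x\in S_n$ lies in $B_n$, so $\frac1n S_n\Phi(x)\in D(w,r)$ by the triangle inequality; hence $S_n$ is an $(n,\epsilon,w,r)$-set, it can be extended to a maximal one, and therefore $\#S_n\le\#F_n(\epsilon,w,r)$ (recall that $h(\epsilon,w,r)$ does not depend on the choice of the maximal sets). On the other hand, maximality of $S_n$ inside $B_n$ forces $B_n\subset\bigcup_{x\in S_n}\overline{B}_{d_n}(x,\epsilon)$, so $B_n$, a set of $\mu$-measure at least $1-\delta$, is covered by $\#S_n$ Bowen balls of radius $2\epsilon$, which gives $N(n,2\epsilon,\delta)\le\#S_n$. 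Taking $\liminf_{n\to\infty}\frac1n\log(\cdot)$ on the left and $\limsup_{n\to\infty}\frac1n\log(\cdot)$ on the right, and using $\#S_n\le\#F_n(\epsilon,w,r)$ together with the definition \eqref{defhw} of $h(\epsilon,w,r)$, we obtain $\liminf_{n\to\infty}\frac1n\log N(n,2\epsilon,\delta)\le h(\epsilon,w,r)$ for every $\epsilon>0$. Letting $\epsilon\to0$ and invoking Katok's formula \eqref{katent} (note $h(\epsilon,w,r)$ is monotone in $\epsilon$, so $\lim_{\epsilon\to0}h(\epsilon,w,r)$ exists) gives $h_\mu(f)\le\lim_{\epsilon\to0}h(\epsilon,w,r)$, whence $H(w)-\eta\le\lim_{\epsilon\to0}h(\epsilon,w,r)$. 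Since $\eta>0$ was arbitrary, $H(w)\le\lim_{\epsilon\to0}h(\epsilon,w,r)$ for every $r>0$, and letting $r\to0$ yields $H(w)\le h(w)$.

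The heart of the matter, and where I expect the real work, is the passage in the third paragraph from ``a set of large $\mu$-measure on which the Birkhoff averages are already close to $w$'' to ``exponentially many $(n,\epsilon)$-separated points sitting inside that set''. The good set itself is cheap (Birkhoff's theorem, or Egorov for uniform convergence), but counting separated points inside a prescribed set of measure $\ge1-\delta$ with the sharp rate $e^{nh_\mu(f)}$ is exactly what Katok's formula supplies; the bookkeeping that ties its covering numbers $N(n,\epsilon,\delta)$ to inclusion-maximal separated subsets of a fixed set, i.e., the comparison of covering and separating numbers for the Bowen metrics, is the point at which the Besicovitch covering property of $X$ enters. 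Everything else (the triangle-inequality step from $\rv(\mu)$ to $w$, the monotonicity in $\epsilon$ and $r$, and the interchange of the limits) is routine.
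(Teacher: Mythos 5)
Your argument is correct, and in its key step it is genuinely different from the paper's. Both proofs get $h(w)\le H(w)$ from Proposition \ref{propungleich} and both use Katok's formula \eqref{katent} for the reverse inequality, but the direction of the covering/separating comparison is reversed. The paper starts from the minimal Bowen cover realizing $N(n,\epsilon,\delta)$, discards the balls missing the good set, and then invokes the Besicovitch covering theorem to extract a disjoint subfamily whose centers form the required separated set; it also needs the uniform-continuity estimate \eqref{varpot} to transfer the Birkhoff-average condition from the good set to the ball centers. You instead choose an inclusion-maximal $(n,\epsilon)$-separated set \emph{inside} the good set $B_n$ and use the elementary fact that such a set is automatically $(n,\epsilon)$-spanning for $B_n$, which bounds Katok's covering number $N(n,2\epsilon,\delta)$ from above by a separated count. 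That direction is free: contrary to your closing remark, the Besicovitch property never enters your argument (it is needed only for the opposite passage, from a cover to a disjoint subfamily, which is the paper's route). Consequently your proof establishes the theorem \emph{without} the Besicovitch hypothesis on $X$, and it also dispenses with the uniform-continuity step, since your separated points already lie in $B_n$. What the paper's route buys in exchange is essentially nothing here; yours is the cleaner and more general argument.

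One small point to tighten: the inequality $\car\ S_n\le \car\ F_n(\epsilon,w,r)$ does not follow merely from extending $S_n$ to an inclusion-maximal $(n,\epsilon,w,r)$-set, since distinct inclusion-maximal separated sets may have different cardinalities, and the paper only asserts choice-independence of $h(w)$, not of $h(\epsilon,w,r)$. The standard fix: $F_n(\epsilon/2,w,r)$ is $(n,\epsilon/2)$-spanning for $\{y:\frac1nS_n\Phi(y)\in D(w,r)\}\supset S_n$, and an $(n,\epsilon)$-separated set injects into any $(n,\epsilon/2)$-spanning set of a superset, so $\car\ S_n\le\car\ F_n(\epsilon/2,w,r)$. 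This changes nothing after the limit $\epsilon\to0$ in \eqref{defhw}--\eqref{eqdefhw234}.
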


\begin{proof}

The inequality $h(w)\leq H(w)$ is shown in Proposition \ref{propungleich}. To show $H(w)\leq h(w)$ we only need to consider the case $H(w)>0$.
Let $\eta>0$ be arbitrary.
Pick $r_0>0$ such that
\begin{equation}
\left|\lim_{\epsilon\to 0} h(\epsilon,w,r)- h(w)\right|< \frac{\eta}{2}
\end{equation}
for all $0<r\leq r_0$.
  It therefore suffices
to show that there exists $\epsilon_0>0$ such that
\begin{equation}
 h(\epsilon,w,r_0)>H(w)-\frac{\eta}{2}
\end{equation}
for all $0<\epsilon\leq\epsilon_0$.
 By uniform continuity of $\Phi$ there exists $\epsilon_0$ such that if $0<\epsilon\leq \epsilon_0$,  $x\in X$ and $n\in \bN$ then for all
$x_1,x_2\in B_n(x,\epsilon)$ we have
\begin{equation}\label{varpot}
\left|\frac{1}{n}S_n\Phi(x_1)-\frac{1}{n}S_n\Phi(x_2)\right|<r_0/4.
\end{equation}
 Here $\frac{1}{n}S_n\Phi$ denotes the $m$-dimensional
Birkhoff average defined in \eqref{defSnm}.
Since $H(w)$ is approximated by ergodic measures there exists $\mu\in \cM_E$ such that $|\rv(\mu)-w|<r_0/2$ and $h_\mu(f)>H(w)-\eta/2$.
Let $0<\delta<1$ be fixed. Applying \eqref{katent} and making  $\epsilon_0$ smaller if necessary we conclude that
\begin{equation}\label{eqkatb}
\liminf_{n\to \infty} \frac{\log N(n,\epsilon,\delta)}{n} > H(w)-\eta/2.
\end{equation}
for all $0<\epsilon\leq\epsilon_0$. From now we consider a fixed $0<\epsilon\leq\epsilon_0$. We  define
\begin{equation}\label{defBn}
\cB_{n,r_0/4}(\mu)=\left\{x\in \cB(\mu): |S_l\Phi(x)-\rv(\mu)|<r_0/4\, \ {\rm for\  all }\, \ l\geq n\right\}.
\end{equation}
Since $(\cB_{n,r_0/4}(\mu))_{n\in \bN}$ is an increasing sequence of Borel sets whose union is a set of full $\mu$-measure we conclude that
\begin{equation}\label{eqvbn}
\lim_{n\to \infty} \mu(\cB_{n,r_0/4}(\mu))=1.
\end{equation}
We denote by $\tilde{N}(n,\epsilon,\delta)$ the number of Bowen balls determining $N(n,\epsilon,\delta)$ that have non-empty intersection with  $\cB_{n,r_0/4}(\mu)$. For all sufficiently large $n$ these balls, due \eqref{defBn}, will cover a set of measure $1-\delta'$ for some $\delta<\delta'<1$.
It follows from the fact that \eqref{katent} also holds for $\delta'$ that \eqref{eqkatb} remains true when we replace $N(n,\epsilon,\delta)$ by
$\tilde{N}(n,\epsilon,\delta)$.
For $t\in \bR$ let $[t]$ denote the largest integer smaller or equal than $t$.
Let $\beta$ be a Besicovitch constant of $X$. Note that this constant can be chosen independently of the metrics $d_n$ since they are decreasing in $n$.
It follows from the Besicovitch covering theorem that there exist $[\tilde{N}(n,\epsilon,\delta)/\beta]$ Bowen balls
in the collection of balls determining $\tilde{N}(n,\epsilon,\delta)$ that are pair-wise disjoint. The centers of these balls form a $(n,\epsilon)$-separated set which we denote by
$F_n(\epsilon,\delta)$. It follows from \eqref{eqkatb} and the construction of the set   $F_n(\epsilon,\delta)$ that
\begin{equation}\label{eqh(w)12}
\liminf_{n\to \infty} \frac{\log \car \  F_n(\epsilon,\delta)}{n} > H(w)-\eta/2.
\end{equation}
Moreover, since each of the balls defining $\tilde{N}(n,\epsilon,\delta)$ has non-empty intersection with $\cB_{n,r_0/4}(\mu)$ we obtain from \eqref{varpot} and \eqref{defBn} that
$\frac{1}{n}S_n\Phi(x)\in D(w,r)$ for all $x\in F_n(\epsilon,\delta)$. Finally,  we may conclude from \eqref{eqh(w)12}  (also using the definition of $h(\epsilon,w,r_0)$, see \eqref{defhw}) that $h(\epsilon,w,r_0)>H(w)-\eta/2$ which completes the proof of the theorem.
\end{proof}

\section{Entropy via periodic orbits}

In this section we consider smooth non-uniformly expanding systems and show that under certain assumptions on $w\in \R(\Phi)$ the entropy $H(w)$ is entirely determined by the growth rate of those periodic orbits whose rotation vectors are sufficiently
close to $w$. Our approach heavily relies on previous work by Gelfert and the second author of this paper \cite{GW2} about the computation of the topological pressure via
periodic orbits. Throughout this section we use the notations from Section 2.5.

Let $M$ be a smooth Riemannian manifold and let
$f\colon M\to M$ be a $C^{1+\epsilon}$-map. Let $X\subset M$ be compact locally maximal $f$-invariant set. We consider $f|_X$ and simply write $f$ instead of  $f|_X$.
Let $\Phi=(\phi_1,\cdots,\phi_m): X\to \bR^m$ be a continuous potential  with rotation set $\R(\Phi)$.
To avoid trivialities we will always assume $h_{\rm top}(f)>0$.

We say that $H(w)$ is uniformly approximated by    measures in $\cM^+_E$ if there exist $\chi(w)>0$ and $(\mu_k)_{k\in\bN}\subset \cM_E^+$ such that $\chi(\mu_k)\geq \chi(w)$ for all $k\in\bN$, and $\rv(\mu_k)\to w$ as well as $h_{\mu_k}(f)\to H(w)$ as $k\to\infty$.

We now introduce an entropy-like quantity which is entirely defined  by the growth rate of those  periodic points that have rotation vectors in a given ball about $w$ and have some uniform expansion.

Let $w\in \R(\Phi),r>0$ and let
$0<\alpha$, $0<c\leq 1$. Define
\begin{equation}
 h_{\rm per}(w,r,\alpha,c,n) =
\car \ \Per_n(w,r,\alpha,c)
\end{equation}
if $\Per_n(w,r,\alpha,c)\ne\emptyset$ and
\begin{equation}\label{hper}
h_{\rm per}(w,r,\alpha,c,n)= 1
\end{equation}
otherwise. Furthermore, we define
\begin{equation}\label{hper}
h_{\rm per}(w,r,\alpha,c) = \limsup_{n\to\infty}
                 \frac{1}{n}\log h_{\rm per}(w,r,\alpha,c,n).
\end{equation}
We have the following.

\begin{proposition}\label{li}
  Let $w\in \R(\Phi)$  and suppose $H(w)$ is uniformly approximated by   measures in $\cM^+_E$.  Let $r>0$.
Then for all $0< \alpha< \chi(w)$ we have
  \begin{equation}\label{eqas}
  H(w) \le
  \lim_{c\to 0} h_{\rm per}(w,r,\alpha,c).
  \end{equation}
\end{proposition}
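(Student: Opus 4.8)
The plan is to realize (up to an arbitrarily small error $\eta$) the entropy $H(w)$ inside a genuinely uniformly expanding, topologically mixing horseshoe $\Lambda$ located near one of the approximating measures, and then to count the periodic orbits of $\Lambda$ by means of Proposition~\ref{ha}.

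First I would fix $\eta>0$ and pick an auxiliary rate $\alpha'$ with $\alpha<\alpha'<\chi(w)$. The uniform approximation hypothesis provides an ergodic measure $\mu\in\cM^+_E$ with $\chi(\mu)\ge\chi(w)>\alpha'$, with $h_\mu(f)>H(w)-\eta$, and---choosing the index in the approximating sequence large---with $\rv(\mu)\in D(w,r/2)$. Since $\mu$ is ergodic and all of its Lyapunov exponents are bounded below by $\chi(\mu)>\alpha'$, I would invoke the Katok-type horseshoe construction of \cite{GW2}: for every sufficiently small error there is a compact, topologically mixing, $f$-invariant set $\Lambda=\Lambda(\mu,\eta)\subset X$ with
\[
h_{\rm top}(f|_\Lambda)>h_\mu(f)-\eta>H(w)-2\eta,\qquad \Lambda\subset X_{\alpha',c}\ \text{for some }c=c(\mu,\eta)\in(0,1],
\]
and such that $\cM(f|_\Lambda)$ lies in an arbitrarily small weak$^*$ neighborhood of $\mu$; in particular, by shrinking that neighborhood enough, $\rv(\nu)\in D(w,r)$ for every $\nu\in\cM(f|_\Lambda)$ (using continuity of $\nu\mapsto\rv(\nu)$ and $\rv(\mu)\in D(w,r/2)$). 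Because $\alpha<\alpha'$, the definition of $X_{\alpha,c}$ gives $\Lambda\subset X_{\alpha',c}\subset X_{\alpha,c}$.

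Then I would assemble the pieces. Any $x\in\Per_n(f)\cap\Lambda$ has its whole orbit in $X_{\alpha,c}$, so $x\in\EPer_n(\alpha,c)$ (the uniform expansion estimate along the orbit is exactly the condition defining $\EPer_n(\alpha,c)$, cf.\ \eqref{eqhi} and \eqref{eqexpand}); moreover $\mu_x\in\cM(f|_\Lambda)$, so $\rv(x)\in D(w,r)$, i.e.\ $x\in\Per_n(w,r)$. Hence $\Per_n(f)\cap\Lambda\subset\EPer_n(w,r,\alpha,c)$ for every $n$. Since $f|_\Lambda$ is topologically mixing and uniformly expanding, the equality case of Proposition~\ref{ha} yields $\frac1n\log\car(\Per_n(f)\cap\Lambda)\to h_{\rm top}(f|_\Lambda)$, so
\[
h_{\rm per}(w,r,\alpha,c)\ \ge\ \limsup_{n\to\infty}\frac1n\log\car\bigl(\Per_n(f)\cap\Lambda\bigr)\ =\ h_{\rm top}(f|_\Lambda)\ >\ H(w)-2\eta .
\]
By \eqref{ni}, for fixed $\alpha$ the sets $\EPer_n(\alpha,c)$ increase as $c\downarrow 0$, so $c\mapsto h_{\rm per}(w,r,\alpha,c)$ is monotone and $\lim_{c\to0}h_{\rm per}(w,r,\alpha,c)=\sup_{c>0}h_{\rm per}(w,r,\alpha,c)\ge H(w)-2\eta$. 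Letting $\eta\to0$ gives \eqref{eqas}.

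The hard part is the horseshoe step behind the displayed properties: one must build a set that is \emph{uniformly} expanding with rate strictly above $\alpha$ (uniformly in $n$), that recovers the entropy of $\mu$ up to $\eta$, and whose invariant measures---hence all its periodic rotation vectors---stay within $r$ of $w$. This is precisely where the hypothesis that $H(w)$ is uniformly approximated by measures in $\cM^+_E$ is used: it supplies ergodic measures whose exponents are bounded away from $0$ and below by $\chi(w)>\alpha$, which is what lets the construction of \cite{GW2} output horseshoes contained in a single $X_{\alpha,c}$. Keeping track of the constant $c$ (which deteriorates as $\eta\to0$) is a minor nuisance that disappears after the final passage $c\to0$.
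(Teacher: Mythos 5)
Your argument is correct and follows essentially the same route as the paper: extract an ergodic measure from the uniform approximation hypothesis, apply the Katok horseshoe construction from \cite{GW2} to produce compact uniformly expanding invariant sets whose periodic points all lie in $\EPer_n(w,r,\alpha,c)$ for a suitable $c$, count those periodic points via Proposition~\ref{ha}, and use the monotonicity \eqref{ni} in $c$. The one point to tidy is your assertion that the horseshoe $\Lambda$ is topologically mixing for $f$: the construction actually yields that some power $f^l|_{\Lambda}$ is conjugate to a full shift, so one should apply the equality case of Proposition~\ref{ha} to $f^l$ and divide by $l$ (as the paper does), which changes nothing in the conclusion.
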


\begin{proof}
If $H(w)=0$ the statement is trivial; therefore, we can assume $H(w)>0$.
Let $0<\alpha<\chi(w)$ and let $\delta>0$. Since $H(w)$ is uniformly approximated by  measures in $\cM^+_E$ there exists $\mu\in \cM_E^+$
with  $\chi(\mu)>\alpha$, $\rv(\mu)\in D(w,r/2)$ and $|h_\mu(f)-H(w)|<\delta$.

 It is a consequence of Katok's theory of approximation of hyperbolic measures by hyperbolic sets in it's version
for non-uniformly expanding maps (see for example \cite{GW2} and the references therein) that there exists
a sequence $(\mu_n)_n$ of measures $\mu_n\in \cM_{\rm E}$ supported on compact
invariant expanding sets $X_n\subset X$ such that
\begin{equation}\label{holl}
h_\mu(f) \le \liminf_{n\to\infty}h_{\rm top}(f|_{X_n}),
\end{equation}
$\mu_n\to\mu$ in the weak$\ast$ topology and  that $\rv(x)\in D(w,r)$ for all $x\in \Per(f)\cap X_n$.
Furthermore, for each $n\in\bN$ there exist $l,s\in\bN$ such that $f^l|_{ X_n}$
is conjugate to the full shift in $s$ symbols. For every
$\eta>0$ there is a number $N=N(\varepsilon)\ge 1$
such that
\begin{equation}\label{kuh}
h_\mu(f) -\eta
\le h_{\rm top}(f|_{X_n})
\end{equation}
for all $n\geq N$.
Moreover, there exists a number $c_0=c_0(n)$ with $0<c_0(n)\le1$ such that for
every  $x\in X_n$ with $x\in \Per_k(f)$ we have $x\in \Per_k(\alpha,c_0)$.
Together we obtain
\begin{equation}\label{kir}
  \Per_k(f)\cap X_n \subset \Per_k(w,r,\alpha,c_0)
\end{equation}
for every $k\in\bN$.
Let $l, s\in\bN$ such that $f^l|_{X_n}$ is topologically conjugate to the full
shift in $s$ symbols.
Since $l\cdot h_{\rm top}(f|_{X_n}) = h_{\rm top}(f^l|_{X_n})$
(see~\cite[Theorem 9.8]{Wal:81}), we may conclude that
\begin{equation*}
  h_\mu(f)-\eta
  \le \frac{1}{l}h_{\rm top}(f^l|_{X_n}).
\end{equation*}
It now follows from Proposition~\ref{ha} and an elementary calculation that
\begin{equation}\label{eqrep}
  \begin{split}
  &h_\mu(f) -\eta\\
    &\le
    \frac{1}{l}\limsup_{k\to\infty}\frac{1}{k}\log\left(\car  \left(\Per_{lk}(f)\cap X_n\right)
      \right)\\
  &\le \limsup_{k\to\infty}\frac{1}{k}\log\left(\car  \left(\Per_{k}(f)\cap X_n \right)\right).
\end{split}
\end{equation}
Combining~\eqref{kir} and~\eqref{eqrep} yields
\begin{equation*}
  h_\mu(f)-\eta
  \le \limsup_{k\to\infty}\frac{1}{k}\log\left( \car \ \Per_k(w,r,\alpha,c_0)\right).
\end{equation*}
Recall that by~\eqref{ni} the  map $c\mapsto h_{\rm per}(w,r,\alpha,c)$ is
non-decreasing as $c\to 0$.
Since $\eta>0$ and $\delta>0$ can be chosen arbitrarily small the claim follows.
\end{proof}

The following Theorem  is the main result of this section.

\begin{theorem}\label{theoperrot}
Let $f:M\to M$ be a $C^{1+\epsilon}$-map, and let $X\subset M$ be a compact $f$-invariant locally maximal set. Let $\Phi=(\phi_1,\cdots,\phi_m): X\to \bR^m$ be continuous and
let $w\in\R(\Phi)$ such that $H(w)$ is uniformly approximated by   measures in $\cM^+_E$ and that $H$ is continuous at $w$. Then for all $0<\alpha<\chi(w)$,

\begin{equation}\label{idmain}
H(w)=\lim_{r\to 0}\lim_{c\to 0}\limsup_{n\to\infty} \frac{1}{n} \log  h_{\rm per}(w,r,\alpha,c,n).
\end{equation}

\end{theorem}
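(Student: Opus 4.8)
The plan is to obtain \eqref{idmain} by combining the lower bound already established in Proposition~\ref{li} with a matching upper bound furnished by the uniformly expanding sets $X_{w,r,\alpha,c}$ from \eqref{wagner}. Write, for brevity,
\[
R=\lim_{r\to 0}\lim_{c\to 0} h_{\rm per}(w,r,\alpha,c),
\]
which, by the definition of $h_{\rm per}(w,r,\alpha,c)$ in \eqref{hper}, is exactly the right-hand side of \eqref{idmain}; the nested limits exist because $c\mapsto h_{\rm per}(w,r,\alpha,c)$ is non-decreasing as $c\downarrow 0$ by \eqref{ni} and bounded above by $h_{\rm top}(f)<\infty$, while $r\mapsto\lim_{c\to 0}h_{\rm per}(w,r,\alpha,c)$ is non-decreasing in $r$ since $\Per_n(w,r)$ shrinks as $r\downarrow 0$. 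Fixing $0<\alpha<\chi(w)$, Proposition~\ref{li} gives $H(w)\le\lim_{c\to 0}h_{\rm per}(w,r,\alpha,c)$ for every $r>0$, so taking the infimum over $r>0$ yields $H(w)\le R$. It thus remains to prove $R\le H(w)$, and this is where the uniform-expansion machinery of Section~\ref{sec:2} enters.

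For the upper bound, fix $r>0$ and $c\in(0,1]$. If $\EPer_n(w,r,\alpha,c)=\emptyset$ for every $n$, then $h_{\rm per}(w,r,\alpha,c)=0\le H(w)$. Otherwise $X:=X_{w,r,\alpha,c}$ is a well-defined compact uniformly expanding set, and \eqref{eqsi} gives $\Per_n(f)\cap X=\EPer_n(w,r,\alpha,c)$ for all $n$. Combining this with Proposition~\ref{ha} yields
\[
h_{\rm per}(w,r,\alpha,c)\le h_{\rm top}(f|_X)
\]
(the value $1$ assigned to $h_{\rm per}(w,r,\alpha,c,n)$ on empty fibers is harmless because $h_{\rm top}(f|_X)\ge 0$). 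Next I would apply the variational principle \eqref{eqvarprient} to $f|_X$, so that $h_{\rm top}(f|_X)=\sup_{\mu\in\cM(f|_X)}h_\mu(f)$. Every $\mu\in\cM(f|_X)$ is an $f$-invariant measure supported in $X$ and, by the property of $X_{w,r,\alpha,c}$ recorded right after \eqref{wagner}, satisfies $\rv(\mu)\in D(w,r)\subset\overline{D(w,r)}$; hence $h_\mu(f)\le H(\rv(\mu))$ by the definition \eqref{defH} of $H$. This gives
\[
h_{\rm per}(w,r,\alpha,c)\le h_{\rm top}(f|_X)\le\sup\bigl\{H(v):v\in\R(\Phi)\cap\overline{D(w,r)}\bigr\}=:M(r),
\]
a bound independent of $c$, so $\lim_{c\to 0}h_{\rm per}(w,r,\alpha,c)\le M(r)$ for every $r>0$.

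Finally I would let $r\to 0$. Since $w\in\R(\Phi)\cap\overline{D(w,r)}$ one has $M(r)\ge H(w)$ for all $r$, while continuity of $H$ at $w$ gives, for each $\epsilon>0$, a $\delta>0$ with $H(v)<H(w)+\epsilon$ whenever $v\in\R(\Phi)$ and $|v-w|<\delta$; hence $M(r)\le H(w)+\epsilon$ once $r<\delta$, so $\lim_{r\to 0}M(r)=H(w)$. Together with the previous paragraph this gives $R\le H(w)$, and with $H(w)\le R$ we conclude \eqref{idmain}; note that neither bound depends on the choice of $\alpha$ beyond $0<\alpha<\chi(w)$, so the identity holds for all such $\alpha$. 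The substantive work is already contained in Proposition~\ref{li}; the reverse inequality is a comparatively soft consequence of Propositions~\ref{ha} and~\ref{li}, and the one point that genuinely needs care is the control $\rv(\mu)\in\overline{D(w,r)}$ for \emph{all} $\mu\in\cM(f|_{X_{w,r,\alpha,c}})$ rather than only for the periodic-orbit measures generating $X_{w,r,\alpha,c}$ — but this is exactly the property established after \eqref{wagner}, and the continuity hypothesis on $H$ at $w$ is precisely what is needed to pass to the limit $r\to 0$.
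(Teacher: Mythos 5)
Your lower bound $H(w)\le \lim_{r\to 0}\lim_{c\to 0}h_{\rm per}(w,r,\alpha,c)$ is exactly the paper's (both reduce it to Proposition \ref{li}), but your upper bound takes a different route and contains a genuine gap. The problematic step is $h_{\rm top}(f|_X)\le M(r)$ for $X=X_{w,r,\alpha,c}$, which rests on the assertion recorded after \eqref{wagner} that every $\mu\in\cM(f|_{X})$ has $\rv(\mu)\in D(w,r)$ (together with the companion identity \eqref{eqsi}). That assertion is not proved in the paper, is never actually used in the paper's arguments, and is false in general: $X$ is the \emph{closure} of the union of the periodic orbits with rotation vector in $D(w,r)$, and this closure can be vastly larger than the orbits themselves. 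For instance, for the doubling map on the circle with $\Phi(x)=\cos(2\pi x)$ and $w=0$, the periodic orbits whose Birkhoff average lies in $D(0,r)$ are dense (append a balancing tail to any finite orbit segment), so $X$ is the whole circle; then $\cM(f|_X)$ contains the Dirac mass at the fixed point $0$, whose rotation vector is $1$, and $h_{\rm top}(f|_X)=\log 2$, which strictly exceeds $\sup\{H(v)\colon v\in\overline{D(0,r)}\}$ for small $r$. So the chain $h_{\rm per}(w,r,\alpha,c)\le h_{\rm top}(f|_X)\le M(r)$ breaks at the second inequality, and no choice of $r$ repairs it.

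The correct repair --- and the paper's actual argument --- is to avoid $h_{\rm top}(f|_X)$ altogether and to bound the growth rate of $\car\,\Per_n(w,r,\alpha,c)$ by the entropy of a \emph{specific} invariant measure built from those very periodic orbits. Uniform expansivity of $X$ is used only to produce an expansivity constant $\delta$, so that $\Per_n(w,r,\alpha,c)$ is $(n,\varepsilon')$-separated for every $\varepsilon'\le\delta$; Lemma \ref{lemKH} (the Misiurewicz argument) then yields a weak$\ast$ accumulation point $\mu$ of the uniform measures on these sets with $\limsup_n\frac1n\log\car\,\Per_n(w,r,\alpha,c)\le h_\mu(f)$. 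Because each such uniform measure has rotation vector equal to a convex combination of the orbit rotation vectors, the bound $\rv(\mu)\in\overline{D(w,r)}$ genuinely holds for this particular $\mu$, and $h_\mu(f)\le H(\rv(\mu))<H(w)+\eta$ by continuity of $H$ at $w$. Your surrounding framework (fix $r$ via continuity of $H$, dispose of the empty case, take the limit in $c$ last) is sound; only the mechanism converting periodic-orbit counts into an entropy bound needs to be replaced.
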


\begin{proof}
  The $"\leq"$ part in \eqref{idmain}  already follows from Proposition \ref{li}. To prove the opposite inequality
pick $0<\alpha<\chi(w)$ and $\eta>0$.
Since the right hand side in the limit $r\to 0$ in \eqref{idmain}  is non-increasing as $r\to 0$, it suffices to show that
\begin{equation}\label{idmain2}
\lim_{c\to 0} h_{\rm per}(w,r,\alpha,c)< H(w)+\eta
\end{equation}
for some $r>0$.
By continuity of $H$ at $w$ there exists $r>0$ such that $H(v)<H(w)+\eta/2$ for all $v\in D(w,r)\cap \R(\Phi)$. Pick $c_0>0$ such that
\begin{equation}\label{main4}
\lim_{c\to 0}\limsup_{n\to\infty} \frac{1}{n} \log h_{\rm per}(w,r,\alpha,c,n)-\frac{\eta}{2}<
\limsup_{n\to\infty} \frac{1}{n} \log h_{\rm per}(w,r,\alpha,c_0,n)
\end{equation}
for all $0<c\leq c_0$.
If  $\Per_n(w,r,\alpha,c_0)=
\emptyset$ for all $n\in\bN$, then   $h_{\rm per}(w,r,\alpha,c_0)=0$ and \eqref{idmain2} trivially holds.
Next we consider the case that $\Per_n(w,r,\alpha,c_0)\not=
\emptyset$ for some $n\in\bN$.
We define
\[
X_{w,r,\alpha,c_0}
\eqdef \overline{\bigcup_{n=1}^\infty \Per_n(w,r,\alpha,c_0)} .
\]
It follows from the continuity of the  minimums norm of $Df$ that $X_{w,r,\alpha,c_0} $ is a  compact invariant uniformly expanding  set for $f$.\footnote{This implies that limit $c\to 0$ on the
left-hand side of \eqref{main4} is bounded above by $h_{\rm top}(f)$ and thus in particular finite.}
For $n\ge 1$ with $\Per_n(w,r,\alpha,c_0)\ne\emptyset$ we
define a measure $ \sigma_n= \sigma_n(w,r,\alpha,c_0)$ by
\begin{equation}\label{tilsig}
 \sigma_n=
\frac{1}{\car\  \Per_n(w,r,\alpha,c_0)}
\sum_{x\in\Per_n(w,r,\alpha,c_0)}  \delta_x,
\end{equation}
where $\delta_x$ denotes the delta Dirac measure supported at $x$.
Note that every measure $ \sigma_n$ defined
in~\eqref{tilsig} belongs to $\cM$ and is in the convex hull of the set
$\{\delta_x\colon x\in \Per_n(w,r,\alpha,c_0)\}$.
Since $\cM$ is weak$\ast$ compact, there exists a subsequence $( \sigma_{n_k})_k$ converging to some measure
$\mu=\mu_{w,r,\alpha,c_0}\in\cM$ in the weak$\ast$ topology.
It follows that $\chi(\mu)\ge \alpha$ and $\rv(\mu)\in D(w,r)$.
Since $X_{w,r,\alpha,c_0} $ is uniformly expanding, there exists $\delta=\delta(w,r,\alpha,c_0)$ which is an
expansivity constant for $f|_{X_{w,r,\alpha,c_0} }$. In
particular, for every $n\in \bN$ and every  $0<\varepsilon'\le\delta$ the set
$\Per_n(w,r,\alpha,c_0)$ is
$(n,\varepsilon')$-separated. It now follows from Lemma \ref{lemKH} that
\begin{equation}\label{krey}
  \limsup_{n\to\infty}\frac{1}{n}\log \car \ \Per_n(w,r,\alpha,c_0) \le
  h_{\mu}(f)\le H(\rv( \mu))<H(w)+\frac{\eta}{2}.
\end{equation}
Combining this with \eqref{main4} gives \eqref{idmain2} and the proof is complete.
\end{proof}

\noindent
\begin{remarks} {\rm (i)} While Theorem \ref{theoperrot} is stated in the context of non-uniformly expanding systems the analogous result holds for non-uniformly hyperbolic diffeomorphisms of saddle type. For such a map one defines $\chi(\mu)$ as the
smallest absolute value of the Lyapunov exponents of $\mu$. We refer to \cite{GW1} for more details about these classes of systems.\\
{\rm (ii)} We recall that the continuity of $w\mapsto H(w)$  holds for all $w\in \R(\phi)$
if the entropy map $\mu\mapsto h_\mu(f)$ is upper-semi continuous (and therefore particular  if $f|_X$ is expansive).\\
{\rm (iii)} If $X$ is  a topological mixing hyperbolic set (uniformly expanding or of saddle type) then it can be shown that Theorem \ref{theoperrot} holds for all H\"older continuous potentials $\Phi$ and all $w\in \R(\Phi)$.
Similar results hold for topological mixing subshifts of finite type and expansive homeomorphisms with specification.
\\
{\rm (iv)} Theorem \ref{theoperrot} even provides new information in the case when $X$ is a topological mixing hyperbolic set and $w$ is the rotation vector of the measure of maximal entropy. Indeed,  Proposition \ref{ha} provides a version of \eqref{idmain}  by considering all periodic points. On the other hand, our result shows that it is already sufficient to consider periodic points with sufficiently “large” Lyapunov exponents and rotation vectors sufficiently close to $w$.
\end{remarks}

\section{Dependence on parameters}
Let $f:X\to X$ be a continuous map on a compact metric space. In this section we assume that $f$ has strong thermodynamic properties \STP. Let $\Phi=(\phi_1,\cdots,\phi_m)$, where $\phi_1,\cdots,\phi_m\in C^\alpha(X,\bR)$ for some fixed $\alpha>0$.

The main goal of this section is to study the dependence of the entropy $H(w)$  on $w$ in the interior of the rotation set.
Recall that since the entropy map is upper semi-continuous (property (2) in the definition of \STP) it follows that $w\mapsto H(w)$ is continuous. Here we show
that under the assumption of strong thermodynamic properties  $w\mapsto H(w)$
 is even real analytic on the interior of $\R(\Phi)$.
We start by introducing some notation.

Given $T=(t_1,\cdots,t_m)\in \bR^m$ we consider the linear combination $T\cdot \Phi = t_1\phi_1+\cdots + t_m\phi_m$ of the potentials $\phi_1,\cdots,\phi_m$.
We write
\begin{equation}
Q(T)=P_{\rm top}(T\cdot \Phi).
\end{equation}
It follows from property 3 of \STP\  that $Q$ is a real-analytic function of $\bR^m$.
Let $\mu_T$ denote  the unique equilibrium measure of the potential $T\cdot \Phi$ (which is well-defined by property 4 of \STP).
As a  consequence of \eqref{eqdifpre} we obtain that
\begin{equation}\label{eqw1}
DQ(T)=\rv(\mu_T).
\end{equation}
Hence, the map $T\mapsto \rv(\mu_T)$ is also real-analytic.
Writing $h(T)=h_{\mu_T}(f)$ gives
\begin{equation}\label{eqvarT}
Q(T)=h(T)+ T\cdot DQ(T)
\end{equation}
which implies that $T\mapsto h(T)$ is also real-analytic.
First we apply results in \cite{Je} to obtain a characterization for $\R(\Phi)$ having non-empty interior.

\begin{proposition}\label{proptri}
The  following are equivalent.
\begin{enumerate}
\item[(i)]
No non-trivial linear combination of $\Phi$ is cohomologous to a constant.
\item[(ii)]
${\rm int}\  \R(\Phi)\not=\emptyset$.
\end{enumerate}
\end{proposition}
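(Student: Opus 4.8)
The plan is to reduce both conditions to a single statement about hyperplanes and then feed that statement into the thermodynamic formalism set up above. Since $\R(\Phi)$ is a compact convex subset of $\bR^m$, it has empty interior if and only if it is contained in some affine hyperplane $\{w\in\bR^m:T\cdot w=a\}$ with $T\in\bR^m\setminus\{0\}$ and $a\in\bR$ (otherwise its affine hull is all of $\bR^m$, so it contains $m+1$ affinely independent points and hence a full-dimensional simplex). Writing $T\cdot\rv(\mu)=\int(T\cdot\Phi)\,d\mu$, this containment is equivalent to saying that $\int(T\cdot\Phi)\,d\mu=a$ for every $\mu\in\cM$. So the proposition amounts to the equivalence: there exist $T\neq 0$ and $a\in\bR$ with $\int(T\cdot\Phi)\,d\mu=a$ for all $\mu\in\cM$ if and only if some non-trivial linear combination of $\Phi$ is cohomologous to a constant.

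One implication is immediate: if $T\cdot\Phi-a=\eta-\eta\circ f$ for a continuous $\eta$ and $T\neq 0$, then integrating against an arbitrary $\mu\in\cM$ and using $f$-invariance gives $\int(T\cdot\Phi)\,d\mu=a$. For the converse, suppose $\int(T\cdot\Phi)\,d\mu=a$ for all $\mu\in\cM$ with $T\neq 0$, and consider the parameter line $T'=tT$. By \eqref{eqw1} the real-analytic function $g(t)=Q(tT)=P_{\rm top}\bigl(t\,(T\cdot\Phi)\bigr)$ has derivative $g'(t)=T\cdot DQ(tT)=T\cdot\rv(\mu_{tT})=\int(T\cdot\Phi)\,d\mu_{tT}=a$, since $\mu_{tT}\in\cM$ (the equilibrium measure exists and is unique by property (4) of \STP). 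Hence $g$ is affine and $g''\equiv 0$, i.e. $\frac{d^2}{dt^2}P_{\rm top}(\phi+t\psi)=0$ with $\phi=0$ and $\psi=T\cdot\Phi\in C^\alpha(X,\bR)$. The equality case in \eqref{gg33} (property (6) of \STP) then forces $\psi=T\cdot\Phi$ to be cohomologous to a constant; since $T\neq 0$ this is a non-trivial linear combination of $\Phi$, which establishes the claim.

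The argument is short and I do not anticipate a genuine obstacle. The two points needing a little care are the convex-geometry reduction (empty interior $\Leftrightarrow$ contained in a hyperplane) and selecting the correct ingredient from \STP: it is the strict-convexity clause \eqref{gg33}, applied along a line through the origin in $T$-space, rather than the uniqueness-of-equilibrium-state clause or property (5), that does the work. An alternative for the last step is to quote the corresponding statement of Jenkinson \cite{Je}, but the route through $Q$ and \eqref{gg33} is self-contained under the standing hypotheses of this section.
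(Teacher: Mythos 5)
Your proof is correct, but it takes a genuinely different route from the paper's. You reduce $\mathrm{int}\,\R(\Phi)=\emptyset$ to the statement that $\R(\Phi)$ lies in an affine hyperplane, i.e.\ that $\int (T\cdot\Phi)\,d\mu$ is constant over $\cM$ for some $T\neq 0$, and then close the loop with the equality case of \eqref{gg33} applied to $g(t)=Q(tT)$: since $g'(t)=T\cdot\rv(\mu_{tT})\equiv a$, the second derivative vanishes and $T\cdot\Phi$ must be cohomologous to a constant. The paper instead invokes Jenkinson: for (i)$\Rightarrow$(ii) it notes that \eqref{gg33} makes $Q$ strictly convex and cites \cite[Corollary 3]{Je} to get the stronger identity $\{\rv(\mu_T):T\in\bR^m\}=\mathrm{int}\,\R(\Phi)$, and for $\neg$(i)$\Rightarrow\neg$(ii) it uses the density statement $\overline{\{\rv(\mu_T)\}}=\R(\Phi)$ from \cite[Theorem 1]{Je}. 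Your argument is more self-contained (it uses only the \STP\ axioms plus elementary convex geometry, and your $\neg$(i)$\Rightarrow\neg$(ii) direction is just integration of a coboundary, needing no density result); what the paper's route buys is precisely the extra output recorded as Corollary \ref{corintemp}, namely that the equilibrium rotation vectors fill out all of $\mathrm{int}\,\R(\Phi)$, which is used later for approximation by ergodic measures and for the analyticity theorem. Your proof establishes the proposition itself but would not by itself deliver that corollary.
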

\begin{proof}
Assume that non-trivial linear combination of $\Phi$ is cohomologous to a constant. It follows from \eqref{gg33}  that $Q$ is a strictly convex function in $\bR^m$.
Therefore, \cite[Corollary 3]{Je} implies that $\{\rv(\mu_T): T\in \bR^m\}={\rm int}\  \R(\Phi)$ and thus {\rm (ii)} holds.\\
Next, we assume that there exists a nontrivial linear combination of $\Phi$ that is cohomologous to a constant. It is easy to see that in this case $\{\rv(\mu_T): T\in \bR^m\}$ must be contained in some
lower-dimensional affine subspace of $\bR^m$. Since $\overline{\{\rv(\mu_T): T\in \bR^m\}}= \R(\Phi)$ (see \cite[Theorem 1]{Je}), we conclude that $\R(\Phi)$ has empty interior.
\end{proof}
A  side product of the proof of Proposition \ref{proptri} is the following.

\begin{corollary}\label{corintemp}
If ${\rm int}\ \R(\Phi)\not=\emptyset$ then
$\{\rv(\mu_T):\ T\in \bR^m\} = {\rm int}\  \R(\Phi)$.
In particular, $H(w)$ is well approximated by ergodic measures for all $w\in \R(\Phi)$.
\end{corollary}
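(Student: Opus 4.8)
The plan is to read off the equality $\{\rv(\mu_T):T\in\bR^m\}={\rm int}\ \R(\Phi)$ directly from the proof of Proposition \ref{proptri}, and then to bootstrap it — via the variational principle and the already-established continuity of $w\mapsto H(w)$ — to the statement about approximation by ergodic measures. First I would observe that if ${\rm int}\ \R(\Phi)\neq\emptyset$, then by Proposition \ref{proptri} no non-trivial linear combination of $\Phi$ is cohomologous to a constant; hence \eqref{gg33} shows that $Q$ is strictly convex on $\bR^m$, and \cite[Corollary 3]{Je} gives $\{\rv(\mu_T):T\in\bR^m\}={\rm int}\ \R(\Phi)$. This is the first assertion, obtained with no new work.

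Next I would show that for $w\in{\rm int}\ \R(\Phi)$ the supremum defining $H(w)$ is attained at an ergodic measure. Choose $T\in\bR^m$ with $\rv(\mu_T)=w$; by property 4 of \STP the measure $\mu_T$ is ergodic. For any $\nu\in\cM$ with $\rv(\nu)=w$, applying the variational principle \eqref{eqvarpri} to the H\"older potential $T\cdot\Phi$ yields
\begin{equation*}
h_\nu(f)+T\cdot w=h_\nu(f)+\int T\cdot\Phi\,d\nu\le P_{\rm top}(T\cdot\Phi)=h_{\mu_T}(f)+\int T\cdot\Phi\,d\mu_T=h_{\mu_T}(f)+T\cdot w,
\end{equation*}
so $h_\nu(f)\le h_{\mu_T}(f)$, and therefore $H(w)=h_{\mu_T}(f)$, attained by the ergodic measure $\mu_T$. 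In particular $H(w)$ is approximated by ergodic measures (take the constant sequence $\mu_T$) for every interior point $w$.

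Finally, to handle an arbitrary $w\in\R(\Phi)$, I would use that $\R(\Phi)$ is compact and convex with non-empty interior, so that $\overline{{\rm int}\ \R(\Phi)}=\R(\Phi)$; pick $w_n\in{\rm int}\ \R(\Phi)$ with $w_n\to w$ and, for each $n$, some $T_n$ with $\rv(\mu_{T_n})=w_n$. Then $(\mu_{T_n})_{n\in\bN}\subset\cM_E$, $\rv(\mu_{T_n})=w_n\to w$, and $h_{\mu_{T_n}}(f)=H(w_n)\to H(w)$, the last step using that $w\mapsto H(w)$ is continuous under \STP (a consequence of the upper semi-continuity of the entropy map, property 2). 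This produces the required sequence and finishes the proof. I do not expect a genuine obstacle here: the only input beyond formal manipulation is the continuity of $H$, which has already been recorded in the text, and everything else is a direct reuse of Proposition \ref{proptri} together with the equality-case analysis of the variational principle.
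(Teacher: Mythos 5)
Your proposal is correct and follows essentially the route the paper intends: the first assertion is read off verbatim from the proof of Proposition \ref{proptri}, and the ``in particular'' clause is exactly the paper's implicit reasoning (the equality case of the variational principle shows $H(w)=h_{\mu_T}(f)$ for interior $w$, as the paper itself uses later, and continuity of $H$ under \STP{} extends the approximation to boundary points). The only difference is that the paper states the corollary as an immediate side product without writing out these details, which you have correctly supplied.
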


Next we show that unless $f$ has zero topological entropy the map $w\mapsto H(w)$ is strictly positive on the interior of the rotation set.

\begin{theorem}
Let $f:X\to X$ be a continuous map on a compact metric space satisfying \STP and let $\Phi=(\phi_1,\cdots,\phi_m):X\to\bR^m$ be  H\"older continuous. Then one and only
one of the following conditions holds.
\begin{enumerate}
\item[(i)]
$h_{\rm top}(f)=0$.
\item[(ii)]
 $H(w)> 0$ for all $w\in {\rm int}\  \R(\Phi)$.
\end{enumerate}
\end{theorem}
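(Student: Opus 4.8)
The content of the theorem is the implication $h_{\rm top}(f)>0\Rightarrow H(w)>0$ for all $w\in{\rm int}\,\R(\Phi)$, together with the (easy) observation that the two conditions exclude each other. Indeed, if $h_{\rm top}(f)=0$ then the variational principle \eqref{eqvarprient} gives $h_\mu(f)=0$ for every $\mu\in\cM$, hence $H\equiv 0$ on $\R(\Phi)$, so (ii) fails whenever ${\rm int}\,\R(\Phi)\neq\emptyset$ (and if ${\rm int}\,\R(\Phi)=\emptyset$ there is nothing to prove); conversely $h_{\rm top}(f)>0$ obviously rules out (i). So assume from now on $h_{\rm top}(f)>0$ and fix $w\in{\rm int}\,\R(\Phi)$.

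By Corollary \ref{corintemp} there exists $T\in\bR^m$ with $\rv(\mu_T)=w$, where $\mu_T$ is the unique equilibrium state of $T\cdot\Phi$ provided by \STP{}. Since $\rv(\mu_T)=w$, we have $H(w)\geq h_{\mu_T}(f)=h(T)$, so it is enough to prove that $h(T)>0$ for every $T\in\bR^m$.

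To this end I would look at the restriction of the pressure to the line $\bR T$: put $g(s)=Q(sT)=P_{\rm top}(sT\cdot\Phi)$, $s\in\bR$. As $Q$ is real-analytic on $\bR^m$ (property 3 of \STP{}), $g$ is real-analytic, and \eqref{gg33} applied with $\psi=T\cdot\Phi\in C^\alpha(X,\bR)$ shows $g''(s)\geq 0$. Using \eqref{eqvarT} with $sT$ in place of $T$, together with \eqref{eqw1} and the chain rule $g'(s)=DQ(sT)\cdot T$, one obtains the identity $h(sT)=g(s)-s\,g'(s)$; differentiating yields $\frac{d}{ds}h(sT)=-s\,g''(s)\leq 0$ for $s\geq 0$, so $s\mapsto h(sT)$ is non-increasing on $[0,\infty)$. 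Now $h(0)=h_{\mu_0}(f)=h_{\rm top}(f)>0$, since $\mu_0$ is the measure of maximal entropy, while $h(sT)\geq 0$ for all $s$. If $h(T)$ vanished, monotonicity and nonnegativity would force $h(sT)=0$ for all $s\geq 1$; but $s\mapsto h(sT)$ is real-analytic on $\bR$ (as observed above, $T\mapsto h(T)$ is real-analytic), so it would vanish identically, contradicting $h(0)>0$. Hence $h(T)>0$, and therefore $H(w)\geq h(T)>0$.

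The one genuine obstacle is this last step — ruling out that the equilibrium measure $\mu_T$ has zero entropy. Convexity of the pressure only tells us that $s\mapsto h(sT)$ attains its maximum at $s=0$ and decays monotonically away from it; excluding that it reaches $0$ at a finite parameter really uses the real-analyticity of $h$ (hence of the pressure), which is one of the defining features of \STP{}. The remaining ingredients — differentiability of $g$, the formula $g'(s)=DQ(sT)\cdot T$, and $g''\geq 0$ — are immediate from \STP{}, since $sT\cdot\Phi\in C^\alpha(X,\bR)$ for every $s$.
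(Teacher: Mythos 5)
Your proof is correct, but the mechanism you use for the key step --- showing that the equilibrium measure $\mu_T$ with $\rv(\mu_T)=w$ has positive entropy --- is genuinely different from the paper's. The paper argues directly with the variational principle: for $T_0\neq 0$ it picks a second interior point $w_1$ with $T_0\cdot w_1>T_0\cdot w_0$ and a parameter $T_1$ with $\rv(\mu_{T_1})=w_1$, and then the inequality $P_{\rm top}(T_0\cdot\Phi)=h_{\mu_{T_0}}(f)+T_0\cdot w_0\geq h_{\mu_{T_1}}(f)+T_0\cdot w_1$ forces $h_{\mu_{T_0}}(f)>h_{\mu_{T_1}}(f)\geq 0$ in one line, using nothing beyond the variational principle and the surjectivity of $T\mapsto\rv(\mu_T)$ onto ${\rm int}\,\R(\Phi)$ (Corollary \ref{corintemp}). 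You instead restrict the pressure to the ray $s\mapsto sT$, derive $h(sT)=g(s)-sg'(s)$ and $\frac{d}{ds}h(sT)=-sg''(s)\leq 0$, and then invoke the identity theorem for the real-analytic function $s\mapsto h(sT)$ to rule out $h(T)=0$; this is valid under \STP{} but, as you yourself note, leans essentially on real-analyticity of the pressure (property 3), whereas the paper's comparison argument needs only convex-duality-type reasoning and would survive in settings where the pressure is merely differentiable. Your handling of the dichotomy ($h_{\rm top}(f)=0\Rightarrow H\equiv 0$ via the variational principle) matches the paper's implicit treatment, and both proofs share the same reduction $H(w)\geq h_{\mu_T}(f)$ via Corollary \ref{corintemp}.
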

\begin{proof}
Assume $h_{\rm top}(f)>0$.
If ${\rm int}\ \R(\Phi)=\emptyset$ then (ii) trivially holds. From now on we assume that ${\rm int}\ \R(\Phi)\not=\emptyset$ and thus, by  Proposition \ref{proptri} no nontrivial linear combination of $\Phi$ is cohomologous to a constant.
Let $w_0\in {\rm int}\ \R(\Phi)$. As a consequence of Corollary \ref{corintemp} there exists $T_0\in \bR^m$ such that $\rv(\mu_{T_0})=w_0$. By definition,
$\mu_{T_0}$ is the unique equilibrium measure of $T_0\cdot \Phi$ which implies that $H(w_0)=h(T_0)=h_{\mu_{T_0}}(f)$.

If $T_0=0$, then $\mu_{T_0}$ is the unique measure of maximal entropy and we obtain $H(w_0)=h_{\rm top}(f)>0$.
Otherwise, there exists $w_1\in {\rm int }\ \R(\Phi)$ with
\begin{equation} \label{fgh}
T_0 \cdot w_1 > T_0\cdot w_0.
\end{equation}
It follows  from Corollary \ref{corintemp} that there exists $T_1\in \bR^d$ such that $\rv(\mu_{T_1})=w_1$.
Applying the variational principle and the fact that $\mu_{T_i}$ is the unique equilibrium measure of the potential $T_i\cdot \Phi$ yields
\begin{equation}\label{eqBS}
P_{\rm top}(T_0\cdot \Phi)=  h_{\mu_{T_0}}(f)  +T_0\cdot \rv(\mu_{T_0}) \geq  h_{\mu_{T_1}}(f)+T_0\cdot \rv(\mu_{T_1}).
\end{equation}
Since $T_0\cdot \rv(\mu_{T_i})= T_0\cdot w_i$, \eqref{fgh} and \eqref{eqBS} imply  $H(w_0)=h_{\mu_{T_0}}(f)> h_{\mu_{T_1}}(f)\geq 0$ and we are done.

\end{proof}

Finally we  present the main result of this section.

\begin{theorem}\label{thanalytic}
Let $f:X\to X$ be a continuous map on a compact metric space satisfying \STP and let $\Phi=(\phi_1,\cdots,\phi_m):X\to\bR^m$ be  H\"older continuous. Then $w\mapsto H(w)$ is real-analytic
on $ {\rm int}\  \R(\Phi)$.
\end{theorem}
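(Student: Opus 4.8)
The plan is to realize $w \mapsto H(w)$ on ${\rm int}\,\R(\Phi)$ as a composition of two real-analytic maps. On the analytic side we already have, from the discussion preceding Proposition \ref{proptri}, that $Q(T) = P_{\rm top}(T\cdot\Phi)$ is real-analytic on $\bR^m$ (property 3 of \STP), that $DQ(T) = \rv(\mu_T)$ by \eqref{eqw1}, and hence that $h(T) = h_{\mu_T}(f) = Q(T) - T\cdot DQ(T)$ is real-analytic on $\bR^m$ by \eqref{eqvarT}. First I would dispose of the trivial case ${\rm int}\,\R(\Phi)=\emptyset$ and otherwise record, via Proposition \ref{proptri}, that no nontrivial linear combination of $\Phi$ is cohomologous to a constant.

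The first key point is the identity $H(DQ(T)) = h(T)$ for every $T \in \bR^m$. To see it, take any $\mu\in\cM$ with $\rv(\mu) = DQ(T)$; the variational principle together with the fact that $\mu_T$ is the unique equilibrium state of $T\cdot\Phi$ gives $h_\mu(f) + T\cdot\rv(\mu) \le P_{\rm top}(T\cdot\Phi) = h(T) + T\cdot DQ(T)$, whence $h_\mu(f)\le h(T)$; since $\mu_T$ itself has rotation vector $DQ(T)$, this yields $H(DQ(T)) = h_{\mu_T}(f) = h(T)$ (compare the results of Jenkinson in \cite{Je}).

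The second key point is that $DQ$ is a real-analytic diffeomorphism from $\bR^m$ onto ${\rm int}\,\R(\Phi)$. For any $v\in\bR^m\setminus\{0\}$ the function $v\cdot\Phi$ is not cohomologous to a constant, so \eqref{gg33} gives $v^{\top}D^2Q(T)v = \frac{d^2}{dt^2}P_{\rm top}\bigl(f,\,T\cdot\Phi + t(v\cdot\Phi)\bigr)\big|_{t=0} > 0$ for every $T$; thus the Hessian $D^2Q(T)$ is positive definite everywhere, so $Q$ is strictly convex, and $DQ$ is injective with everywhere-nonsingular derivative. Surjectivity of $DQ$ onto ${\rm int}\,\R(\Phi)$ is exactly Corollary \ref{corintemp}. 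The analytic inverse function theorem then shows $\Psi := (DQ)^{-1}$ is real-analytic on ${\rm int}\,\R(\Phi)$, and combining this with the first point gives $H = h\circ\Psi$ on ${\rm int}\,\R(\Phi)$, which is therefore real-analytic.

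I expect the main obstacle to be the identity $H(DQ(T)) = h(T)$, i.e. verifying that the equilibrium measure of $T\cdot\Phi$ attains the supremum in the definition \eqref{defH} of $H$ at its own rotation vector; everything else is supplied essentially ready-made by \STP (analyticity from property 3, strict convexity of $Q$ from \eqref{gg33} via Proposition \ref{proptri}) and by Corollary \ref{corintemp} (surjectivity of $DQ$). It is also worth keeping in mind that the positive-definiteness of $D^2Q$, and hence the invertibility of $DQ$, genuinely relies on the hypothesis ${\rm int}\,\R(\Phi)\neq\emptyset$ through Proposition \ref{proptri}; without it the statement is vacuous.
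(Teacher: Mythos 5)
Your proof is correct and follows essentially the same route as the paper: both realize $H$ as $h\circ I^{-1}$, where $I(T)=\rv(\mu_T)=DQ(T)$, and show $I$ is a real-analytic diffeomorphism onto ${\rm int}\ \R(\Phi)$ via positive definiteness of the Hessian of $Q$ (from \eqref{gg33} together with Proposition \ref{proptri}), surjectivity from Corollary \ref{corintemp}, and the inverse function theorem. The only cosmetic differences are that you obtain injectivity of $DQ$ from strict convexity of $Q$ whereas the paper deduces it from uniqueness of equilibrium states and the cohomology condition, and that you spell out the identity $H(\rv(\mu_T))=h(T)$, which the paper uses here but justifies only in the proof of the preceding theorem.
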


\begin{proof}
If $ {\rm int}\  \R(\Phi)$ is empty there is nothing to prove. From now on we assume $ {\rm int}\  \R(\Phi)\not=\emptyset$.
Without loss of generality we only consider the case $m=2$ (i.e. $\Phi=(\phi_1,\phi_2)$) and leave the general case to the reader.

Recall that
\begin{equation}
I:\bR^2\to   {\rm int}\  \R(\Phi),\ \ \ T=(t_1,t_2)\mapsto \rv(\mu_T)
\end{equation}
is a real-analytic surjective map. We will show that $I$ is a $C^\omega$-diffeomorphism.
Let $T, S \in \bR^2$ with $\rv(\mu_T)=\rv(\mu_S)$.
Combining that $\mu_{T}$ and $\mu_{S}$ are the unique equilibrium measures of the potentials $T\cdot \Phi$
and $S\cdot \Phi$ respectively, with \eqref{eqvarT} implies $h(T)=h(S)$. We conclude that $\mu_{T}=\mu_{S}$ which implies that
$T\cdot \Phi- S\cdot \Phi$ is cohomologous to a constant. Therefore, $T=S$ and we have shown that $I$ is a bijection. \\
\noindent
Finally, we have to prove that $I$ is a local $C^\omega$-diffeomorphism.
Equations \eqref{gg33} and \eqref{eqw1} combined with the fact that neither $\phi_1$ nor $\phi_2$ are cohomologous to a constant  imply
that
\begin{equation}
\partial_{t_1}^2 Q>0\quad {\rm
 and } \quad \partial_{t_2}^2 Q>0.
\end{equation}
Consider now the bilinear form
\[
A(\varphi_1,\varphi_2)=\partial_{\tau_1}\partial_{\tau_2}
P_{\rm top}(t_1\phi_1+t_2\phi_2+\tau_1\varphi_1+\tau_2\varphi_2)\rvert_{\tau_1=\tau_2=0}.
\]
Then $A(v\phi_1+w\phi_2,v\phi_1+w\phi_2)$ coincides with
\[
\begin{pmatrix}v&w\end{pmatrix}
\begin{pmatrix}A(\phi_1,\phi_2)&A(\phi_1,\phi_2)\\
A(\phi_1,\phi_1)&A(\phi_2,\phi_2)\end{pmatrix}
\begin{pmatrix}v\\w\end{pmatrix}
=
\begin{pmatrix}v&w\end{pmatrix}
B
\begin{pmatrix}v\\w\end{pmatrix},
\]
where
\[
B=\begin{pmatrix} \partial^2_{t_1}Q&
\partial_{t_1}\partial_{t_2}Q\\
\partial_{t_1}\partial_{t_2}Q& \partial^2_{t_2}Q\end{pmatrix}.
\]
Since no nontrivial linear combination of $\phi_1$ and $\phi_1$ is
cohomologous to a constant, if $(v,w)\ne0$ then
$A(v\phi_1+w\phi_2,v\phi_1+w\phi_2)>0$ (see \cite{Ru}) and hence
$B$ is positive definite. In particular $\det B$  is positive.
Using that $B$ is the derivative $DI$ of $I$, implies that ${\rm det} DI(t_1,t_2)>0$ for all $(t_1,t_2)\in \bR^2$. It now follows from the inverse function  theorem  that $I$ is a $C^\omega$-diffeomorphism.
Since $T\mapsto h(T)$ is real-analytic we conclude   that
\[
w\mapsto H(w)=  h\circ I^{-1}(w)
\]
is real-analytic in ${\rm int}\  \R(\Phi)$.

\end{proof}

We say a sequence of compact sets $(E_n)_{n\in \bN}\subset \bR^m$ is a real-analytic exhaustion of $\bR^m$ if $E_n\subset {\rm int }\ E_{n+1}$ for all $n\in \bN$, $\bigcup_{n\in\bN} E_n = \bR^m$ and each $\partial E_n$ is a real-analytic $m-1$-dimensional submanifold of $\bR^m$. The following is a direct consequence of the proof of Theorem \ref{thanalytic}.

\begin{corollary}\label{corende}
Let $f:X\to X$ be a continuous map on a compact metric space satisfying \STP and let $\Phi=(\phi_1,\cdots,\phi_m):X\to \bR^m$ be  H\"older continuous with ${\rm int }\ \R(\Phi)\not=\emptyset. $
Let $(E_n)_{n\in \bN}\subset \bR^m$ is a real-analytic exhaustion of $\bR^m$. Then $C_n=\{\rv(\mu_T): T\in \partial E_n\}\subset {\rm int }\ \R(\Phi)$ is a sequence of $m-1$-dimensional
real-analytic submanifolds that converges to $\partial \R(\Phi)$ in the Hausdorff metric. Moreover, $w\mapsto H(w)$ varies real-analytically on $C_n$ for all $n\in \bN$.
\end{corollary}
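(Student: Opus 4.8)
The plan is to read off everything from the real-analytic diffeomorphism produced in the proof of Theorem \ref{thanalytic}. Write $W={\rm int}\ \R(\Phi)$, which is nonempty (by hypothesis), bounded (since $\R(\Phi)$ is compact) and convex. By the proof of Theorem \ref{thanalytic} (carried out there for $m=2$, the general $m$ being entirely analogous), the map
\[
I:\bR^m\to W,\qquad T\mapsto \rv(\mu_T),
\]
is a real-analytic diffeomorphism onto $W$, and $H=h\circ I^{-1}$ on $W$, where $h(T)=h_{\mu_T}(f)$ is real-analytic on $\bR^m$. Since $\partial E_n$ is by hypothesis a real-analytic $(m-1)$-dimensional submanifold of $\bR^m$ and $I$ is a $C^\omega$-diffeomorphism onto the open set $W$, its image $C_n=I(\partial E_n)$ is a real-analytic $(m-1)$-dimensional submanifold of $\bR^m$ contained in $W$; this settles the structural assertions. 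Moreover $H$ is real-analytic on the open set $W\supset C_n$, so its restriction to the real-analytic submanifold $C_n$ is real-analytic. It thus remains to establish the Hausdorff convergence $C_n\to\partial\R(\Phi)$.

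Next I would translate the exhaustion hypothesis on $(E_n)$ through $I$. Put $K_n=I(E_n)$. Because $I$ is a homeomorphism of $\bR^m$ onto the open subset $W$, it is an open embedding $\bR^m\hookrightarrow\bR^m$; consequently each $K_n$ is compact and closed in $\bR^m$ with $K_n\subset W$, one has $\partial K_n=I(\partial E_n)=C_n$ with $C_n\subset K_n\subset W$, one has $K_n\subset{\rm int}\ K_{n+1}$ since $E_n\subset{\rm int}\ E_{n+1}$, and $\bigcup_nK_n=I(\bR^m)=W$. Thus $(K_n)$ is a compact exhaustion of the bounded open convex set $W$ with $\partial K_n=C_n$, and the claim becomes an elementary fact about such exhaustions.

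For the first of the two Hausdorff inclusions, fix $\e>0$. The set $\{x\in W:d(x,\partial\R(\Phi))\ge\e\}$ equals $\{x\in\R(\Phi):d(x,\partial\R(\Phi))\ge\e\}$ (a point of $\R(\Phi)$ at positive distance from the boundary is interior), hence is compact; being contained in $W=\bigcup_n{\rm int}\ K_n$ it lies in ${\rm int}\ K_N$ for some $N$, so for all $n\ge N$ the set $C_n=\partial K_n\subset K_n\subset W$ is disjoint from it, i.e.\ $\sup_{x\in C_n}d(x,\partial\R(\Phi))<\e$. For the reverse inclusion, fix $c\in W$; for $\delta\in(0,1)$ the map $y\mapsto y'=(1-\delta)y+\delta c$ sends $\R(\Phi)$ into $W$ by convexity, so the closure $Y'$ of the image of $\partial\R(\Phi)$ under it is compact and contained in some ${\rm int}\ K_N$. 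For $n\ge N$ and $y\in\partial\R(\Phi)$ we have $y'\in K_n$ while $y\notin K_n$; since $K_n$ is closed the segment $[y',y]$ leaves $K_n$ at a point of $\partial K_n=C_n$ lying at distance at most $|y'-y|\le\delta\,\diam\R(\Phi)$ from $y$. Choosing $\delta$ with $\delta\,\diam\R(\Phi)<\e$ gives $\sup_{y\in\partial\R(\Phi)}d(y,C_n)<\e$ for all $n\ge N$. The two estimates together give $C_n\to\partial\R(\Phi)$ in the Hausdorff metric.

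I expect the only delicate point to be this second Hausdorff estimate: one must produce, uniformly in the boundary point $y$, a nearby point of $C_n$, and the mechanism is to push $\partial\R(\Phi)$ slightly into $W$ using convexity so that the entire pushed-in set lands inside a single $K_n$, and then to cross $C_n=\partial K_n$ along the segment joining the pushed-in point back to $y$. Everything else is bookkeeping resting on the diffeomorphism property of $I$ from Theorem \ref{thanalytic}.
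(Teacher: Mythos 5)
Your proof is correct and follows the route the paper intends: the paper offers no separate argument for Corollary~\ref{corende}, stating only that it is a direct consequence of the proof of Theorem~\ref{thanalytic}, and your use of the real-analytic diffeomorphism $I:T\mapsto \rv(\mu_T)$ of $\bR^m$ onto ${\rm int}\ \R(\Phi)$ together with $H=h\circ I^{-1}$ is exactly that consequence. The compact-exhaustion and convexity/segment argument you supply for the Hausdorff convergence of $C_n=\partial\bigl(I(E_n)\bigr)$ to $\partial \R(\Phi)$ correctly fills in the one detail the paper leaves implicit.
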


\end{document}